\let\pa\partial  
\let\na\nabla  
\let\eps\varepsilon
\newcommand{\R}{{\mathbb R}} 
\newcommand{\diver}{\operatorname{div}}
\newcommand{\ran}{\operatorname{ran}}
\newtheorem{theorem}{Theorem}
\newtheorem{lemma}[theorem]{Lemma}
\newtheorem{remark}[theorem]{Remark}
\newtheorem{corollary}[theorem]{Corollary}
\begin{document}  

\title[Weak-strong uniqueness Maxwell-Stefan]{Weak-strong uniqueness for
Maxwell--Stefan systems}

\author[X. Huo]{Xiaokai Huo}
\address{Institute for Analysis and Scientific Computing, Vienna University of  
	Technology, Wiedner Hauptstra\ss e 8--10, 1040 Wien, Austria}
\email{xiaokai.huo@tuwien.ac.at} 

\author[A. J\"ungel]{Ansgar J\"ungel}
\address{Institute for Analysis and Scientific Computing, Vienna University of  
	Technology, Wiedner Hauptstra\ss e 8--10, 1040 Wien, Austria}
\email{juengel@tuwien.ac.at} 

\author[A. Tzavaras]{Athanasios E. Tzavaras}
\address{Computer, Electrical and Mathematical Science and Engineering Division,
King Abdullah University of Science and Technology (KAUST),
Thuwal 23955-6900, Saudi Arabia}
\email{athanasios.tzavaras@kaust.edu.sa}

\date{\today}

\thanks{The second author has been partially supported by the Austrian Science Fund 
(FWF), grants P30000, P33010, F65, and W1245. 
This work received funding from the European 
Research Council (ERC) under the European Union's Horizon 2020 research and 
innovation programme, ERC Advanced Grant NEUROMORPH, no.~101018153.} 

\begin{abstract}
The weak-strong uniqueness for Maxwell--Stefan systems and 
some generalized systems is proved. 
The corresponding parabolic cross-diffusion equations are considered in a bounded
domain with no-flux boundary conditions. 
The key points of the proofs are various inequalities for the relative entropy
associated to the systems and the analysis of the spectrum of a quadratic form
capturing the frictional dissipation. The latter task is complicated by the
singular nature of the diffusion matrix. This difficulty is addressed by
proving its positive definiteness on a subspace and using the Bott--Duffin
matrix inverse. The generalized Maxwell--Stefan systems are shown to cover
several known cross-diffusion systems for the description of tumor growth and
physical vapor deposition processes.
\end{abstract}

\keywords{Cross diffusion, weak-strong uniqueness, relative entropy, Maxwell--Stefan
systems, thin-film solar cell model, tumor-growth model.}  
 
\subjclass[2000]{35A02, 35K51, 35K55, 35Q35.}

\maketitle


\section{Introduction}

The Maxwell--Stefan equations describe the diffusive transport of the components
of gaseous mixtures. Applications arise in, e.g., sedimentation, dialysis, 
electrolysis, and ion exchange \cite{WeKr00}. 
They were suggested in 1866 by James Maxwell \cite{Max66} for dilute gases
and in 1871 by Josef Stefan \cite{Ste71} for fluids. 
While there are several works on the existence of 
local-in-time smooth solutions \cite{Bot11,GoMa98,HMPW17} 
and global-in-time weak solutions \cite{JuSt13} in the case of vanishing barycentric
velocity, the problem of the uniqueness of solutions is basically unsolved.
The uniqueness of strong solutions has been shown in \cite{HMPW17,HuSa18},
and uniqueness results for weak solutions in a very special case can be found
in \cite{ChJu18}. In this paper, we make a step forward in the uniqueness problem
by showing that strong solutions are unique in the class of weak solutions
to Maxwell--Stefan systems.

\subsection{Setting}

We consider an ideal gaseous mixture consisting of $n$ components with
volume fractions or concentrations $c_i(x,t)$, $i=1,\ldots,n$. The dynamics
of the mixture is given by the mass balance equations and
the relations between the driving forces and the fluxes,
\begin{equation}\label{1.eq}
  \pa_t c_i + \diver(c_iu_i) = 0, \quad \na c_i = -\sum_{j=1}^n
	\frac{c_ic_j}{D_{ij}}(u_i-u_j), \quad
	i=1,\ldots,n,
\end{equation}
where $u_i(x,t)$ are the partial velocities and $D_{ij}=D_{ji}>0$ are 
diffusion coefficients. 
The equations are solved in a bounded domain $\Omega\subset\R^d$ ($d\ge 1$), 
supplemented by the initial and no-flux boundary conditions
\begin{equation}\label{1.bic}
  c_i(0) = c_i^0\quad\mbox{in }\Omega, \quad
	\na c_i\cdot\nu = 0\quad\mbox{on }\pa\Omega,\ t>0,\ i=1,\ldots,n,
\end{equation}
where $\nu$ is the exterior unit normal vector to $\pa\Omega$. 

We assume that the barycentric velocity vanishes, which implies
that the sum of all fluxes vanishes, $\sum_{i=1}^n c_iu_i=0$. Then, supposing 
that $c_i^0\ge 0$ and $\sum_{i=1}^n c_i^0=1$ in $\Omega$, we deduce from
mass conservation that
\begin{equation*}
\sum_{i=1}^n c_i = 1 \quad \mbox{in $\Omega$ for all $t>0$.}
\end{equation*}
This constraint is
necessary to invert the force-flux relations in \eqref{1.eq}, i.e.\ to express
the flux $c_iu_i$ as a linear combination of the driving forces $\na c_j$.

The global existence analysis for \eqref{1.eq}--\eqref{1.bic} is based on the
property that the system is endowed with the entropy functional
\begin{equation}\label{1.ent}
  H(\bm{c}) = \sum_{i=1}^n\int_\Omega c_i(\log c_i-1)dx,
\end{equation}
where $\bm{c}=(c_1,\ldots,c_n)$ solves \eqref{1.eq}--\eqref{1.bic} and satisfies
the entropy dissipation inequality
\cite[(1.14)]{JuSt13}
\begin{equation}\label{1.ei}
  \frac{dH}{dt}(\bm{c}) + C \sum_{i=1}^n \int_\Omega  |\na\sqrt{c_i}|^2dx \le 0,
\end{equation}
with $C>0$ depending only on $(D_{ij})$. The aim of this paper is to prove the
weak-strong uniqueness for \eqref{1.eq}--\eqref{1.bic} and generalized systems.
Weak-strong uniqueness means that any weak solution coincides with a strong 
solution emanating from the same initial data as long as the latter exists.
In other words, the strong solutions must be unique within the class of weak solutions.
To achieve this aim, we use ideas from our previous work \cite{HJT19} and
establish a relative entropy inequality. This leads to a stability estimate for
the difference of a weak and a strong solution and eventually to the
weak-strong uniqueness property. Here, the relative entropy functional is given by
\begin{equation}\label{1.relent}
  H(\bm{c}|\bar{\bm{c}}) = \sum_{i=1}^n\int_\Omega 
	\bigg(c_i\log\frac{c_i}{\bar{c}_i}-(c_i-\bar{c}_i)\bigg)dx,
\end{equation}
where $\bm{c}$ and $\bar{\bm{c}}$ are suitable solutions to \eqref{1.eq}--\eqref{1.bic}.

In the literature, relative entropies are known to be useful to prove the weak-strong
uniqueness of solutions. First results were achieved for systems of hyperbolic
conservation laws \cite{Daf79} and later for the compressible Navier--Stokes
equations \cite{FJN12,FeNo12} and general hyperbolic-parabolic systems endowed 
with an entropy \cite{CT18}. 
The relative entropy technique was applied to,
for instance, entropy-dissipating reaction-diffusion equations \cite{Fis17},
reaction-cross-diffusion systems \cite{ChJu19}, 
energy-reaction-diffusion systems \cite{Hop21}, 
nonlocal cross-diffusion systems \cite{JPZ21},
and quantum Euler systems \cite{BGL19,GLT17}.
Compared to the results of, e.g.\ \cite{ChJu19,Hop21}, the diffusion matrix
in these works is assumed to be positive definite 
if $c_i>0$ for all $i=1,\ldots,n$, which is
not satisfied for the Maxwell--Stefan system.

\subsection{Definitions and assumptions}

We impose the following assumptions:
\renewcommand{\labelenumi}{(A\theenumi)}
\begin{enumerate}
\item Domain: $\Omega\subset\R^d$ with $d\ge 1$ is a bounded domain.
\item Coefficients: $D_{ij}>0$ and $D_{ij}=D_{ji}$ for all $i,j=1,\ldots,n$, $i\neq j$.
\item Initial data: $0\le c_{i}^0\in L^1(\Omega)$ for $i=1,\ldots,n$, 
$H(\bm{c}^0)<\infty$, and $\sum_{i=1}^n c_{i}^0=1$ in $\Omega$.
\end{enumerate}
Next, we define the concept of weak and strong solutions  employed in this paper.

We call $\bm{c} = (c_1,\ldots,c_n)$ a {\em weak solution} to \eqref{1.eq}--\eqref{1.bic}
if $\bm{c}$ satisfies the initial condition \eqref{1.bic}, 
$c_i\ge 0$, $\sum_{i=1}^n c_i=1$ in $\Omega\times(0,\infty)$, 
$$
  \sqrt{c_i}\in L^2_{\rm loc}(0,\infty;H^1(\Omega)), \quad 
	c_i\in C_{\rm loc}^0([0,\infty);{\mathcal V}'), \quad i=1,\ldots,n,
$$
where ${\mathcal V}'$ is the dual space of ${\mathcal V}=\{w\in H^2(\Omega):
\na w\cdot\nu=0$ on $\pa\Omega\}$, and $\bm{c}$ 
satisfies \eqref{1.eq}--\eqref{1.bic} in the weak sense, i.e., for any
$\phi_i\in C_{\rm loc}^1([0,\infty);C^1(\overline\Omega))$ satisfying
$\na\phi_i\cdot\nu=0$ on $\pa\Omega$ and any $T>0$, $i=1,\ldots,n$, we have
\begin{equation*}
  \int_\Omega c_i(T)\phi_i(T)dx - \int_\Omega c_i^0\phi_i(0)dx
	- \int_0^T\int_\Omega c_i\pa_t\phi_i dxdt
	- \int_0^T\int_\Omega c_iu_i\cdot\na\phi_i dxdt = 0,
\end{equation*}
where $u_i$ satisfies the force-flux relations in \eqref{1.eq}.
The last integral is well defined, since the gradient bound for $\sqrt{c_i}$
implies that $\sqrt{c_i}u_i\in L_{\rm loc}^2(0,\infty;L^2(\Omega))$ (see Lemma
\ref{lem.cu2} below) and thus, because of the property $0\le c_i\le 1$,
$c_iu_i\in L_{\rm loc}^2(0,\infty;L^2(\Omega))$. 
Finally, a weak solution is required to satisfy
the entropy inequality
\begin{equation}\label{1.entropy}
  H(\bm{c}(t)) + \frac12 \sum_{i,j=1}^n \int_0^t\int_\Omega
	\frac{c_ic_j}{D_{ij}}|u_i-u_j|^2 dxds \le H(\bm{c}^0).
\end{equation}
For the Maxwell-Stefan system this is not an additional requirement 
as it is guaranteed by the existence theory of \cite{JuSt13};
see Section \ref{sec.ex.ms}.

We will use the term {\em strong solution} to \eqref{1.eq}--\eqref{1.bic}
to mean that $\bar{\bm{c}}=(\bar c_i,\ldots,\bar c_n)$ 
with $0 < \bar{c}_i < 1$ is a weak solution satisfying additional 
regularity properties. The necessary regularity is 
stated precisely in context. In certain cases, $\bar c_i$ satisfies 
\eqref{1.eq}--\eqref{1.bic} pointwise, 
as is the traditional notion of strong solutions. 

\subsection{Main results and key ideas of the proofs}

Our first main result is concerned with the Maxwell--Stefan system
\eqref{1.eq}--\eqref{1.bic}.

\begin{theorem}[Weak-strong uniqueness]\label{thm.ws}
Let Assumptions (A1)--(A2) hold. Let $\bm{c}$ be a weak solution to
\eqref{1.eq}--\eqref{1.bic} and let $\bar{\bm{c}}$ be a strong solution
to \eqref{1.eq}--\eqref{1.bic} satisfying $0<\bar{c}_i<1$ in $\Omega$, $t>0$,
the regularity properties
$$
  \log\bar{c}_i\in H_{\rm loc}^1(\Omega\times(0,\infty)), \quad
	\bar{u}_i\in L_{\rm loc}^\infty(\Omega\times(0,\infty)),
$$
and $\bar{c}_i$ does not have anomalous dissipation, i.e., it satisfies
the entropy identity
$$
  H(\bar{\bm{c}}(t)) + \frac12\int_0^t\int_\Omega\sum_{i,j=1}^n
	\frac{\bar{c}_i\bar{c}_j}{D_{ij}}|\bar{u}_i-\bar{u}_j|^2 dxds = H(\bar{\bm{c}}^0)
	\quad\mbox{for }t>0.
$$
The initial data for $\bm{c}$ and $\bar{\bm{c}}$ satisfy Assumption (A3).
Then for any $t>0$, there exists a constant $C(t)>0$, depending on $t$, $\Omega$,
$n$, and $(D_{ij})$, such that
\begin{equation}\label{1.rei}
  H(\bm{c}(t)|\bar{\bm{c}}(t)) + \sum_{i=1}^n\int_0^t\int_\Omega c_i|u_i-\bar{u}_i|^2
	dxds \le C(t)H(\bm{c}^0|\bar{\bm{c}}^0).
\end{equation}
If the initial data coincide, i.e.\ $\bm{c}^0=\bar{\bm{c}}^0$ in $\Omega$, then
$\bm{c}(t)=\bar{\bm{c}}(t)$ in $\Omega$ for $t>0$.
\end{theorem}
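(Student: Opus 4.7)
The plan is to carry out a relative entropy argument of Dafermos type, adapted to the singular Maxwell--Stefan dissipation. The starting point is the decomposition
\begin{equation*}
H(\bm c|\bar{\bm c}) = H(\bm c) - \sum_i \int_\Omega c_i \log \bar c_i\,dx + \sum_i \int_\Omega \bar c_i\,dx,
\end{equation*}
where the last piece is constant in $t$ by mass conservation. For the first term I would use the weak entropy inequality \eqref{1.entropy}; for the middle term I would test the weak formulation for $c_i$ against $\log \bar c_i$ (admissible because $\log \bar c_i \in H^1_{\rm loc}$ and $\sqrt{c_i} \in L^2_{\rm loc}(H^1)$) and combine this with the no-anomalous-dissipation identity for $\bar{\bm c}$. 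Substituting the force--flux relation for $\nabla c_i$ and $\nabla \bar c_i$ and symmetrizing in $i,j$, I expect to arrive at
\begin{equation*}
H(\bm c|\bar{\bm c})(t) + \tfrac12 \int_0^t\!\int_\Omega \sum_{i,j} \frac{c_i c_j}{D_{ij}} |(u_i-\bar u_i) - (u_j - \bar u_j)|^2 \,dx\,ds \le H(\bm c^0|\bar{\bm c}^0) + R(t),
\end{equation*}
where $R(t)$ collects cross terms of schematic form $(c_i c_j - \bar c_i \bar c_j)\,\bar u_i\cdot(\cdots)$, all carrying $\bar u_i$ in the bounded factor and the other factors being linear or quadratic in the deviations $c_i-\bar c_i$ and $u_i-\bar u_i$.

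The main difficulty is extracting a lower bound of the form $\sum_i \int c_i |u_i-\bar u_i|^2$ from the dissipation integral. The quadratic form $Q(\bm v) := \sum_{i,j} \frac{c_ic_j}{D_{ij}}|v_i-v_j|^2$ is degenerate, vanishing on the one-dimensional nullspace spanned by $\bm v\propto(1,\ldots,1)$. On the complementary hyperplane $\{\sum_i c_i v_i=0\}$ however, $Q$ is positive definite; quantitatively, via the Bott--Duffin pseudoinverse of the underlying Maxwell--Stefan matrix restricted to this subspace, one obtains a constant $\kappa=\kappa(D_{ij})>0$ with $Q(\bm v)\ge \kappa\sum_i c_i|v_i|^2$ for every $\bm v$ in that subspace. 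Applying this to $v_i := u_i-\bar u_i$ requires handling the mismatch of constraints: $\sum_i c_i u_i = 0$ and $\sum_i \bar c_i \bar u_i = 0$ give only $\sum_i c_i v_i = -\sum_i(c_i-\bar c_i)\bar u_i$, not zero. I would therefore decompose $v_i = w_i + \lambda$ with $\lambda := \sum_i c_i v_i$, so that $\sum_i c_i w_i=0$; then $Q(\bm v)=Q(\bm w) \ge \kappa\bigl(\sum_i c_i|v_i|^2 - |\lambda|^2\bigr)$, and the defect $|\lambda|^2 \le \|\bar u\|_{L^\infty}^2\bigl(\sum_i|c_i-\bar c_i|\bigr)^2$ is dominated by $H(\bm c|\bar{\bm c})$ via the Csisz\'ar--Kullback--Pinsker inequality.

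To close the estimate, each term in $R(t)$ is split as a product of a bounded factor ($\bar u_i\in L^\infty$, or $\nabla\log\bar c_i\in L^2$), a deviation, and possibly $v_j$. The pieces linear in $v_j$ are treated by Cauchy--Schwarz with a small weight and absorbed into half of the coercive term $\sum_i\int c_i|u_i-\bar u_i|^2$ on the left. The pieces purely in $c_i-\bar c_i$ are controlled by $H(\bm c|\bar{\bm c})$ using the pointwise bound $c\log(c/\bar c)-(c-\bar c) \gtrsim (c-\bar c)^2$, valid on $0\le c\le 1$, $0<\bar c<1$ with constant depending on $\inf\bar c$. Gronwall's lemma then yields \eqref{1.rei}, and $\bm c^0=\bar{\bm c}^0$ immediately gives uniqueness. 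The principal obstacle, as flagged in the abstract, is the coercivity step: establishing that $Q$ is positive definite on the constrained subspace with a constant depending only on $(D_{ij})$, and cleanly managing the Bott--Duffin inverse in the presence of the spatial-temporal dependence of $\bm c$ together with the barycentric-constraint mismatch.
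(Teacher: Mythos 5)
Your proposal follows the paper's argument in all its essential steps: you test the weak formulation against $\log\bar c_i$, arrive at the relative entropy inequality with the friction dissipation on the left and a linear remainder on the right, establish coercivity of the friction quadratic form on the barycentric-constrained subspace, correct for the constraint mismatch using $\bar u\in L^\infty$, and close by Gronwall. The shift $v_i = w_i + \lambda$ you propose to enforce $\sum_i c_i w_i = 0$ is just a reparametrization of the paper's projection identity $|P_L\bm Y|^2 = |\bm Y|^2 - |P_{L^\perp}\bm Y|^2$ with $Y_i = \sqrt{c_i}(u_i - \bar u_i)$, and the coercivity you attribute to the Bott--Duffin inverse is exactly what Lemma~\ref{lem.A} provides (proved there directly by Perron--Frobenius, with the Bott--Duffin construction reserved for the square-root formulation of Corollary~\ref{coro}).

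There is, however, one estimate that, as you state it, would not prove the theorem. You invoke the pointwise bound $c\log(c/\bar c)-(c-\bar c) \gtrsim (c-\bar c)^2$ \emph{with constant depending on $\inf\bar c$}. Theorem~\ref{thm.ws} assumes only $0 < \bar c_i < 1$ pointwise, with no uniform lower bound, so a constant degenerating with $\inf\bar c$ would leave Gronwall with a useless bound, and you would be forced into the stronger hypothesis $\bar c_i \ge m > 0$ that the theorem deliberately avoids. In fact the constant is uniformly $1/2$: writing $c\log(c/\bar c)-(c-\bar c)=(c-\bar c)^2\int_0^1\!\int_0^\theta (sc+(1-s)\bar c)^{-1}\,ds\,d\theta$, the volume-fraction constraint $0\le c,\bar c\le 1$ forces $sc+(1-s)\bar c\le 1$, so the integrand is $\ge 1$. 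This is Lemma~\ref{lem.ent}, and it is precisely what allows the theorem to hold with only pointwise positivity of $\bar{\bm c}$. (The Csisz\'ar--Kullback--Pinsker inequality you also mention would likewise give a universal constant and serve the same purpose; your two justifications are inconsistent with one another, and only the uniform version is adequate.)
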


We verify in Section \ref{sec.ex.ms} that solutions with the stated regularity exist.
To prove Theorem \ref{thm.ws}  we develop a relative entropy identity and use it
as a yardstick to control the distance between two solutions.
First, it is shown that the relative entropy \eqref{1.relent} satisfies the inequality 
\begin{align}\label{1.aux}
  \frac{dH}{dt}(\bm{c}|\bar{\bm{c}}) &+ \frac12\sum_{i,j=1,\,i\neq j}^n\int_\Omega
	\frac{c_ic_j}{D_{ij}}|(u_i-\bar{u}_i)-(u_j-\bar{u}_j)|^2 dx \\
	&\le  - \sum_{i,j=1,\,i\neq j}^n\int_\Omega\frac{c_i}{D_{ij}}(c_j-\bar{c}_j)
	(u_i-\bar{u}_i)\cdot(\bar{u}_i -\bar{u}_j)dx.
	\nonumber 
\end{align}
(see Section \ref{sec.relent}).
Next, we study how the frictional dissipation 
(the second term in \eqref{1.aux}) controls the $L^2$ norm of $u_i-\bar{u}_i$. 
The quadratic form in \eqref{1.aux} captures the dissipative effect of friction
in the following way:
\begin{align}
  \frac12\sum_{i,j=1,\,i\neq j}^n &
	\frac{c_ic_j}{D_{ij}}|(u_i-\bar{u}_i)-(u_j-\bar{u}_j)|^2
	= \sum_{i=1}^n c_i (u_i - \bar{u}_i) \cdot \sum_{j=1}^n \frac{1}{D_{i j}} 
	c_j \big((u_i-\bar{u}_i)-(u_j-\bar{u}_j)\big) \nonumber \\
	&= \sum_{i,j = 1}^n A_{ij}(c)(\sqrt{c_i} (u_i - \bar{u}_i)) \cdot 
	(\sqrt{c_j} (u_j - \bar{u}_j)) = \bm{Y}^T A(\bm{c})\bm{Y}, \label{1.fricdiss}
\end{align}
where the matrix 
$A(\bm{c})=(A_{ij}(\bm{c}))\in\R^{n\times n}$ is defined by
\begin{equation}\label{1.A}
  A_{ij}(\bm{c}) = \left\{\begin{array}{ll}
	\sum_{k=1,\,k\neq i}^n c_k/D_{ik} &\quad\mbox{if }i=j, \\
	-\sqrt{c_ic_j}/D_{ij} &\quad\mbox{if }i\neq j ,
	\end{array}\right.
\end{equation}
and $\bm{Y}=(Y_1,\ldots,Y_n)$ with $Y_i=\sqrt{c_i}(u_i-\bar{u}_i)$.
The matrix $A(\bm{c})$ is singular and thus not positive definite. However,
we can show that it is positive definite on the subspace
$L:=\{\bm{z}\in\R^n:\sqrt{\bm{c}}\cdot\bm{z}=0\}$ (here, $\sqrt{\bm{c}}$ is the
vector with components $\sqrt{c_i}$) and the quadratic form satisfies
$$
  \bm{Y}^T A(\bm{c})\bm{Y} \ge \mu|P_L\bm{Y}|^2,
$$
where $\mu>0$ is a uniform lower bound for the positive eigenvalues of $A(\bm{c})$
and $P_L$ is the projection on $L$. This inequality and a careful estimate
of the right-hand side of \eqref{1.aux} implies \eqref{1.rei} and the
weak-strong uniqueness property.

The $L^\infty$ bound on the partial velocities $\bar{u}_i$ in Theorem \ref{thm.ws}
can be avoided at the expense of assuming $\na\sqrt{c_i} \in L^\infty$ and 
$\bar{c}_i$ is uniformly bounded from below by a positive constant. 
The uniform lower bound is not needed in
Theorem \ref{thm.ws}, where only positivity is required.

\begin{corollary}\label{coro}
Let the assumptions of Theorem \ref{thm.ws} hold, replacing $\bar{u}_i \in L^\infty (\Omega\times(0,\infty))$ by $\nabla \sqrt{\bar{c}_i}\in L^\infty (\Omega\times(0,\infty))$, 
$i=1,\ldots,n$. 
Suppose additionally that there exists $m>0$ such that $\bar{c}_i(t)\ge m$ in
$\Omega$, $t>0$, $i=1,\ldots,n$. Then there exist constants $C_1>0$
and $C_2(t)>0$ (depending on $t$, $\Omega$, $n$, and $(D_{ij})$) 
such that the following inequality holds for $t>0$:
\begin{equation}\label{1.rei2}
  H(\bm{c}(t)|\bar{\bm{c}}(t)) + C_1\sum_{i=1}^n\int_0^t\int_\Omega
	|\na(\sqrt{c_i}-\sqrt{\bar{c}_i})|^2 dxds \le C_2(t) H(\bm{c}^0|\bar{\bm{c}}^0).
\end{equation}
\end{corollary}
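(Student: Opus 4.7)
The plan is to deduce the corollary from Theorem~\ref{thm.ws} in two stages. First, I upgrade the new hypothesis $\na\sqrt{\bar c_i}\in L^\infty$ together with $\bar c_i\ge m$ to the $L^\infty$ control of the partial velocities $\bar u_i$ required by Theorem~\ref{thm.ws}. Second, I transfer the dissipation estimate from the weighted norm $\sum_i c_i|u_i-\bar u_i|^2$ to the Fisher-information-type quantity $\sum_i|\na(\sqrt{c_i}-\sqrt{\bar c_i})|^2$.

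For the first stage, I rewrite the force--flux relations for $\bar{\bm c}$ in the form
\[
   \sum_{j=1}^{n}\frac{\bar c_j}{D_{ij}}(\bar u_i-\bar u_j)
   =-\frac{\na\bar c_i}{\bar c_i}
   =-\frac{2\,\na\sqrt{\bar c_i}}{\sqrt{\bar c_i}},
\]
whose right-hand side lies in $L^\infty(\Omega\times(0,\infty))$ under the current assumptions. After the rescaling $Z_i:=\sqrt{\bar c_i}\,\bar u_i$ the system becomes $A(\bar{\bm c})\bm Z=\bm F$ with $F_i=-2\na\sqrt{\bar c_i}$ and $A$ the matrix in \eqref{1.A}. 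The barycentric constraint $\sum_i\bar c_i\bar u_i=0$ places $\bm Z\in L=\{z\in\R^n:\sqrt{\bar{\bm c}}\cdot z=0\}$, and on $L$ the symmetric positive semidefinite matrix $A(\bar{\bm c})$ has a spectral gap that is uniform on $\Omega\times(0,\infty)$ thanks to $\bar c_i\ge m$. The Bott--Duffin inverse discussed in the introduction therefore yields $\|\bm Z\|_{L^\infty}\le C\|\bm F\|_{L^\infty}$ and hence $\bar u_i\in L^\infty(\Omega\times(0,\infty))$. Theorem~\ref{thm.ws} then gives
\begin{equation*}
   H(\bm c(t)|\bar{\bm c}(t))+\sum_{i=1}^{n}\int_0^t\!\int_\Omega c_i|u_i-\bar u_i|^2\,dx\,ds\le C(t)\,H(\bm c^0|\bar{\bm c}^0).
\end{equation*}

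For the second stage, using $\na\sqrt{c_i}=-\tfrac12\sqrt{c_i}\sum_j\tfrac{c_j}{D_{ij}}(u_i-u_j)$ (and its analogue for $\bar c_i$) together with the split $u_i-u_j=[(u_i-\bar u_i)-(u_j-\bar u_j)]+(\bar u_i-\bar u_j)$ gives the decomposition
\[
   \na(\sqrt{c_i}-\sqrt{\bar c_i})
   =-\frac{\sqrt{c_i}}{2}\sum_j\frac{c_j}{D_{ij}}\bigl[(u_i-\bar u_i)-(u_j-\bar u_j)\bigr]
   +\frac12\sum_j\frac{\bar u_i-\bar u_j}{D_{ij}}\bigl[\sqrt{\bar c_i}\bar c_j-\sqrt{c_i}c_j\bigr].
\]
Squared and summed in $i$, the first piece is dominated pointwise by a constant times $\sum_i c_i|u_i-\bar u_i|^2$ via Cauchy--Schwarz and $c_j\le1$. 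In the second piece, $\bar u_i-\bar u_j\in L^\infty$ by stage one, while $\sqrt{\bar c_i}\bar c_j-\sqrt{c_i}c_j=\sqrt{\bar c_i}(\bar c_j-c_j)+c_j(\sqrt{\bar c_i}-\sqrt{c_i})$, combined with $|c-\bar c|\le2|\sqrt c-\sqrt{\bar c}|$ and the elementary inequality $(\sqrt{c_i}-\sqrt{\bar c_i})^2\le c_i\log(c_i/\bar c_i)-(c_i-\bar c_i)$, reduces its $L^2$ norm to $H(\bm c|\bar{\bm c})$. Integrating in time produces
\[
   \sum_{i=1}^{n}\int_0^t\!\int_\Omega|\na(\sqrt{c_i}-\sqrt{\bar c_i})|^2\,dx\,ds\le C\sum_{i=1}^{n}\int_0^t\!\int_\Omega c_i|u_i-\bar u_i|^2\,dx\,ds+C'\!\int_0^t\! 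H(\bm c(s)|\bar{\bm c}(s))\,ds,
\]
and both terms on the right are bounded by a multiple of $C(t)H(\bm c^0|\bar{\bm c}^0)$ thanks to stage one, which proves \eqref{1.rei2}.

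The main obstacle I anticipate is stage one: extracting a quantitative $L^\infty$ bound on $\bar{\bm u}$ hinges on a uniform spectral gap for $A(\bar{\bm c})|_L$, and this is precisely where the lower bound $\bar c_i\ge m$ must be invoked carefully---without it, the Bott--Duffin inverse may blow up as $\bar c_i\to 0$. Once this $L^\infty$ bound is in hand, the remaining algebraic manipulations and time integration are routine.
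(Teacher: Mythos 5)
Your proposal is correct, but it takes a genuinely different route from the paper. You factor the corollary through Theorem~\ref{thm.ws}: in stage~one you upgrade the new hypotheses to the $L^\infty$ control of $\bar u_i$ via the Bott--Duffin inversion $\sqrt{\bar c_i}\,\bar u_i = -2\sum_j A^{BD}_{ij}(\bar{\bm c})\na\sqrt{\bar c_j}$ and the lower bound $\bar c_i\ge m$; in stage~two you transfer the weighted velocity dissipation to the gradient quantity by an algebraic decomposition of $\na(\sqrt{c_i}-\sqrt{\bar c_i})$ and close with the elementary inequalities of Lemma~\ref{lem.ent}. The paper instead proves a separate relative-entropy inequality (Lemma~\ref{lem.relent2}) stated directly in the Bott--Duffin variable $Z_i=\na\sqrt{c_i}-(\sqrt{c_i}/\sqrt{\bar c_i})\na\sqrt{\bar c_i}$, bounds $|P_L\bm Z|^2$ from below by $\sum_i|\na(\sqrt{c_i}-\sqrt{\bar c_i})|^2$ minus lower-order terms, and estimates the commutator term $(\sqrt{c_i/\bar c_i})A^{BD}_{ij}(\bar{\bm c})-A^{BD}_{ij}(\bm c)(\sqrt{c_j/\bar c_j})$ via Lipschitz continuity of the Bott--Duffin inverse. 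Your approach is more economical because it reuses Theorem~\ref{thm.ws} rather than restarting the estimate; the paper's approach is self-contained and is the template the authors generalize in Section~\ref{sec.gen}, where Theorem~\ref{thm.ws} is not available. In fact the paper itself remarks, immediately after Theorem~\ref{thm.strong}, that the hypotheses of Corollary~\ref{coro} imply $\bar u_i\in L^\infty$ via Lemma~\ref{lem.cu2}, so your stage one is essentially already present in the text. One small correction to the concluding caveat: the uniform spectral gap for $A(\bar{\bm c})|_L$ does \emph{not} hinge on $\bar c_i\ge m$ --- Lemma~\ref{lem.A} gives the bound $\mu=\min_{i\ne j}(1/D_{ij})$ for all nonnegative $\bm c$ on the simplex, including vacuum. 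The lower bound $\bar c_i\ge m$ is needed only in the last step of stage one, when you divide $\sqrt{\bar c_i}\,\bar u_i\in L^\infty$ by $\sqrt{\bar c_i}$ to conclude $\bar u_i\in L^\infty$.
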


The relative entropy inequality \eqref{1.rei2} is the analogue of the
entropy estimate \eqref{1.ei}. It can be achieved by working with the square 
roots $\sqrt{c_i}$ as the main variables. More precisely, we write the force-flux
relations in \eqref{1.eq} as
\begin{equation}\label{1.eqA}
  2\na\sqrt{c_i} = -\sum_{j=1}^n A_{ij}(\bm{c})\sqrt{c_j}u_j, \quad i=1,\ldots,n,
\end{equation}
subject to $\sum_{i=1}^n c_iu_i=0$, where $A(\bm{c})$ is defined in \eqref{1.A}. 
This system cannot be directly inverted,
since $\ker A(\bm{c}) =\operatorname{span}\{\sqrt{\bm{c}}\}$. However,
introducing the Bott--Duffin inverse $A^{BD}(\bm{c})$ of $A(\bm{c})$ with respect to 
$L:=(\operatorname{span}\{\sqrt{\bm{c}}\})^\perp$ (see Section \ref{sec.A} and Appendix \ref{app}), we can 
invert \eqref{1.eqA}, leading to
\begin{equation}\label{1.eqABD}
  \sqrt{c_i}u_i = -2\sum_{j=1}^n A_{ij}^{BD}(\bm{c})\na\sqrt{c_j}, \quad
	i=1,\ldots,n,
\end{equation}
and system \eqref{1.eq} can be formulated in the concise form
\begin{equation}\label{1.concise}
  \pa_t c_i = 2\diver\bigg(\sum_{j=1}^n\sqrt{c_i}A_{ij}^{BD}(\bm{c})\na\sqrt{c_j}
	\bigg), \quad i=1,\ldots,n.
\end{equation}
The Bott--Duffin inverse $A^{BD}(\bm{c})$ equals the group inverse
studied in \cite{BoDr20}, since $L=\ran A(\bm{c})$. Compared to \cite{BoDr20},
we work here with the square roots $\sqrt{c_i}$ instead of the chemical
potentials $\log c_i$ (see \cite[(4.25)]{BoDr20}).
The relative entropy inequality \eqref{1.aux} is rewritten in the form
(see Lemma \ref{lem.relent2})
\begin{align}\label{1.aux2}
  \frac{dH}{dt}(\bm{c}|\bar{\bm{c}}) &+ 4\sum_{i,j=1}^n\int_\Omega
	A_{ij}^{BD}(\bm{c})Z_i\cdot Z_j dx \\
	&\le 4\sum_{i,j=1}^n\int_\Omega
	Z_i\cdot\na\sqrt{\bar{c}_j}\bigg(\frac{\sqrt{c_i}}{\sqrt{\bar{c}_i}}
	A_{ij}^{BD}(\bar{\bm{c}})
	- A_{ij}^{BD}( \bm{c})\frac{\sqrt{c_j}}{\sqrt{\bar{c}_j}}\bigg)dx, \nonumber
\end{align} 
where $Z_i=\na\sqrt{c_i}-\sqrt{c_i/\bar{c}_i}\na\sqrt{\bar{c}_i}$, $i=1,\ldots,n$.
We prove in Lemma \ref{lem.A} that the Bott--Duffin inverse is symmetric and
positive definite on $L$,
$$
  \bm{Z}^T A^{BD}(\bm{c})\bm{Z}\ge \lambda|P_L\bm{Z}|^2,
	\quad \bm{Z}=(Z_1,\ldots,Z_n),
$$
where $\lambda>0$ is a uniform lower bound for the positive eigenvalues of
$A^{BD}(\bm{c})$. Inequality \eqref{1.rei2} now follows from this property and
suitable estimates for the right-hand side of \eqref{1.aux2}.

The above-mentioned techniques can be extended to a class of generalized
Maxwell--Stefan systems, which includes several examples of
cross-diffusion systems occurring in applications (see Section \ref{sec.exam}): 
\begin{align}
  \pa_t c_i + \diver(c_iu_i) &= 0, \quad \sum_{j=1}^n c_ju_j = 0, \label{1.gms1} \\
	-\sum_{j=1}^n K_{ij}(\bm{c}) c_ju_j &= c_i\na\frac{\delta H}{\delta c_i}(\bm{c})
	- c_i \sum_{j=1}^n c_j\na\frac{\delta H}{\delta c_j}(\bm{c}), \quad
	i=1,\ldots,n, \label{1.gms2}
\end{align}
together with the initial and boundary conditions \eqref{1.bic}, where
$\delta H/\delta c_i$ denotes the variational derivative of $H$. Again
$\sum_{i=1}^n c_{i}^0=1$ implies that $\sum_{i=1}^n c_i(t)=1$ in $\Omega$, 
$t>0$. We assume that
$$
  H(\bm{c}) = \sum_{i=1}^n \int_\Omega h_i(c_i)dx,
$$
which gives $\delta H/\delta c_i=h_i'$, and $(K_{ij})\in\R^{n\times n}$ 
satisfies $\sum_{i=1}^n K_{ij}(\bm{c})=0$ for all $\bm{c}\in\R_+^n$.
This model was proposed in \cite{HJT19} and can be obtained as the high-friction
limit of multicomponent Euler systems. It can also be derived from elementary
thermodynamic considerations; see Appendix \ref{app.thermo}.
If the entropy $H(\bm{c})$ equals
\eqref{1.ent} and $K_{ij}(\bm{c})=\sqrt{c_i}A_{ij}(\bm{c})/\sqrt{c_j}$,
where $A_{ij}(\bm{c})$ is defined in \eqref{1.A}, then system 
\eqref{1.gms1}--\eqref{1.gms2} reduces to \eqref{1.eq}. 
We refer to \cite{BoDr15,CuBy99,Lam06} for multicomponent diffusion models 
that account for other factors, such as thermal conduction, viscous stresses, 
chemical reactions, etc.

We introduce the matrix $B(\bm{c})=(B_{ij}(\bm{c}))\in\R^{n\times n}$ by
\begin{equation}\label{1.AK}
  B_{ij}(\bm{c}) = \frac{1}{\sqrt{c_i}}K_{ij}(\bm{c})\sqrt{c_j}, \quad
	i,j=1,\ldots,n,
\end{equation}
and we assume that $B(c)$ is symmetric
and as before, we set $L:=\{\bm{z}\in\R^n:\sqrt{\bm{c}}\cdot\bm{z}=0\}$ and 
$L^\perp = \mbox{span}\{\sqrt{\bm{c}}\}$.
We write \eqref{1.gms2} as (see the beginning of Section \ref{sec.gen})
$$
  -\sum_{j=1}^n B_{ij}(\bm{c})\sqrt{c_j}u_j
	= \sum_{j=1}^n(P_L)_{ij}\sqrt{c_j}\na h_j'(c_j), \quad i=1,\ldots,n.
$$
We show in Lemma \ref{lem.AK} that this system can be inverted, leading to
$$
  \sqrt{c_i}u_i = -\sum_{j=1}^n B_{ij}^{BD}(\bm{c})\sqrt{c_j}\na h'_j(c_j),
$$
where $B^{BD}(\bm{c})$ is the Bott--Duffin inverse of $B(\bm{c})$,
and system \eqref{1.gms1}--\eqref{1.gms2} can be formulated as
$$
  \pa_t c_i = \diver\bigg(\sum_{j=1}^n\sqrt{c_i}B_{ij}^{BD}(\bm{c})\sqrt{c_j}
	\na\frac{\delta H}{\delta c_j}(\bm{c})\bigg), \quad i=1,\ldots,n,
$$
which, by the way, equals \eqref{1.concise} if $H(\bm{c})$ is given by
\eqref{1.ent} and $B(\bm{c})=A(\bm{c})$.

We suppose for all $\bm{c}\in[0,1]^n$ the following conditions on the matrix
$B(\bm{c})$:

\renewcommand{\labelenumi}{(B\theenumi)}
\begin{enumerate}
\item $B(\bm{c})$ is symmetric and $L=\ran B(\bm{c})$,
$L^\perp=\ker(B(\bm{c})P_L)$.
\item For all $i,j=1,\ldots,n$ and $s>0$, $B_{ij}(\bm{c})$ is bounded and
Lipschitz continuous for all $\bm{c}\in[s,1]^n$.
\item There exists a function $\gamma:(0,\infty)\to(0,\infty)$ such that for all
$m>0$ and all $s\ge m$, it holds that $\gamma(s)\le \gamma(m)$ and
$\|B(\bm{c})\|_F\le \gamma(\min_{i=1,\ldots,n}c_i)$.
\item All nonzero eigenvalues of $B(\bm{c})$ are not smaller than a positive
constant $\mu>0$.
\end{enumerate}
 
The partial free energy functions $h_i(c_i)$ are associated to the pressures 
$p_i (c_i)$ via the thermodynamic relations
\begin{equation}\label{4.defpress}
  p_i' (c_i) = c_i h_i'' (c_i), \quad
  p_i (c_i)  = c_i h_i'(c_i)  - h_i (c_i).
\end{equation}
For $h_i(c_i)$ and $p_i (c_i)$, we assume that, for some constants $K_1$, $K_2>0$,
it holds that
\begin{equation}\label{hyp.fnh}\tag{H}
  h_i \in C^3((0,1]), \quad 0 < c_i h_i''(c_i) \le K_1,\
	|p_i''(c_i)| \le K_2 h_i''(c_i)\mbox{ for }c_i \in (0,1]
\end{equation}
for $i=1,\ldots,n$. 
This hypothesis implies that $h_i (c_i)$ is strictly convex, $p_i (c_i)$ 
is Lipschitz on $(0,1]$. The functions $h_i (c_i) = c_i\log c_i - c_i$ and 
$h_i (c_i) = c_i^\gamma$, $\gamma > 1$, satisfy \eqref{hyp.fnh}.

Our final main result is the weak-strong uniqueness property for 
\eqref{1.gms1}--\eqref{1.gms2}.

\begin{theorem}[Weak-strong uniqueness for the generalized system]\label{thm.gws}
Let Assumptions (A1)--(A3) and (B1)--(B4) hold, and let $h_i$ satisfy 
Hypothesis (H).
Let $\bm{c}$ be a weak solution and $\bar{\bm{c}}$ be a strong solution
to \eqref{1.bic}, \eqref{1.gms1}--\eqref{1.gms2}. We suppose that $\bar{\bm{c}}$
satisfies $\bar{c}_i(t)\ge m$ in $\Omega$, $t>0$ for some constant $m>0$, 
$$
  h_i'(\bar{c}_i)\in H_{\rm loc}^1(\Omega\times(0,\infty))
	\cap L_{\rm loc}^\infty(0,\infty;W^{2,\infty}(\Omega)), \quad i=1,\ldots,n,
$$
and the entropy identity
\begin{equation}\label{1.entid}
  H(\bar{\bm{c}}(t)) + \sum_{i,j=1}^n\int_0^t\int_\Omega 
	\sqrt{\bar{c}_i\bar{c}_j}B_{ij}^{BD}(\bar{\bm{c}})
	\na h'_i(\bar{c}_i)\cdot\na h'_j(\bar{c}_j)dxds
	= H(\bar{\bm{c}}^0)
\end{equation}
for $t>0$. Then there exists a constant $C(t)>0$, depending on $t$, $m$, and
$(D_{ij})$, such that
$$
  H(\bm{c}(t)|\bar{\bm{c}}(t)) \le C(t)H(\bm{c}^0|\bar{\bm{c}}^0)
	\quad\mbox{for }t>0.
$$
If the initial data coincide, then
$\bm{c}(t)=\bar{\bm{c}}(t)$ in $\Omega$ for $t>0$.
\end{theorem}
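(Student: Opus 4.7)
The plan is to mimic the strategy of Corollary~\ref{coro}, working through the Bott--Duffin inverse formulation \eqref{1.aux2} with the generalized matrix $B(\bm{c})$ in place of $A(\bm{c})$ and the general chemical-potential gradient $\sqrt{c_i}\na h_i'(c_i)$ in place of $2\na\sqrt{c_i}$. The uniform lower bound $\bar{c}_i\ge m$ will play the role that $\na\sqrt{\bar c_i}\in L^\infty$ played in the corollary, ensuring via (B2)--(B3) that all matrix-valued quantities evaluated at $\bar{\bm{c}}$ are bounded and Lipschitz in $\bm{c}$ on the region where the components remain bounded away from zero.

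First, I would derive a relative-entropy inequality for $H(\bm{c}|\bar{\bm{c}})=\sum_i\int[h_i(c_i)-h_i(\bar{c}_i)-h_i'(\bar{c}_i)(c_i-\bar{c}_i)]dx$. Its time derivative is computed by combining the entropy inequality \eqref{1.entropy} for $\bm{c}$ with the entropy identity \eqref{1.entid} for $\bar{\bm{c}}$, and by using $h_i'(\bar{c}_i)$ as a test function in the weak formulation of $\pa_t c_i + \diver(c_iu_i)=0$. This test function is admissible thanks to $h_i'(\bar{c}_i)\in H^1_{\rm loc}\cap L^\infty_{\rm loc}(W^{2,\infty})$ and the lower bound $\bar{c}_i\ge m$, via a standard density argument. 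Inserting the Bott--Duffin representation $\sqrt{c_i}u_i=-\sum_j B^{BD}_{ij}(\bm{c})\sqrt{c_j}\na h_j'(c_j)$ and its analog for $\bar{\bm{c}}$, and rearranging as in the derivation of \eqref{1.aux2}, I expect an estimate of the form
\begin{equation*}
  \frac{d}{dt}H(\bm{c}|\bar{\bm{c}}) + \sum_{i,j}\int\sqrt{c_ic_j}\,B^{BD}_{ij}(\bm{c})\,W_i\cdot W_j\,dx \le R,
\end{equation*}
where $W_i=\na h_i'(c_i)-\na h_i'(\bar{c}_i)$ and $R$ collects cross terms involving the differences $B^{BD}(\bm{c})-B^{BD}(\bar{\bm{c}})$, $\sqrt{c_i/\bar c_i}-1$, and the gradients $\na h_i'(\bar c_i)$.

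Next, I would bound the dissipation from below using (B4) and the analog of Lemma~\ref{lem.A}: after the appropriate projection onto $L=(\operatorname{span}\{\sqrt{\bm{c}}\})^{\perp}$, the quadratic form is at least $\mu\int|P_L\bm{V}|^2 dx$ with $V_i=\sqrt{c_i}W_i$. For the remainder $R$, I would exploit (B2) and $\bar c_i\ge m$ to obtain $|B^{BD}(\bm{c})-B^{BD}(\bar{\bm{c}})|\le C|\bm{c}-\bar{\bm{c}}|$, the bound $\bar c_i\ge m$ to control $|\sqrt{c_i/\bar c_i}-1|$, and the $W^{2,\infty}$ regularity of $h_i'(\bar c_i)$ to place the remaining gradient factors in $L^\infty$. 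Young's inequality then absorbs a small multiple of the dissipation into the left-hand side.

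The crucial step --- and the main obstacle --- is converting the leftover ``lower-order'' part of $R$ into a multiple of $H(\bm{c}|\bar{\bm{c}})$. These offending terms have the form $\int|c_i-\bar c_i|^2 dx$ or $\int|p_i(c_i)-p_i(\bar c_i)|^2 dx$. Hypothesis (H), specifically $|p_i''|\le K_2 h_i''$ together with $p_i'=c_ih_i''$, is precisely what is needed: it yields a pointwise control of the pressure relative quantity by the entropy relative density $h_i(c_i)-h_i(\bar c_i)-h_i'(\bar c_i)(c_i-\bar c_i)$, and similarly $|c_i-\bar c_i|^2$ is controlled by this density on the region $\bar c_i\ge m$ since $h_i''$ is bounded below there. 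Hence $R\le C(t)H(\bm{c}|\bar{\bm{c}})$ after absorbing the top-order part, and Gronwall's lemma delivers the claimed stability bound. Weak-strong uniqueness follows from the strict convexity of each $h_i$ (from $h_i''>0$ in (H)), which guarantees $H(\bm{c}|\bar{\bm{c}})=0$ only if $\bm{c}=\bar{\bm{c}}$ almost everywhere.
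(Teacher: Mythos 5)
There is a genuine gap in the proposal, and it is precisely the one the paper's proof is designed to overcome. You write that (B2) and $\bar c_i\ge m$ yield $|B^{BD}(\bm{c})-B^{BD}(\bar{\bm{c}})|\le C|\bm{c}-\bar{\bm{c}}|$, and that the dissipation is bounded below by $\lambda\int|P_L\bm{V}|^2dx$. Both of these statements require that \emph{both} arguments stay in $[s,1]^n$ for some $s>0$: Assumption (B2) gives Lipschitz continuity of $B(\bm{c})$ only on $[s,1]^n$ (with constant depending on $s$), and the coercivity bound \eqref{4.ABDpd} for $B^{BD}$ in Lemma \ref{lem.AK} holds only for $\bm{c}\in[m,1]^n$, because the Frobenius norm $\|B(\bm{c})\|_F$ is controlled by $\gamma(\min_i c_i)$ from (B3), which may blow up as $\min_i c_i\to 0$. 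The weak solution $\bm{c}$ is only required to satisfy $c_i\ge 0$; in the presence of vacuum these estimates degenerate and your argument breaks down on the set where some $c_i$ is small. Also note that (B4) by itself gives the \emph{upper} bound $\bm{z}^T B^{BD}(\bm{c})\bm{z}\le|\bm{z}|^2/\mu$ (cf.\ \eqref{4.ABDless}); the lower bound you invoke is not a consequence of (B4) alone.

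The paper deals with this by splitting $\Omega\times(0,t)$ along $c_*:=\min_ic_i$ into $\{c_*>m/2\}$ and $\{c_*\le m/2\}$ via a smooth cutoff $\chi(\bm{c})$ (see \eqref{4.defcutoff}). On the ``high'' region where $c_*>m/2$ your global strategy works: $B^{BD}$ is Lipschitz and coercive, and Lemma \ref{lem.dis}(ii) converts the dissipation into a full gradient estimate for $c_i-\bar c_i$. On the ``low'' region one cannot absorb the error terms into the dissipation; the key substitute observation is that $\bar c_i\ge m$ while $c_\ell<m/2+\eps$ for some $\ell$ forces $|c_\ell-\bar c_\ell|\ge m/2-\eps$, hence by Lemma \ref{lem.relenes} the relative entropy density $\sum_i h_i(c_i|\bar c_i)$ is bounded \emph{below} by a positive constant on $\{c_*<m/2+\eps\}$. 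This allows O(1) error contributions there to be dominated by $C\,H(\bm{c}|\bar{\bm{c}})$, which is exactly what Gronwall needs. In addition, the cutoff produces a commutator term with $\nabla\chi(\bm{c})$, and controlling it requires the partial gradient estimate $\int_0^t\int_\Omega \mathrm{1}_{\{c_*>m/2\}}|\nabla c_i|^2\,dxds\le C$ from the weak solution's entropy inequality (Lemma \ref{lem.dis}(iii)). None of these devices---the cutoff, the lower bound on the relative entropy in the vacuum region, the partial gradient estimate---appear in your sketch, so the ``crucial step'' you flag as converting leftover terms into a multiple of $H(\bm{c}|\bar{\bm{c}})$ cannot be carried out by Hypothesis (H) alone. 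Hypothesis (H) handles the pressure remainder $|p_i(c_i|\bar c_i)|\le K_2 h_i(c_i|\bar c_i)$, but that is only one of several obstructions.
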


We do not explore the existence of solutions with the stated regularity.
The existence of weak solutions to \eqref{1.bic}, \eqref{1.gms1}--\eqref{1.gms2}
can be shown by the techniques detailed in \cite{Jue15,Jue16} under suitable
assumptions on $K_{ij}$ and $h'_i$ that guarantee nonlinear
gradient estimates. The existence of (local-in-time) strong solutions can be
shown by following the approach of \cite{HMPW17} by formulating
\eqref{1.gms1}--\eqref{1.gms2} as
$$
  \pa_t c_i = {\mathcal M}_i(\bm{c}) 
	:= \diver\bigg(\sum_{j=1}^n M_{ij}(\bm{c})\na c_j\bigg), \quad
	i=1,\ldots,n,
$$
where $M_{ij}(\bm{c})$ depends on $B^{BD}(\bm{c})$ and $h''_i(c_i)$,
and verifying that the principal part of the operator ${\mathcal M}(\bm{c})
=({\mathcal M}_1,\ldots,{\mathcal M}_n)(\bm{c})$, 
defined on suitable spaces, is normally elliptic and satisfies the
Lopatinski--Shapiro condition. By \cite[Theorem 8.2]{DHP03}, the operator
${\mathcal M}(\bm{c})$ has maximal regularity of type $L^p$ and the local
existence result follows from \cite[Theorem A1]{HMPW17}.

The strategy of the proof of Theorem \ref{thm.gws}
is similar to that one of Theorem \ref{thm.ws}, but
it is more involved. First, we show a relative entropy inequality.
 The terms of this inequality are estimated by splitting the domain into two regions:
$c_* (x,t) :=\min_{i=1,\ldots,n}c_i(x,t)\le m/2$ and $c_* (x,t) >m/2$,
where $m>0$ is the uniform lower bound for $\bar{c}_i$. 
The final estimate reads
\begin{align}\label{1.rei3}
  \frac{dH}{dt}(\bm{c}|\bar{\bm{c}}) 
	&+ \frac12\sum_{i,j=1}^n\int_\Omega (1-\chi(\bm{c}))
	\sqrt{c_ic_j}B_{ij}^{BD}(\bm{c})\na h'_i(c_i)\cdot\na h'_j(c_j) dx \\
	&{}+ C(m)\sum_{i=1}^n\int_\Omega \chi(\bm{c})
	|\na(c_i-\bar{c}_i)|^2dx
	\le CH(\bm{c}|\bar{\bm{c}}), \nonumber
\end{align}
where $\chi(\bm{c})$ a cutoff function that vanishes if $c_i\le m/2$ for
some $i$ (see \eqref{4.defcutoff} for details).
An application of Gronwall's lemma completes the proof.
Notice, however,
that we do not obtain a gradient estimate as in \eqref{1.rei2}.

By specifying the coefficients $K_{ij}(\bm{c})$ and the entropy densities
$h_i$, we prove the weak-strong property for cross-diffusion systems
describing physical vapor deposition processes \cite{BaEh18}
and for the tumor-growth model suggested in \cite{JaBy02} 
and analyzed in \cite{JuSt13} {\color{PineGreen} and the Maxwell-Stefan system considering different molar masses that is derived in \cite{BCH54,BoDr15}}; see Section \ref{sec.exam}.

The main contributions of this work are, first, the derivation of the
relative entropy inequality \eqref{1.aux} for the Maxwell-Stefan system and 
\eqref{1.rei3} for generalized Maxwell-Stefan systems.
Second, the introduction of the
Bott--Duffin inverse provides an efficient way to reduce the Maxwell--Stefan
system to a (degenerate) parabolic system formulated in the square roots
$\sqrt{c_i}$. (Related formulations using the chemical potentials 
$\delta H/\delta c_i$ can be found in
\cite{BoDr20}.) Third, we show that our technique can be extended to more general
Maxwell--Stefan systems which may have degeneracy at zero. 

The paper is organized as follows. We study the properties of the matrix
$A(\bm{c})$, defined in \eqref{1.A}, and its Bott--Duffin inverse
$A^{BD}(\bm{c})$ in Section \ref{sec.A}. In Section \ref{sec.ws}, we recall
the existence results for global weak and local strong solutions to 
\eqref{1.eq}--\eqref{1.bic}, prove the relative entropy inequalities \eqref{1.rei}
and \eqref{1.aux} as well as Theorem \ref{thm.ws} and Corollary \ref{coro}.
Section \ref{sec.gen} is devoted to the existence of the Bott--Duffin inverse
of $B(\bm{c})$, defined in \eqref{1.AK}, and the proof of the 
relative entropy inequality
\eqref{1.rei3} eventually leading to the weak-strong uniqueness Theorem \ref{thm.gws}. 
In Section 5, we present some examples that fit into our framework.
Finally, we recall the definition and some properties of the Bott--Duffin inverse
in Appendix \ref{app}, show two simple inequalities for the Boltzmann entropy
density in Appendix \ref{app.ent}, and derive the generalized model
\eqref{1.gms1}--\eqref{1.gms2} from thermodynamic principles 
in Appendix \ref{app.thermo}. 

\subsection*{Notation}

We set $\R_+= [0,\infty)$. Elements of the matrix $A\in\R^{n\times n}$ are denoted 
by $A_{ij}$, $i,j=1,\ldots,n$, and the elements of a vector $\bm{c}\in\R^n$ are 
$c_1,\ldots,c_n$. If $f:\R\to\R$ is any function, we define
$f(\bm{c})=(f(c_1),\ldots,f(c_n))$ for $\bm{c}\in\R^n$. 
In the whole paper, $C>0$, $C_i>0$ denote generic constants
whose values change from line to line.


\section{Properties of the matrix $A(\bm{c})$}\label{sec.A}

The properties of the matrix $A(\bm{c})$, defined in \eqref{1.A}, have been
studied in \cite{Bot11,HMPW17,JuSt13} under the assumption $c_i>0$ for all 
$i=1,\ldots,n$. Our results are valid for nonnegative concentrations $c_i\ge 0$,
including vacuum.

Let $\bm{c}\in\R^n_+$. Since $(D_{ij})$ is symmetric, we have for all $\bm{z}\in\R^n$,
$$
  0 = \sum_{i,j=1}^n A_{ij}(\bm{c})z_j = \sum_{i,j=1,\,j\neq i}^n
	\frac{c_j}{D_{ij}}z_i - \sum_{i,j=1,\,j\neq i}^n 
	\frac{\sqrt{c_ic_j}}{D_{ij}}z_j = \sum_{i,j=1,\,j\neq i}^n
	\frac{\sqrt{c_j}}{D_{ij}}(\sqrt{c_j}z_i - \sqrt{c_i}z_j),
$$
showing that $\operatorname{span}\{\sqrt{\bm{c}}\}=\ker A(\bm{c})$. We set
\begin{align*}
  \ran A(\bm{c}) &= L := \{\bm{x}\in\R^n:\sqrt{\bm{c}}\cdot\bm{x} = 0\}, \\
	\ker A(\bm{c})  &= (\ran A(\bm{c}))^\perp = L^\perp 
	= \operatorname{span}\{\sqrt{\bm{c}}\},
\end{align*}
and note that $\sum_{i=1}^n c_i =1$ implies that 
$|\sqrt{\bm{c}}|^2 = c_1+\cdots+c_n = 1$. 
The projection matrices $P_L$ on $L$ and $P_{L^\perp}$ on $L^\perp$ are given by
\begin{equation}\label{2.PL}
  (P_L)_{ij} = \delta_{ij} - \sqrt{c_ic_j}, \quad 
	(P_{L^\perp})_{ij} = \delta_{ij} - (P_L)_{ij} = \sqrt{c_ic_j}, \quad
	i,j=1,\ldots,n.
\end{equation}

\begin{lemma}\label{lem.A}
Let $\bm{c}\in\R_+^n$ be such that $\sum_{i=1}^n c_i=1$. Then
\begin{equation}\label{2.A}
  \bm{z}^TA(\bm{c})\bm{z} \ge \mu|P_L\bm{z}|^2\quad\mbox{for all }\bm{z}\in\R^n,
\end{equation}
where $\mu=\min_{i\neq j}(1/D_{ij})$. Moreover, the Bott--Duffin inverse
$$
A^{BD}(\bm{c}) = P_L(A(\bm{c})P_L+P_{L^\perp})^{-1}
$$ 
is well defined, symmetric, and satisfies
\begin{equation}\label{2.ABD}
  \bm{z}^TA^{BD}(\bm{c})\bm{z} \ge \lambda|P_L\bm{z}|^2\quad\mbox{for all }
	\bm{z}\in\R^n,
\end{equation}
where $\lambda=(2\sum_{i\neq j}(1/D_{ij} + 1))^{-1}$. 
\end{lemma}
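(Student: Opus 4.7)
The plan is to prove the three assertions---the quadratic-form inequality \eqref{2.A}, the well-definedness and symmetry of $A^{BD}(\bm{c})$, and the spectral bound \eqref{2.ABD}---in that order.

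\textbf{Step 1: a sum-of-squares identity.} I would first substitute the explicit entries \eqref{1.A} into $\bm{z}^T A(\bm{c})\bm{z}$ and use the symmetry $D_{ij}=D_{ji}$ to symmetrize the diagonal contribution. This yields the clean identity
$$
\bm{z}^T A(\bm{c})\bm{z} = \frac{1}{2}\sum_{i\ne j}\frac{1}{D_{ij}}\bigl(\sqrt{c_j}\,z_i - \sqrt{c_i}\,z_j\bigr)^2,
$$
which is manifestly nonnegative and has $\mu=\min_{i\ne j}(1/D_{ij})$ as a uniform lower bound on the coefficients. Expanding the square, invoking $\sum_i c_i=1$ (equivalently $|\sqrt{\bm{c}}|=1$), and simplifying gives $\sum_{i\ne j}(\sqrt{c_j}z_i-\sqrt{c_i}z_j)^2=2(|\bm{z}|^2-(\sqrt{\bm{c}}\cdot\bm{z})^2)=2|P_L\bm{z}|^2$, whence \eqref{2.A} follows at once.

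\textbf{Step 2: existence and symmetry of $A^{BD}(\bm{c})$.} From the identification $\ker A(\bm{c})=\mathrm{span}\{\sqrt{\bm{c}}\}=L^\perp$ already obtained at the start of the section, it follows that $L=\ran A(\bm{c})$ and that $A(\bm{c})$ restricts to a bijection on $L$. To see that $M:=A(\bm{c})P_L + P_{L^\perp}$ is invertible I would pick any $\bm{x}=\bm{x}_L+\bm{x}_{L^\perp}\in\ker M$, observe that $A(\bm{c})\bm{x}_L\in L$ while $\bm{x}_{L^\perp}\in L^\perp$, and conclude from the orthogonal splitting of $A(\bm{c})\bm{x}_L+\bm{x}_{L^\perp}=0$ that both summands vanish; the injectivity of $A(\bm{c})|_L$ then forces $\bm{x}_L=0$. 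Since $A(\bm{c})$ is symmetric with $AP_L=A=P_L A$, both $M$ and $M^{-1}$ are symmetric, and the block-diagonal structure of $M$ with respect to the decomposition $\R^n = L\oplus L^\perp$ ensures $P_L M^{-1}=M^{-1}P_L$, giving symmetry of $A^{BD}(\bm{c})$. Concretely, $A^{BD}(\bm{c})$ acts as $(A(\bm{c})|_L)^{-1}$ on $L$ and as zero on $L^\perp$, i.e., it coincides with the Moore--Penrose pseudoinverse, as promised by Appendix \ref{app}.

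\textbf{Step 3: the spectral bound.} Using symmetry and the fact that $A^{BD}(\bm{c})$ annihilates $L^\perp$, I would rewrite $\bm{z}^T A^{BD}(\bm{c})\bm{z} = (P_L\bm{z})^T A^{BD}(\bm{c})(P_L\bm{z})$, which reduces the problem to establishing $\bm{w}^T A^{BD}(\bm{c})\bm{w}\ge \lambda|\bm{w}|^2$ for $\bm{w}\in L$. Since $A^{BD}(\bm{c})|_L=(A(\bm{c})|_L)^{-1}$, this is equivalent to an upper bound $\lambda_{\max}(A(\bm{c})|_L)\le 1/\lambda$, which follows from the sum-of-squares identity of Step 1 upon estimating $(\sqrt{c_j}z_i-\sqrt{c_i}z_j)^2\le 2(c_j z_i^2+c_i z_j^2)\le 2(z_i^2+z_j^2)$ using $c_i,c_j\le 1$ and collecting terms. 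The extra $+1$ inside the sum in the stated constant $\lambda$ arises naturally when one chooses instead to control $\|M\|\le \|A(\bm{c})\|+\|P_{L^\perp}\|$ and then pass from an upper bound on $M$ to a lower bound on $M^{-1}$.

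The main obstacle I anticipate is technical rather than conceptual: keeping constants uniform in $\bm{c}\in\R_+^n$ with $\sum_i c_i=1$---including in the vacuum limit $c_i\to 0$---when translating estimates for $A(\bm{c})$ into estimates for $A^{BD}(\bm{c})$ via $M$. The singular character of $A(\bm{c})$ at vacuum concentrations is precisely why one cannot invert $A(\bm{c})$ directly, and the entire point of passing to the Bott--Duffin inverse in the square-root variables $\sqrt{c_i}$ is that these manipulations remain well behaved on the closed simplex and produce $\bm{c}$-independent lower bounds on the positive spectrum, which is what downstream relative-entropy arguments require.
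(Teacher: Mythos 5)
Your proposal is correct, and for the key inequality \eqref{2.A} it takes a genuinely different and noticeably cleaner route than the paper. The paper proves \eqref{2.A} by ordering the indices so that the vanishing concentrations are at the end, writing $-A(\bm{c})-\alpha P_{L^\perp}$ in block form, invoking quasi-positivity, irreducibility, and the Perron--Frobenius theorem to locate the spectrum of the nonvacuum block, and then handling the vacuum block separately. Your sum-of-squares identity
$$
\bm{z}^T A(\bm{c})\bm{z} \;=\; \tfrac12\sum_{i\neq j}\frac{1}{D_{ij}}\bigl(\sqrt{c_j}\,z_i-\sqrt{c_i}\,z_j\bigr)^2,
\qquad
\sum_{i\neq j}\bigl(\sqrt{c_j}\,z_i-\sqrt{c_i}\,z_j\bigr)^2 = 2\,|P_L\bm{z}|^2,
$$
(using $\sum_i c_i=1$ and the symmetry $D_{ij}=D_{ji}$) bypasses all of that: it is purely algebraic, needs no case distinction at vacuum, and immediately yields \eqref{2.A} with the same constant $\mu$. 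This is in fact the quadratic-form analogue of the computation the paper already performs when it identifies $\ker A(\bm{c})=\operatorname{span}\{\sqrt{\bm{c}}\}$, so your argument is faithful to the structure of the problem and, if anything, more transparent. For the existence and symmetry of $A^{BD}(\bm{c})$ your direct verification of the injectivity of $A(\bm{c})P_L+P_{L^\perp}$ and its block-diagonal structure with respect to $L\oplus L^\perp$ is essentially the content of the paper's Lemma \ref{lem.bott2}, so this part is equivalent. For \eqref{2.ABD} the paper bounds the spectral radius of $A(\bm{c})+P_{L^\perp}$ by its Frobenius norm, whereas you bound $\lambda_{\max}(A(\bm{c})|_L)$ directly from the same sum-of-squares identity, producing the slightly sharper constant $(2\sum_{i\neq j}1/D_{ij})^{-1}\ge\lambda$; since the lemma only asserts the weaker bound with the stated $\lambda$, this is perfectly adequate, and you correctly explain how the paper's $+1$ in the constant arises from estimating $\|A+P_{L^\perp}\|$ instead of $\|A|_L\|$. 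In short: same conclusion, but your Step 1 trades a spectral-graph-theoretic argument for an elementary algebraic one, which makes the uniformity in $\bm{c}$ over the closed simplex (including vacuum) completely manifest.
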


\begin{proof} 
We first prove \eqref{2.A}.
Let $0<\alpha\le\mu$ and suppose that $c_i\neq 0$
for $i=1,\ldots,M$ and $c_i=0$ for $i=M+1,\ldots,n$. 
If necessary, we may rearrange the indices to achieve this ordering. 
Since $\sum_{i=1}^n c_i=1$, it holds that $M>0$. 
Thus, we can write $-A(\bm{c})-\alpha P_{L^\perp}$ in block diagonal form as
$$
  -A(\bm{c})-\alpha P_{L^\perp} = \left(\begin{array}{c|ccc}
	\widetilde{A} & 0 & 0 & 0 \\ \hline
	0 & a_{M+1} & & 0 \\
	\vdots & 0  & \ddots & 0 \\
	0 & 0 & & a_n
	\end{array}\right),
$$
where $\widetilde{A}\in\R^{M\times M}$ has the coefficients
$\widetilde{A}_{ii}=-A_{ii}(\bm{c})-\alpha c_i$ and 
$\widetilde{A}_{ij}=(1/D_{ij}-\alpha)\sqrt{c_ic_j}$ for 
$i,j=1,\ldots,M$, $i\neq j$,
and $a_j=-\sum_{k=1,\,k\neq j}^n c_k/D_{kj}$ for $j=M+1,\ldots,n$. 
Because of $\alpha\le\mu$, the matrix $\widetilde{A}$ is quasi-positive and
irreducible. Hence, by the Perron--Frobenius theorem \cite[Chapter~8]{Mey00}, 
the spectral radius of $\widetilde{A}$ is less than or equal to
the Perron--Frobenius eigenvalue that is a simple eigenvalue of 
$\widetilde{A}$ associated with a strictly positive eigenvector,
and all other eigenvalues of $\widetilde{A}$ have no positive eigenvector. 
In the present case, the Perron--Frobenius eigenvalue is given by $\lambda_{\rm PF}
= -\alpha$ and is associated with the eigenvector $(\sqrt{c_1},\ldots,\sqrt{c_M})$, 
recalling that $c_i>0$ for all $i=1,\ldots,M$ (also see the proof of Lemma 2.1
in \cite{JuSt13}). Since all eigenvalues of $\widetilde{A}$ are not larger than
$\lambda_{\rm PF}$, we have
$$
  \widetilde{\bm{z}}^T(-\widetilde{A})\widetilde{\bm{z}} 
	\ge \alpha|\widetilde{\bm{z}}|^2\quad\mbox{for }
	\widetilde{\bm{z}}=(z_1,\ldots,z_M)\in\R^M.
$$
This leads, for any $\alpha\le \mu$ and $\bm{z}\in\R^n$, to the inequality
\begin{align*}
  \bm{z}^T&(A(\bm{c}) + \alpha P_{L^\perp})\bm{z}
	= \widetilde{z}^T(-\widetilde{A})\widetilde{\bm{z}}
	+ \sum_{i=M+1}^n\sum_{j=1,\,j\neq i}^n \frac{c_j}{D_{ij}}z_i^2 \\
	&\ge \alpha|\widetilde{\bm{z}}|^2 + \min_{\substack{k,\ell=1,\ldots,M \\ k\neq\ell}}
	\frac{1}{D_{k\ell}}\sum_{j=1,\,j\neq i}^n c_j\sum_{i=M+1}^n z_i^2 
	\ge \alpha|\widetilde{\bm{z}}|^2 + \alpha\sum_{i=M+1}^n z_i^2 = \alpha|\bm{z}|^2,
\end{align*}
where we have used the fact that $\sum_{j=1,\,j\neq i}^n c_j = 1$ for
$i=M+1,\ldots,n$, since $c_i=0$ for exactly these indices.
This inequality implies that for all $\bm{z}=P_L\bm{z}+P_{L^\perp}\bm{z}\in\R^n$,
$$
  \bm{z}^TA(\bm{c})\bm{z} + \alpha|P_{L^\perp}\bm{z}|^2
	= \bm{z}^T(A(\bm{c}) + \alpha P_{L^\perp})\bm{z}
	\ge \alpha|P_L\bm{z}|^2 + \alpha|P_{L^\perp}\bm{z}|^2,
$$
which shows \eqref{2.A}.

The invertibility of $A(\bm{c})P_L+P_{L^\perp}$ is a consequence of Lemma 
\ref{lem.bott2} in the appendix. Consequently, the Bott--Duffin inverse
$A^{BD}(\bm{c})=P_L(A(\bm{c})P_L+P_{L^\perp})^{-1}$ exists. 

It remains to show \eqref{2.ABD}. 
The spectral radius $r(A(\bm{c})P_L+P_{L^\perp})$
is bounded by the Frobenius norm. Thus, because of $A(\bm{c})P_L = A(\bm{c})$
(see Lemma \ref{lem.bott2} in Appendix \ref{app}) and $0\le c_i\le 1$,
\begin{align*}
  r(A(\bm{c})P_L+P_{L^\perp}) &\le \|A(\bm{c})+P_{L^\perp}\|_F
	= \bigg(\sum_{i,j=1}^n(A_{ij}(\bm{c}) + \sqrt{c_ic_j})^2\bigg)^{1/2} \\
	&= \bigg\{\sum_{i=1}^n\bigg(\sum_{j=1,\,j\neq i}^n \frac{c_j}{D_{ij}}+c_i\bigg)^2
	+ \sum_{i,j=1,\,i\neq j}^n\bigg(1-\frac{1}{D_{ij}}\bigg)^2 c_ic_j\bigg\}^{1/2} \\
	&\le 2\sum_{i,j=1,\,i\neq j}\bigg(\frac{1}{D_{ij}}+1\bigg) = \frac{1}{\lambda}.
\end{align*}
We infer that the eigenvalues of $(A(\bm{c})P_L+P_{L^\perp})^{-1}$ are larger than
or equal to $\lambda$. Thus, in view of \eqref{app.bd}, we find that for all
$\bm{z}\in\R^n$,
$$
  \bm{z}^T A^{BD}(\bm{c})\bm{z} = (P_L\bm{z})^T(A(\bm{c})P_L+P_{L^\perp})^{-1}
	P_L\bm{z} \ge \lambda|P_L\bm{z}|^2,
$$
finishing the proof.
\end{proof}

Since $(\na\sqrt{c_1},\ldots,\na\sqrt{c_n})\in L$, the existence of the 
Bott--Duffin inverse guarantees that the solution of \eqref{1.eqA} 
can be expressed via the formula \eqref{1.eqABD}; see Appendix \ref{app}. 


\section{Weak-strong uniqueness for Maxwell--Stefan systems}\label{sec.ws}

\subsection{Existence theory}\label{sec.ex.ms}

We discuss the existence of weak and strong solutions to the Maxwell--Stefan system
\eqref{1.eq}--\eqref{1.bic}. First, we recall
the existence theorem for weak solutions, which was proved in \cite{JuSt13}.

\begin{theorem}[Global existence for Maxwell--Stefan systems]\label{thm.weak}
Let Assumptions (A1)--(A3) hold. Then there exists a weak solution to
\eqref{1.eq}--\eqref{1.bic} satisfying the entropy inequality \eqref{1.entropy} 
for $t>0$, or equivalently,
\begin{align*}
	H(\bm{c}(t)) + 4\sum_{i,j=1}^n\int_0^t\int_\Omega A^{BD}_{ij}(\bm{c})
	\na\sqrt{c_i}\cdot \na\sqrt{c_j} dxds
	&\le H(\bm{c}_0).
\end{align*}
\end{theorem}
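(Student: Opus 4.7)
The plan is to follow the entropy method of \cite{JuSt13}. The argument has two logically distinct ingredients: constructing a weak solution satisfying the energy inequality in the form \eqref{1.entropy}, and verifying that this is equivalent to the Bott--Duffin formulation stated in the theorem. The entropy $H(\bm{c})$ plays the role of a Lyapunov functional and supplies the only nontrivial a priori estimate available.

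For the construction I would set up a regularized, time-discrete approximation written in entropy variables $w_i := \log c_i$ (or, more carefully, the $w_i$ constrained to the simplex by a Lagrange multiplier accounting for $\sum c_i=1$). Working in entropy variables has two virtues: it automatically enforces the positivity $c_i>0$ for the approximants, so the singular matrix $A(\bm{c})$ never degenerates at the discrete level; and it convexifies the problem so existence of the approximants reduces to a finite-dimensional fixed-point or Galerkin argument after adding a higher-order regularization like $\varepsilon(\Delta w_i+w_i)$ for coercivity in $H^1$. At each step the discrete equation inherits a discrete entropy dissipation identity.

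Next I would derive uniform estimates. Multiplying by $\log c_i$ and summing yields control of the frictional dissipation $\frac12\sum_{i,j}\int\frac{c_ic_j}{D_{ij}}|u_i-u_j|^2\,dxdt$. Using the identity $\sqrt{c_i}u_i=-2\sum_j A_{ij}^{BD}(\bm{c})\nabla\sqrt{c_j}$ from \eqref{1.eqABD} together with Lemma \ref{lem.A}, this dissipation bounds $\sum_i\int|\nabla\sqrt{c_i}|^2\,dxdt$, yielding $\sqrt{c_i}\in L^2_{\mathrm{loc}}(0,\infty;H^1(\Omega))$ uniformly. Strong compactness of $\sqrt{c_i}$ in $L^2$ follows from an Aubin--Lions argument combined with the time-derivative estimate extracted from the weak form of \eqref{1.eq}; the fluxes $c_iu_i=\sqrt{c_i}\cdot\sqrt{c_i}u_i$ are bounded in $L^2$ and converge weakly, and the limit is identified via the Bott--Duffin expression, which is continuous in $\sqrt{\bm{c}}$ away from vacuum and handled at vacuum using the $L$-valued formulation.

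Finally, to see that the two forms of the inequality are equivalent, start from \eqref{1.fricdiss}: the frictional dissipation equals $\bm{Y}^T A(\bm{c})\bm{Y}$ where $Y_i=\sqrt{c_i}u_i$. The constraint $\sum_i c_iu_i=0$ means $\sqrt{\bm{c}}\cdot\bm{Y}=0$, so $\bm{Y}\in L=\mathrm{ran}\,A(\bm{c})$. Substituting $Y_i=-2\sum_j A_{ij}^{BD}(\bm{c})\nabla\sqrt{c_j}$ from \eqref{1.eqABD} and using $A(\bm{c})A^{BD}(\bm{c})=P_L$ (cf.\ Appendix \ref{app}) together with the fact that $(\nabla\sqrt{c_1},\dots,\nabla\sqrt{c_n})\in L$, one gets
$$
\tfrac12\sum_{i,j=1}^n\frac{c_ic_j}{D_{ij}}|u_i-u_j|^2 = 4\sum_{i,j=1}^n A_{ij}^{BD}(\bm{c})\nabla\sqrt{c_i}\cdot\nabla\sqrt{c_j}.
$$
The main obstacle throughout is the singular nature of $A(\bm{c})$ at vacuum, which obstructs a naive inversion of the force--flux relations and also complicates identification of the limit in the flux term. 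Both difficulties are circumvented by replacing the pointwise inverse with the Bott--Duffin inverse on $L$ from Section \ref{sec.A}, which is well defined and uniformly coercive on $L$ for all admissible $\bm{c}$.
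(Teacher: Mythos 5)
The paper does not actually prove this theorem: it explicitly attributes both the existence result and the entropy inequality \eqref{1.entropy} to \cite{JuSt13} and does not reproduce the construction. The only content of the theorem not covered by that citation is the assertion that \eqref{1.entropy} is equivalent to the Bott--Duffin formulation, and your derivation of this equivalence is the correct one: combine the algebraic identity $\frac12\sum_{i\neq j}\frac{c_ic_j}{D_{ij}}|u_i-u_j|^2 = \bm{Y}^T A(\bm{c})\bm{Y}$ obtained from \eqref{1.fricdiss} with $\bar{u}_i=0$, the expression $\bm{Y}=-2A^{BD}(\bm{c})\nabla\sqrt{\bm{c}}$ from \eqref{1.eqABD}, the symmetry of $A^{BD}(\bm{c})$ from Lemma \ref{lem.A}, and the identity $A(\bm{c})A^{BD}(\bm{c})\bm{b}=\bm{b}$ for $\bm{b}\in L$ (which follows from the solution formula \eqref{A.3}, since $\bm{b}$ and $A\bm{x}$ both lie in $L$ while $\bm{y}=\bm{b}-A\bm{x}\in L^\perp$ forces $\bm{y}=0$), applied with $\bm{b}=(\nabla\sqrt{c_1},\ldots,\nabla\sqrt{c_n})\in L$. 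Your sketch of the underlying existence argument (semi-discrete regularization in entropy variables, discrete entropy dissipation yielding a uniform bound for $\nabla\sqrt{c_i}$, Aubin--Lions compactness, and identification of the limiting flux) is the standard boundedness-by-entropy strategy of \cite{JuSt13} and is correct in outline; the paper, however, simply refers the reader to that reference for these details rather than reproducing them.
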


The existence of strong solutions was proved in \cite[Theorem 1]{Bot11}
and \cite[Theorem 3.2]{HMPW17}.

\begin{theorem}[Strong solutions for Maxwell--Stefan systems]\label{thm.strong}
Let $\Omega\subset\R^d$ $(d\ge 1)$ be a bounded domain with $\pa\Omega\in C^2$
and let $\bm{c}^0\in W^{2-2/p,p}(\Omega;\R^n)$ with $c_{i}^0\ge 0$,
$\sum_{i=1}^n c_i^0=1$ in $\Omega$, where $p>d+2$. 
Then there exists $T^*>0$ and a unique
solution $\bm{c}$ to \eqref{1.eq}--\eqref{1.bic} satisfying
$$
  c_i\in C^1((0,T^*);W^{2-2/p,p}(\Omega))\cap W^{1,p}(0,T;L^p(\Omega))\cap
	L^p(0,T;W^{2,p}(\Omega))
$$
for $i=1,\ldots,n$.
\end{theorem}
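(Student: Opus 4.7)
The plan is to reformulate \eqref{1.eq}--\eqref{1.bic} as a quasilinear parabolic system in $n-1$ unknowns and to invoke the maximal $L^p$-regularity theory for normally elliptic operators. First I would use the constraint $\sum_{i=1}^n c_i = 1$ to eliminate $c_n = 1 - \sum_{i<n}c_i$ and work with the reduced vector $\bm{c}' = (c_1,\ldots,c_{n-1})$. Combined with the barycentric condition $\sum_i c_iu_i = 0$, the singular force-flux relations of \eqref{1.eq} become an invertible linear system on the reduced phase space: using the Bott--Duffin inverse from Section \ref{sec.A} (equivalently, the reduced Maxwell--Stefan matrix studied in \cite{Bot11,JuSt13}), the fluxes can be expressed as $c_iu_i = -\sum_{j=1}^{n-1}M_{ij}(\bm{c}')\na c_j$ with coefficients $M_{ij}$ that are smooth in $\bm{c}'$ as long as $\min_i c_i>0$. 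System \eqref{1.eq} then takes the quasilinear divergence form
\[
  \pa_t c_i = \diver\Bigl(\sum_{j=1}^{n-1}M_{ij}(\bm{c}')\na c_j\Bigr), \quad i=1,\ldots,n-1,
\]
with the Neumann condition $\na c_i\cdot\nu = 0$ on $\pa\Omega$.

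Next, I would verify normal ellipticity of the reduced system in the sense of Amann, i.e.\ that the spectrum of the principal symbol $M(\bm{c}')|\xi|^2$ lies in the open right half-plane for every $\xi\in\R^d\setminus\{0\}$ and every admissible $\bm{c}'$. This property transfers from the strict positive-definiteness of $A(\bm{c})$ on the subspace $L$, established in \eqref{2.A}, through the linear change of variables effected by the elimination of $c_n$. The no-flux condition is of Neumann type, and the Lopatinski--Shapiro complementarity condition can then be checked in a standard way from the uniform invertibility of $M(\bm{c}')$. Since $p>d+2$, the Sobolev embedding $W^{2-2/p,p}(\Omega)\hookrightarrow C^1(\overline\Omega)$ ensures that the coefficients $M(\bm{c}')$ are continuous along solutions, so that $\min_i c_i$ stays bounded away from zero on a short time interval by continuity with respect to the initial data.

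With these structural properties in hand, \cite[Theorem 8.2]{DHP03} yields maximal $L^p$-regularity for the linearization, and a contraction-mapping argument in
\[
  W^{1,p}(0,T;L^p(\Omega))\cap L^p(0,T;W^{2,p}(\Omega))
\]
produces a unique local solution on a maximal interval $(0,T^*)$ with the regularity stated in the theorem. Nonnegativity of the $c_i$ and the identity $\sum_i c_i=1$ propagate from the initial data: the latter follows by summing \eqref{1.eq} and using $\sum_i c_iu_i=0$; the former can be recovered either by a truncation/comparison argument exploiting the $C^1$-regularity of the coefficients, or from the entropy dissipation which prevents the trajectory from reaching the boundary of the Gibbs simplex on short times.

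The hard part will be the verification of normal ellipticity after the elimination of $c_n$, since the unreduced matrix $A(\bm{c})$ is singular and the reduced coefficients $M_{ij}(\bm{c}')$ are only implicitly defined through the Bott--Duffin inverse. Lemma \ref{lem.A} provides precisely the ingredient needed to carry out this reduction cleanly: the estimate \eqref{2.A} transfers to a uniform lower bound on the eigenvalues of $M(\bm{c}')$ on the reduced space whenever the concentrations are bounded below, thereby giving a more transparent route than the direct symbol computations of \cite{Bot11,HMPW17} while yielding the same local existence result.
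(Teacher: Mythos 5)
The paper does not prove this theorem; it cites it from [Bot11, Theorem 1] and [HMPW17, Theorem 3.2], and your sketch is a reasonable reconstruction of the strategy used there (normal ellipticity, Lopatinski--Shapiro, Denk--Hieber--Pr\"uss maximal $L^p$-regularity, contraction mapping). However, there is a genuine gap in your treatment of the hypothesis $c_i^0\ge 0$. The statement explicitly allows initial data that touch zero, and in that regime your closing argument --- ``$\min_i c_i$ stays bounded away from zero on a short time interval by continuity with respect to the initial data'' --- is vacuous: if $\min_i c_i^0(x)=0$ somewhere, continuity gives you nothing. The immediate positivity of the strong solution (noted in the paragraph following the theorem) is a consequence of the structure of the Maxwell--Stefan diffusion matrix, not something one can assume for the short-time construction.

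Related to this, the route you suggest through the Bott--Duffin inverse in the square-root variables is not well suited to the boundary of the simplex. Expanding $\na\sqrt{c_j}=\tfrac{1}{2\sqrt{c_j}}\na c_j$ in \eqref{1.concise} produces coefficients of the form $\sqrt{c_i}\,A^{BD}_{ij}(\bm{c})/\sqrt{c_j}$, and as some $c_j\to 0$ the off-diagonal entries become $0/0$ indeterminacies; you would have to prove these singularities are removable and that the limiting matrix remains normally elliptic. The cited references avoid this by directly inverting the reduced $(n-1)\times(n-1)$ force-flux system in the concentration variables, without ever introducing $\sqrt{c_i}$, and show via a Perron--Frobenius argument that the resulting diffusion matrix is a \emph{smooth} function of $\bm{c}$ on a full neighborhood of the closed simplex $\{c_i\ge 0,\ \sum_i c_i=1\}$, with spectrum in the open right half-plane there. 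That structural fact --- smoothness and normal ellipticity \emph{up to and including} the boundary --- is the ingredient that makes the Amann/DHP machinery applicable to merely nonnegative data, and it is exactly what your proposal leaves unestablished. If you restrict to initial data with $\min_i c_i^0>0$, your argument is essentially sound, but it would then prove a weaker statement than the one quoted.
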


The strong solution of Theorem \ref{thm.strong} has the property of immediate
positivity: If $c_{i}^0\ge 0$ in $\Omega$ then $c_i(t)>0$ in $\Omega$
for $0<t<T'$, where $T'\le T$ depends on $\bm{c}^0$. Moreover, if the initial
data is close to a constant vector, the strong solution can be extended globally:
Let $\bm{c}^*\in\R_+^n$. Then there exists $\eps>0$ such that if the initial
data satisfies $\|\bm{c}^0-\bm{c}^*\|_{W^{2-2/p,p}(\Omega)}\le\eps$, then
the strong solution exists globally in time.

If $c_{i}^0>0$ in $\Omega$ for $i=1,\ldots,n$, the continuity of the strong
solution implies
that there exists $0<T''\le T^*$ and $m>0$ such that $c_i(t)\ge m>0$
in $\Omega$ for $i=1,\ldots,n$. Therefore, because of the embedding
$W^{2-2/p,p}(\Omega)\hookrightarrow C^1(\overline{\Omega})$, we have
$\sqrt{c_i}\in L^\infty(0,T'';W^{1,\infty}(\Omega))$, and $T''=\infty$
if $\|\bm{c}^0-\bm{c}^*\|_{W^{2-2/p,p}(\Omega)}$ is sufficiently small.
This shows that the strong solution satisfies the regularity assumptions
of Corollary \ref{coro}.

The assumption $\sqrt{c_i}\in L_{\rm loc}^\infty(0,\infty;W^{1,\infty}(\Omega))$
and the property $c_i(t)\ge m$ in $\Omega$ imply that the regularity condition 
$u_i\in L_{\rm loc}^\infty(\Omega\times(0,\infty))$ of Theorem \ref{thm.ws} 
is satisfied.
This is a consequence of the following lemma and
$m\sum_{i=1}^n |u_i|^2\le \sum_{i=1}^n c_i|u_i|^2\le C\sum_{i=1}^n|\na\sqrt{c_i}|^2$.
Moreover, the assumption $\log c_i\in H_{\rm loc}^1(\Omega\times(0,\infty))$
follows from $|\na\log c_i|\le |\na c_i|/m\in C^0((0,T'');C^0(\overline\Omega))$.

\begin{lemma}\label{lem.cu2}
Let $0\le c_i\le 1$ and let $u_i$ be given by the force-flux relations in
\eqref{1.eq} satisfying $\sum_{i=1}^n c_iu_i=0$. Then
there exists a constant $C>0$, only depending on $(D_{ij})$ such that
$$
  \sum_{i=1}^n c_i|u_i|^2 \le C\sum_{i=1}^n|\na\sqrt{c_i}|^2.
$$
\end{lemma}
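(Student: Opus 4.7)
The plan is to reduce the bound to the spectral gap of $A(\bm c)$ established in Lemma~\ref{lem.A}, after reformulating the force--flux relations in terms of the rescaled velocities $Y_i := \sqrt{c_i}\,u_i$.

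First, I would rewrite \eqref{1.eq} in the ``square-root'' form \eqref{1.eqA},
$$
2\na\sqrt{c_i} \,=\, -\sum_{j=1}^n A_{ij}(\bm c)\,Y_j, \qquad i=1,\ldots,n,
$$
obtained by substituting $\na c_i = 2\sqrt{c_i}\na\sqrt{c_i}$ in the force--flux relation and dividing by $\sqrt{c_i}$ on $\{c_i>0\}$; on the vacuum set $\{c_i=0\}$ both sides vanish almost everywhere (since $\na\sqrt{c_i}=0$ a.e.\ there by the standard Stampacchia property), so the identity holds a.e.\ in $\Omega$.

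Second, the barycentric constraint $\sum_i c_i u_i = 0$ reads $\sum_i \sqrt{c_i}\,Y_i = 0$ componentwise in $\R^d$, so $\bm Y \in L$ and $P_L\bm Y = \bm Y$. Testing \eqref{1.eqA} against $Y_i$ and summing over $i$ yields
$$
\bm Y^T A(\bm c)\bm Y \,=\, -2\sum_{i=1}^n Y_i\cdot\na\sqrt{c_i} \,=\, -2\sum_{i=1}^n \sqrt{c_i}\,u_i\cdot\na\sqrt{c_i}.
$$
On the one hand, Lemma~\ref{lem.A} gives the lower bound $\bm Y^T A(\bm c)\bm Y \ge \mu\,|\bm Y|^2 = \mu\sum_i c_i|u_i|^2$ with $\mu = \min_{i\neq j}(1/D_{ij})$; on the other hand, Cauchy--Schwarz yields
$$
\bigl|\bm Y^T A(\bm c)\bm Y\bigr| \,\le\, 2\Bigl(\sum_{i=1}^n c_i|u_i|^2\Bigr)^{1/2}\Bigl(\sum_{i=1}^n|\na\sqrt{c_i}|^2\Bigr)^{1/2}.
$$
Combining the two inequalities and dividing through by $(\sum_i c_i|u_i|^2)^{1/2}$ produces the claim with constant $C = 4/\mu^2$.

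The only delicate step is securing \eqref{1.eqA} on the vacuum set $\{c_i=0\}$; once it holds almost everywhere, the proof is a one-line consequence of the spectral-gap property on the subspace $L$, and no further structural information about $A(\bm c)$ is required.
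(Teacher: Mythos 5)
Your proof is correct and arrives at the same constant $C=4/\mu^2$ as the paper, but it takes a mildly different computational route. The paper's own proof squares the force--flux relation: it writes $4\sum_i|\na\sqrt{c_i}|^2=\bm Y^T A(\bm c)^2\bm Y$ (using symmetry of $A$), then invokes the fact that the nonzero eigenvalues of $A(\bm c)^2$ are the squares of those of $A(\bm c)$, so $\bm Y^T A(\bm c)^2\bm Y\ge\mu^2|P_L\bm Y|^2=\mu^2\sum_i c_i|u_i|^2$. You instead pair the equation with $\bm Y$ itself, bound $\bm Y^T A(\bm c)\bm Y$ from below via \eqref{2.A} and from above via Cauchy--Schwarz, and cancel one factor of $|\bm Y|$. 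The two arguments rely on exactly the same spectral-gap Lemma \ref{lem.A} and the same observation that $\bm Y\in L$ so $P_L\bm Y=\bm Y$; yours trades the short ``eigenvalues of $A^2$'' step for a Cauchy--Schwarz step, which is a matter of taste. Your extra remark that \eqref{1.eqA} holds a.e.\ on the vacuum set $\{c_i=0\}$ (both sides vanish there, since $\na\sqrt{c_i}=0$ a.e.\ and $A_{ij}(\bm c)\sqrt{c_j}u_j$ vanishes in the $i$-th component when $c_i=0$) is a small point of rigor the paper's proof leaves implicit; it is correct and harmless.
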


\begin{proof}
It follows from \eqref{1.eqA} and the symmetry of $A(\bm{c})$, defined in \eqref{1.A},
that
$$
  4\sum_{i=1}^n|\na\sqrt{c_i}|^2 = \sum_{i=1}^n\bigg|\sum_{j=1}^n A_{ij}(\bm{c})
	\sqrt{c_j}u_j\bigg|^2 = \sum_{i,j=1}^n \sqrt{c_i}u_i(A(\bm{c})^2)_{ij}
	\sqrt{c_j}u_j.
$$
Since the eigenvalues of $A(\bm{c})^2$ are the square of the eigenvalues of
$A(\bm{c})$, we deduce from \eqref{2.A} that
$\bm{z}^T A(\bm{c})^2\bm{z} \ge \mu^2|P_L\bm{z}|^2$ for all $\bm{z}\in\R^n$.
This yields
$$
  4\sum_{i=1}^n|\na\sqrt{c_i}|^2 \ge \mu^2|P_L(\sqrt{c_i}u_i)_{i=1,\ldots,n}|^2.
$$
Because of $\sum_{i=1}^n\sqrt{c_i}(\sqrt{c_i}u_i)=0$, we have $(\sqrt{c_i}u_i)_i\in L$
and hence, $P_L(\sqrt{c_i}u_i)_i=(\sqrt{c_i}u_i)_i$. The statement of the lemma
follows after setting $C=4/\mu^2$.
\end{proof}


\subsection{Relative entropy inequality}\label{sec.relent}

We first derive a relative entropy inequality via a formal computation.
Using \eqref{1.relent} and \eqref{1.eq}, we obtain
\begin{align}\label{2.aux0}
  \frac{d}{dt}H(\bm{c}|\bar{\bm{c}})
	&= \sum_{i=1}^n\int_\Omega\bigg(\log\frac{c_i}{\bar{c}_i}\pa_t c_i
	+ \bigg(1-\frac{c_i}{\bar{c}_i}\bigg)\pa_t\bar{c}_i\bigg)dx \\
	&= \sum_{i=1}^n\int_\Omega\bigg(\na\log\frac{c_i}{\bar{c}_i}\cdot (c_iu_i)
	- \na\log\frac{c_i}{\bar{c}_i}\cdot(c_i\bar{u}_i)\bigg)dx \nonumber \\
	&= \sum_{i=1}^n\int_\Omega c_i\na(\log c_i-\log\bar{c}_i)\cdot(u_i-\bar{u}_i)dx.
	\nonumber
\end{align}
To reformulate the integrand of the right-hand side, we insert the second equation
of \eqref{1.eq}, and use the symmetry of $(D_{ij})$:
\begin{align}
   \sum_{i=1}^n &c_i\na(\log c_i-\log\bar{c}_i)\cdot(u_i-\bar{u}_i)
	= -\sum_{i=1}^n c_i(u_i-\bar{u}_i)\cdot\sum_{j\neq i}\frac{1}{D_{ij}}
	\big(c_j(u_i-u_j) - \bar{c}_j(\bar{u}_i-\bar{u}_j)\big) \nonumber \\
	&= -\sum_{i=1}^nc_i(u_i-\bar{u}_i)\cdot\sum_{j\neq i}\frac{c_j}{D_{ij}}
	\big((u_i-\bar{u}_i)-(u_j-\bar{u}_j)\big)  \nonumber \\
	&\phantom{xx}{}
	- \sum_{i,j=1,\,i\neq j}^n\frac{1}{D_{ij}}c_i(u_i-\bar{u}_i)\cdot\big((c_j-\bar{c}_j)
	(\bar{u}_i-\bar{u}_j)\big) \label{3.ed} \\
	&= -\sum_{i,j=1,\,i\neq j}^n\frac{c_ic_j}{2D_{ij}}\big|(u_i-\bar{u}_i)
	-(u_j-\bar{u}_j)\big|^2
	- \sum_{i,j=1,\,i\neq j}^n\frac{c_i}{D_{ij}}
	(c_j-\bar{c}_j)(u_i-\bar{u}_i)\cdot(\bar{u}_i-\bar{u}_j). \nonumber
\end{align}
This shows that
\begin{align}\label{2.aux1}
  \frac{d}{dt}H&(\bm{c}|\bar{\bm{c}})
	+ \frac12\sum_{i,j=1,\,i\neq j}^n\int_\Omega\frac{c_ic_j}{D_{ij}}\big|(u_i-\bar{u}_i)
	-(u_j-\bar{u}_j)\big|^2 dx \\
	&{}= -\sum_{i,j=1,\,i\neq j}^n\int_\Omega\frac{c_i}{D_{ij}}
	(c_j-\bar{c}_j)(u_i-\bar{u}_i)\cdot(\bar{u}_i-\bar{u}_j)dx. \nonumber
\end{align}

Our aim is to make this computation rigorous. Since the computation in 
\eqref{3.ed} is purely algebraic, it holds without any regularity restrictions.
In principle, one would expect that \eqref{2.aux1} holds under the condition
that all the terms are well defined, which would cover the class of weak solutions
subject to the condition $u_i\in L^\infty$ (to ensure integrability of the right-hand
side). However, we have not been able to establish \eqref{2.aux0} for such a
class of solutions, and stricter conditions on one of the solutions are
required.

\begin{lemma}\label{lem.relent}
Let $\bm{c}$ be a weak solution to \eqref{1.eq}--\eqref{1.bic} and let $\bar{\bm{c}}$
be a strong solution to \eqref{1.eq}--\eqref{1.bic} satisfying $0<\bar{c}_i(t)<1$
in $\Omega$, the regularity
$$
  \log\bar{c}_i\in L_{\rm loc}^2(0,\infty;H^1(\Omega)), \
	\pa_t\log\bar{c}_i\in L_{\rm loc}^2(\Omega\times(0,\infty)), \
	\bar{u}_i\in L_{\rm loc}^\infty(0,\infty;L^\infty(\Omega)),
$$
and the entropy identity
\begin{equation}\label{2.entid}
  H(\bar{\bm{c}}(t)) + \sum_{i,j=1}^n\frac12\int_0^t\int_\Omega
	\frac{\bar{c}_i\bar{c}_j}{D_{ij}}|u_i-\bar{u}_i|^2 dxds = H(\bar{\bm{c}}^0)
	\quad\mbox{for }t>0.
\end{equation}
Then
\begin{align}\nonumber
  H(\bm{c}(t)&|\bar{\bm{c}}(t)) + \sum_{i,j=1}^n\int_0^t\int_\Omega
	A_{ij}(\bm{c}) (\sqrt{c_i}(u_i-\bar{u}_i))\cdot(\sqrt{c_j}(u_j-\bar{u}_j)) dxds \\
	&{}\le H(\bm{c}^0|\bar{\bm{c}}^0)
	 -\sum_{i,j=1,\,i\neq j}^n   \int_0^t   \int_\Omega\frac{c_i}{D_{ij}}
	(c_j-\bar{c}_j)(u_i-\bar{u}_i)\cdot(\bar{u}_i-\bar{u}_j)dx ds.
\label{3.Hrel}
\end{align}
\end{lemma}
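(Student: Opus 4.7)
\textit{Proof plan.}

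The plan is to turn the formal identity \eqref{2.aux1} into a rigorous one-sided bound, with the sole inequality coming from the entropy inequality satisfied by the weak solution. Because $\log c_i$ is undefined where $c_i=0$, direct differentiation of $H(\bm{c}|\bar{\bm{c}})$ along $\bm c$ is blocked, so I would use $\sum_i c_i=\sum_i\bar c_i=1$ to split
$$
  H(\bm{c}|\bar{\bm{c}}) = H(\bm c)-H(\bar{\bm c})-\sum_{i=1}^n\int_\Omega(c_i-\bar c_i)\log\bar c_i\,dx,
$$
isolating the rough piece $H(\bm c)$ from a cross term in which only the regular factor $\log\bar c_i$ is differentiated. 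Evaluating at $t$ and $0$ and applying the entropy inequality \eqref{1.entropy} for $H(\bm c)$ (the only inequality in the argument) and the entropy identity \eqref{2.entid} for $H(\bar{\bm c})$ produces the frictional dissipations in $u_i-u_j$ and $\bar u_i-\bar u_j$ with the correct signs.

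The core step is to rewrite the cross term via test functions. Using $\log\bar c_i$ as test function in the weak formulation for $c_i$ and in the analogous identity for $\bar c_i$, then subtracting, would give
$$
 \sum_i\int_\Omega(c_i-\bar c_i)\log\bar c_i\,dx\Big|_0^t = \sum_i\int_0^t\int_\Omega\big[(c_i-\bar c_i)\pa_t\log\bar c_i+(c_iu_i-\bar c_i\bar u_i)\cdot\na\log\bar c_i\big]\,dx\,ds.
$$
Since the paper's admissible test class is $C^1$ while $\log\bar c_i$ is only $L^2_{\rm loc}(H^1)$ with $\pa_t\log\bar c_i\in L^2_{\rm loc}$, I would regularize $\log\bar c_i$ by a space-time mollification that preserves the no-flux condition (inherited from $\bar c_i>0$ and $\na\bar c_i\cdot\nu=0$) and pass to the limit. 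The limit is legitimate because $c_i\le 1$ and $c_iu_i\in L^2_{\rm loc}$ (Lemma~\ref{lem.cu2} gives $\sqrt{c_i}u_i\in L^2_{\rm loc}$ and $\sqrt{c_i}\le 1$), while $\bar u_i\in L^\infty_{\rm loc}$ controls the strong-solution side.

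To close the calculation, I would substitute the strong force-flux $\na\log\bar c_i=-\sum_j(\bar c_j/D_{ij})(\bar u_i-\bar u_j)$ and the mass balance $\pa_t\log\bar c_i=-\diver(\bar c_i\bar u_i)/\bar c_i$ into the cross term (with a spatial integration by parts that avoids differentiating $\bar u_i$), combine with the frictional dissipations from the splitting step, and perform the purely algebraic rearrangement of \eqref{3.ed}. This reduces the total to the friction dissipation $-\tfrac12\sum_{i\ne j}\int_0^t\int_\Omega\tfrac{c_ic_j}{D_{ij}}|(u_i-\bar u_i)-(u_j-\bar u_j)|^2\,dx\,ds$ plus the right-hand side of \eqref{3.Hrel}, and rewriting the dissipation via \eqref{1.fricdiss} produces the quadratic form $\sum_{i,j}A_{ij}(\bm c)\sqrt{c_i}(u_i-\bar u_i)\cdot\sqrt{c_j}(u_j-\bar u_j)$ on the left of \eqref{3.Hrel}. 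The main obstacle is the test-function substitution: justifying the extension of the weak formulation to Sobolev-regular tests and the passage to the limit in the nonlinear pairing $c_iu_i\cdot\na\log\bar c_i$, for which the regularity hypotheses on $\log\bar c_i$, $\pa_t\log\bar c_i$, and $\bar u_i$ are tuned precisely.
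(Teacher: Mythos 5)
Your proposal follows essentially the same route as the paper: the same decomposition $H(\bm{c}|\bar{\bm{c}}) = H(\bm{c}) - H(\bar{\bm{c}}) - \sum_i\int_\Omega(c_i-\bar{c}_i)\log\bar{c}_i\,dx$, the same use of the entropy inequality for the weak solution as the sole inequality, the same choice of $\log\bar{c}_i$ as a test function extended from $C^1$ to Sobolev-regular functions by density, the same substitution of the mass balance and force-flux relations for the strong solution, and the same algebraic rearrangement \eqref{3.ed} combined with \eqref{1.fricdiss}. The one technical point the paper makes explicit that you glide over is how to give meaning to the product $c_i\na\log(c_i/\bar{c}_i)$ where $c_i$ can vanish (the paper defines it as $\sqrt{c_i}(2\na\sqrt{c_i}-\sqrt{c_i}\na\log\bar{c}_i)$, well-defined in $L^2$ by the bound $\na\sqrt{c_i}\in L^2_{\rm loc}$), but this is a minor completion of the same argument rather than a different approach.
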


\begin{proof}
Since
$$
  H(\bm{c}|\bar{\bm{c}}) = H(\bm{c}) - H(\bar{\bm{c}}) 
	- \int_{\Omega} \sum_{i=1}^n(c_i-\bar{c}_i)\log\bar{c}_i dx,
$$
we need to formulate the time evolution of each of these terms.
According to Theorem \ref{thm.weak}, the weak solution $\bm{c}$ satisfies
$\na\sqrt{c_i}$, $\sqrt{c_i}u_i\in L_{\rm loc}^2(0,\infty;L^2(\Omega))$ and
$$
  H(\bm{c}(t)) + \frac12\sum_{i,j=1}^n\int_0^t\int_\Omega
	\frac{c_ic_j}{D_{ij}}|u_i-u_j|^2 dxds \le H(\bm{c}^0)\quad\mbox{for }t>0.
$$
The symmetry of $(D_{ij})$ and the force-flux relations in \eqref{1.eq}  give
$$
  \sum_{i=1}^n c_iu_i\cdot\na\log c_i
	= -\sum_{i,j=1}^n c_iu_i\cdot\frac{c_j}{D_{ij}}(u_i-u_j)
	= -\frac12\sum_{i,j=1}^n\frac{c_ic_j}{D_{ij}}|u_i-u_j|^2 , 
$$
and we formulate the  entropy inequality as
\begin{equation}\label{3.Hc}
  H(\bm{c}(t))  - H(\bm{c}^0)  \le   \sum_{i=1}^n\int_0^t\int_\Omega c_iu_i\cdot\na\log c_i dxds
	 \quad\mbox{for }t>0.
\end{equation}
The expression $c_iu_i\cdot\na\log c_i$ has to be understood
as $2\na\sqrt{c_i}\cdot(\sqrt{c_i}u_i)$, which is well defined since
$\na\sqrt{c_i}$, $\sqrt{c_i}u_i\in L^2(\Omega\times(0,T))$ (see Lemma \ref{lem.cu2}).
In a similar way, we express the entropy identity \eqref{2.entid} as
\begin{equation}\label{3.Hbarc}
  H(\bar{\bm{c}}(t))  -  H(\bar{\bm{c}}^0) = \sum_{i=1}^n\int_0^t\int_\Omega \bar{c}_i\bar{u}_i\cdot \na\log \bar{c}_i dxds
	\quad\mbox{for }t>0.
\end{equation}

Next, the difference of the weak formulations for $\bm{c}$ and $\bar{\bm{c}}$ gives
\begin{align*}
  \int_\Omega& (c_i-\bar{c}_i)(t)\phi_i(t) dx 
	- \int_\Omega(c_{i}^0-\bar{c}_{i}^0)\phi_i (0)dx \\
	&= \int_0^t\int_\Omega(c_i-\bar{c}_i)\pa_t\phi_i dxds 
	+\int_0^t\int_\Omega(c_iu_i-\bar{c}_i\bar{u}_i)\cdot\na\phi_i dxds
\end{align*}
for test functions $\phi_i\in C_{\rm loc}^1([0,\infty);C^1(\overline\Omega))$.
Using a density argument we see that the test function
$\phi_i$ can be taken in the class $H^1(\Omega\times(0,T))$ for $T > 0$, 
in which case $\phi_i (t)$, $\phi_i(0)$ are well defined by the trace theorem.
Selecting $\phi_i=\log\bar{c}_i$, we obtain
\begin{align}\label{3.aux1}
  \int_\Omega & (c_i-\bar{c}_i)(t)\log\bar{c}_i(t) dx 
	- \int_\Omega(c_{i}^0-\bar{c}_{i}^0)\log\bar{c}_{i}^0dx \\
	&= \int_0^t\int_\Omega(c_i-\bar{c}_i)\frac{\pa_t\bar{c}_i}{\bar{c}_i} dxds 
	+ \int_0^t\int_\Omega(c_iu_i-\bar{c}_i\bar{u}_i)\cdot\na\log\bar{c}_i dxds. 
	\nonumber
\end{align}
Taking into account the regularity properties of $\bar{c}_i$, we insert
$\pa_t\bar{c}_i=-\diver(\bar{c}_i\bar{u}_i)$ in the third term and integrate by parts:
\begin{equation*}
  \int_0^t\int_\Omega(c_i-\bar{c}_i)\frac{\pa_t\bar{c}_i}{\bar{c}_i} dxds
	= \int_0^t\int_\Omega\na\bigg(\frac{c_i}{\bar{c}_i}\bigg)
	\cdot(\bar{c}_i\bar{u}_i)dxds.
\end{equation*}
We wish to write the integrand on the right-hand side as 
$$
  \na\bigg(\frac{c_i}{\bar{c}_i}\bigg)\cdot(\bar{c}_i\bar{u}_i)
	= \bigg(\na c_i - \frac{c_i}{\bar{c}_i}\na\bar{c}_i\bigg)\cdot\bar{u}_i
	= c_i\na\log\bigg(\frac{c_i}{\bar{c}_i}\bigg)\cdot\bar{u}_i.
$$
Since $c_i\ge 0$ only, the expression $\log c_i$ may be not integrable.
Therefore, we define
$$
  \na\log\bigg(\frac{c_i}{\bar{c}_i}\bigg) 
	:= \frac{1}{\sqrt{c_i}}(2\na\sqrt{c_i} - \sqrt{c_i}\na\log\bar{c}_i)
	\quad\mbox{if }c_i>0
$$
as the product of two functions and $\na\log(c_i/\bar{c}_i)$ arbitrary if $c_i=0$.
Although this product may be not integrable, the expression
$c_i\na\log(c_i/\bar{c}_i)$ lies in $L^2(\Omega\times(0,T))$ and consequently,
$c_i\na\log(c_i/\bar{c}_i)\cdot\bar{u}_i$ lies in the same space. Therefore,
we can formulate \eqref{3.aux1} as
\begin{align}\label{3.aux2}
  \int_\Omega & (c_i-\bar{c}_i)(t)\log\bar{c}_i(t) dx 
	- \int_\Omega(c_{i}^0-\bar{c}_{i}^0)\log\bar{c}_{i}^0dx \\
	&= \int_0^t\int_\Omega c_i\na\log\bigg(\frac{c_i}{\bar{c}_i}\bigg)
	\cdot\bar{u}_i dxds
	+ \int_0^t\int_\Omega(c_iu_i-\bar{c}_i\bar{u}_i)\cdot\na\log\bar{c}_i dxds.
	\nonumber
\end{align}
Subtracting \eqref{3.Hbarc} and \eqref{3.aux2} from \eqref{3.Hc} leads to
\begin{equation}\label{3.aux21}
  H(\bm{c}(t)|\bar{\bm{c}}(t)) - H(\bm{c}^0|\bar{\bm{c}}^0)
	\le \sum_{i=1}^n\int_0^t\int_\Omega c_i\na\log\bigg(\frac{c_i}{\bar{c}_i}\bigg)
	\cdot(u_i-\bar{u}_i) dxds.
\end{equation}
Finally, using \eqref{3.ed} and the form \eqref{1.fricdiss} of the friction, 
we obtain \eqref{3.Hrel}. 
\end{proof}


\subsection{Proof of Theorem \ref{thm.ws}}

We proceed to estimate \eqref{3.Hrel}. We set
$\bm{Y}=(Y_1,\ldots,Y_n)$ with $Y_i=\sqrt{c_i}(u_i-\bar{u}_i)$, $i=1,\ldots,n$.
Then, using \eqref{2.A}, we have 
\begin{equation}\label{3.YAY}
  \sum_{i,j=1}^n A_{ij}(\bm{c})(\sqrt{c_i}(u_i-\bar{u}_i))\cdot(\sqrt{c_j}
	(u_j-\bar{u}_j)) = \bm{Y}^T A(\bm{c})\bm{Y} \ge \mu|P_L\bm{Y}|^2.
\end{equation}
It follows from the constraints $\sum_{i=1}^n c_iu_i
=\sum_{i=1}^n \bar{c}_i\bar{u}_i=0$ that
\begin{align}
  (P_{L^\perp} \bm{Y})_i &=  \sum_{j=1}^n \sqrt{c_i} c_j (u_j -\bar{u}_j ) 
	= \sqrt{c_i} \sum_{j=1}^n (\bar{c}_j - c_j)\bar{u}_j, \nonumber \\
  |P_L \bm{Y}|^2 &= | \bm{Y}|^2 - |P_{L^\perp} \bm{Y}|^2
  = \sum_{i=1}^n c_i|u_i-\bar{u}_i|^2 - \sum_{i=1}^n c_i 
	\bigg|\sum_{j=1}^n(c_j-\bar{c}_j)\bar{u}_j\bigg|^2 \nonumber \\ 
	&\ge \sum_{i=1}^n c_i|u_i-\bar{u}_i|^2
	- n \| \bar{u} \|_{L^\infty} \sum_{j=1}^n(c_j-\bar{c}_j)^2, \nonumber
\end{align}
where $\|\bar{u}\|_{L^\infty} := \max_{j=1,\ldots,n}\|u_j\|_{L^\infty
(\Omega\times(0,T))}$, and we used $\sum_{i=1}^n c_i=1$.

We turn to the last term in \eqref{3.Hrel}, which is estimated
\begin{align}
\label{3.aux12}
  \bigg|\int_0^t&\int_\Omega\sum_{i,j=1,\,i\neq j}^n  \frac{c_i}{D_{ij}}
	(c_j-\bar{c}_j)(u_i-\bar{u}_i)\cdot(\bar{u}_i-\bar{u}_j)dx ds\bigg| \\
  &\le \int_0^t\int_\Omega \sum_{i=1}^n \big(\sqrt{c_i} (u_i - \bar{u}_i\big) 
	\bigg(\sqrt{c_i}\sum_{j=1}^n\frac{|c_j - \bar{c}_j|}{D_{ij}}|\bar{u}_i - \bar{u}_j| 
	\bigg) dxds \nonumber \\
  &\le \frac{2\|\bar{u}\|_{L^\infty}}{\min_{i\neq j}D_{ij}}\int_0^t\int_\Omega  
	\bigg(\sum_{i=1}^n c_i|u_i - \bar{u}_i |^2\bigg)^{1/2} 
  \bigg(n\sum_{j=1}^n|c_j - \bar{c}_j|^2\bigg)^{1/2} dxds \nonumber \\
  &\le \frac{\mu}{2} \int_0^t\int_\Omega  \sum_{i=1}^n c_i|u_i-\bar{u}_i|^2 dxds
	+ C(\mu) \int_0^t\int_\Omega \sum_{i=1}^n (c_i-\bar{c}_i)^2dxds, \nonumber
\end{align}
where the constant $C(\mu)>0$ also depends on $\min_{i\neq j} D_{ij}$ and 
$\|\bar{u}\|_{L^\infty}$.
Inserting \eqref{3.YAY}--\eqref{3.aux12} into \eqref{3.Hrel} and taking
into account Lemma \ref{lem.ent} in Appendix \ref{app.ent}, we find that
\begin{align*}
  H(&\bm{c}(t)|\bar{\bm{c}}(t)) + \frac{\mu}{2}\sum_{i=1}^n\int_0^t\int_\Omega
	c_i|u_i-\bar{u}_i|^2 dxds \\
	&\le H(\bm{c}^0|\bar{\bm{c}}^0) + C(\mu)\int_0^t\int_\Omega \sum_{i=1}^n 
	(c_i-\bar{c}_i)^2dxds
	\le H(\bm{c}^0|\bar{\bm{c}}^0) + 2C(\mu) \int_0^t H(\bm{c}|\bar{\bm{c}})ds,
\end{align*}
and an application of Gronwall's lemma finishes the proof. 

\subsection{Proof of Corollary \ref{coro}}

In this section, we express the relative entropy via the Bott--Duffin inverse 
$A^{BD}(\bm{c})$. 

\begin{lemma}\label{lem.relent2}
Let the assumptions of Lemma \ref{lem.relent} hold with $\bar{u}_i\in 
L^\infty_{\rm loc}(0,\infty;L^\infty(\Omega))$ replaced by
$\sqrt{\bar{c}_i}\in L_{\rm loc}^\infty(0,\infty;W^{1,\infty}(\Omega))$. Then,
setting $\bm{Z}=(Z_1,\ldots,Z_n)$ with 
$Z_i=\na\sqrt{c_i}-(\sqrt{c_i}/\sqrt{\bar{c}_i})\na\sqrt{\bar{c}_i}$ 
for $i=1,\ldots,n$,
\begin{align}\label{3.Hrel2}
  H(&\bm{c}(t)|\bar{\bm{c}}(t)) + 4\sum_{i,j=1}^n\int_0^t\int_\Omega
	A_{ij}^{BD}(\bm{c})Z_i\cdot Z_j dxds \\
	&\le H(\bm{c}^0|\bar{\bm{c}}^0) + 4\sum_{i,j=1}^n\int_0^t\int_\Omega
	Z_i\cdot\na\sqrt{\bar{c}_j}\bigg(
	\frac{\sqrt{c_i}}{\sqrt{\bar{c}_i}}A_{ij}^{BD}(\bar{\bm{c}})
	- A_{ij}^{BD}(\bm{c})\frac{\sqrt{c_j}}{\sqrt{\bar{c}_j}}\bigg)dxds. 
	\nonumber
\end{align} 
\end{lemma}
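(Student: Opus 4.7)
The plan is to avoid repeating the weak formulation arguments already carried out in the proof of Lemma \ref{lem.relent} and instead to start from the intermediate bound
\[
  H(\bm{c}(t)|\bar{\bm{c}}(t)) - H(\bm{c}^0|\bar{\bm{c}}^0) \le \sum_{i=1}^n\int_0^t\int_\Omega c_i\na\log\bigg(\frac{c_i}{\bar{c}_i}\bigg)\cdot(u_i-\bar{u}_i)\,dxds,
\]
which is exactly \eqref{3.aux21}. Its derivation combines the entropy inequality for $\bm{c}$, the entropy identity \eqref{2.entid} for $\bar{\bm{c}}$, and the weak formulation of the equation for $c_i-\bar{c}_i$ tested with $\phi_i=\log\bar{c}_i$. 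All three ingredients remain available under the present hypotheses: the only place where the pointwise bound on $\bar{u}_i$ was used in Lemma \ref{lem.relent} was to make $(c_iu_i-\bar{c}_i\bar{u}_i)\cdot\na\log\bar{c}_i$ integrable, and this is guaranteed here by the Bott--Duffin representation $\sqrt{\bar{c}_i}\bar{u}_i=-2\sum_j A_{ij}^{BD}(\bar{\bm{c}})\na\sqrt{\bar{c}_j}$ together with $\na\sqrt{\bar{c}_j}\in L^\infty$ and positivity of $\bar{c}_i$.

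Next I would rewrite the integrand using three elementary steps. First, a direct chain-rule computation shows pointwise that
\[
  c_i\na\log\bigg(\frac{c_i}{\bar{c}_i}\bigg) = 2\sqrt{c_i}\,Z_i,
  \qquad
  \sqrt{c_i}\,(u_i-\bar{u}_i) = \sqrt{c_i}\,u_i - \frac{\sqrt{c_i}}{\sqrt{\bar{c}_i}}\sqrt{\bar{c}_i}\,\bar{u}_i.
\]
Second, formula \eqref{1.eqABD} applied to both $\bm{c}$ and $\bar{\bm{c}}$ yields $\sqrt{c_i}u_i=-2\sum_j A_{ij}^{BD}(\bm{c})\na\sqrt{c_j}$ and $\sqrt{\bar{c}_i}\bar{u}_i=-2\sum_j A_{ij}^{BD}(\bar{\bm{c}})\na\sqrt{\bar{c}_j}$. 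Third, the decomposition $\na\sqrt{c_j}=Z_j+(\sqrt{c_j}/\sqrt{\bar{c}_j})\na\sqrt{\bar{c}_j}$ separates the diagonal quadratic form in $Z$ from cross terms involving $\na\sqrt{\bar{c}_j}$. Assembling the three steps produces the pointwise identity
\begin{align*}
  c_i\na\log\bigg(\frac{c_i}{\bar{c}_i}\bigg)\cdot(u_i-\bar{u}_i)
  &= -4\sum_{j=1}^n A_{ij}^{BD}(\bm{c})\,Z_i\cdot Z_j \\
  &\quad {}+ 4\sum_{j=1}^n Z_i\cdot\na\sqrt{\bar{c}_j}\bigg(\frac{\sqrt{c_i}}{\sqrt{\bar{c}_i}}A_{ij}^{BD}(\bar{\bm{c}}) - A_{ij}^{BD}(\bm{c})\frac{\sqrt{c_j}}{\sqrt{\bar{c}_j}}\bigg),
\end{align*}
and summation over $i$, integration in space-time, and transfer of the diagonal term to the left-hand side give precisely \eqref{3.Hrel2}.

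The main obstacle is to justify that every product appearing in this rewriting is well defined as an $L^1$ function on $\Omega\times(0,T)$, since $c_i$ may vanish and the ratios $\sqrt{c_i}/\sqrt{\bar{c}_i}$ together with $Z_i$ itself only make sense when $\bar{c}_i$ stays away from zero. This is supplied by the setting in which this lemma is to be applied, namely Corollary \ref{coro}, where $\bar{c}_i\ge m>0$. Under that lower bound, $\sqrt{c_i}/\sqrt{\bar{c}_i}\in L^\infty$ (using also $c_i\le 1$), $\na\sqrt{\bar{c}_j}\in L^\infty$ by assumption, $\na\sqrt{c_i}\in L^2_{\rm loc}$ by the weak solution framework, so that $Z_i\in L^2_{\rm loc}$, and the entries of $A^{BD}(\bm{c})$ and $A^{BD}(\bar{\bm{c}})$ are uniformly bounded by continuity of the Bott--Duffin inversion on the compact simplex $\{\bm{c}\in[0,1]^n:\sum_i c_i=1\}$ together with Lemma \ref{lem.A}. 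Consequently every product above lies in $L^1(\Omega\times(0,T))$ and the passage from \eqref{3.aux21} to \eqref{3.Hrel2} is rigorous.
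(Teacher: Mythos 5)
Your argument is correct and essentially identical to the paper's: both start from the intermediate bound \eqref{3.aux21} and rewrite the integrand pointwise using the Bott--Duffin representation \eqref{1.eqABD} for $\sqrt{c_i}u_i$ and $\sqrt{\bar{c}_i}\bar{u}_i$, together with the decomposition $\na\sqrt{c_j}=Z_j+(\sqrt{c_j}/\sqrt{\bar{c}_j})\na\sqrt{\bar{c}_j}$, then move the quadratic $Z$-term to the left-hand side. Your integrability discussion invokes the uniform lower bound $\bar{c}_i\ge m$ from Corollary \ref{coro} rather than the lemma's own weaker hypothesis $0<\bar{c}_i<1$, and misattributes the role of $\bar u_i\in L^\infty$ in Lemma \ref{lem.relent} to the term $(c_iu_i-\bar c_i\bar u_i)\cdot\na\log\bar c_i$ instead of $c_i\na\log(c_i/\bar c_i)\cdot\bar u_i$, but the paper's own proof leaves this bookkeeping implicit as well, so the substantive derivation matches.
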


\begin{proof}
Starting with the relative entropy inequality in the form \eqref{3.aux21}, 
we express its right-hand side by using \eqref{1.eqABD}: 
\begin{align*}
  \sum_{i=1}^n  &c_i\na(\log c_i-\log\bar{c}_i)\cdot(u_i-\bar{u}_i) \\
  &=
  - \sum_{i=1}^n \bigg( \nabla c_i - \frac{c_i}{\bar{c}_i} \nabla \bar{c}_i \bigg) 
	\cdot \sum_{j=1}^n \bigg( \frac{1}{\sqrt{c_i}}  A_{ij}^{BD} (\bm{c}) 
	\frac{1}{\sqrt{c_j}} \nabla c_j - \frac{1}{\sqrt{\bar{c}_i}}  
	A_{ij}^{BD} (\bar{\bm{c}})  \frac{1}{\sqrt{\bar{c}_j}} \nabla \bar{c}_j\bigg) \\
  &=
  -4 \sum_{i,j=1}^n Z_i \cdot A_{ij}^{BD} (\bm{c})  Z_j - 4 \sum_{i,j=1}^n 
	Z_i \cdot \bigg(A_{ij}^{BD} (\bm{c})\sqrt{ \frac{c_j}{\bar{c}_j}} 
	-  A_{ij}^{BD} (\bar{\bm{c}}) \sqrt{\frac{c_i}{\bar{c}_i}} \bigg)  
	\nabla \sqrt{\bar{c}_j},
\end{align*}
which gives \eqref{3.Hrel2}. 
\end{proof}

We continue with the proof of Corollary \ref{coro}. We estimate the two integrals
of the relative entropy inequality \eqref{3.Hrel2}. The integrand of the
second term is estimated, because of \eqref{2.ABD}, as
\begin{equation}\label{3.aux3}
  \sum_{i,j=1}^n A_{ij}^{BD}(\bar{c})Z_i\cdot Z_j \ge \lambda|P_L\bm{Z}|^2.
\end{equation}
The definitions of $P_L$ and $\bm{Z}$ yield
\begin{align*}
  (P_L\bm{Z})_i &= \bigg(\na\sqrt{c_i}-\frac{\sqrt{c_i}}{\sqrt{\bar{c}_i}}
	\na\sqrt{\bar{c}_i}\bigg)
	- \sum_{j=1}^n\sqrt{c_ic_j}\bigg(\na\sqrt{c_j}-\frac{\sqrt{c_j}}{\sqrt{\bar{c}_j}}
	\na\sqrt{\bar{c}_j} \bigg) \\
	&= \na(\sqrt{c_i}-\sqrt{\bar{c}_i}) 
	+ \frac{\sqrt{\bar{c}_i}-\sqrt{c_i}}{\sqrt{\bar{c}_i}}\na\sqrt{\bar{c}_i}
	- \sqrt{c_i}\sum_{j=1}^n\frac{(\sqrt{\bar{c}_j})^2-(\sqrt{c_j})^2}{\sqrt{\bar{c}_j}}
	\na\sqrt{\bar{c}_j}. 
\end{align*}
Using Young's inequality $(A+B+C)^2 \ge A^2/2 - 4B^2 - 4C^2$ and the
bounds $\bar{c}_i\ge m$ and $\sqrt{c_i}+\sqrt{\bar{c}_i}\le 2$, we infer that
\begin{align*}
  |P_L\bm{Z}|^2 &\ge \sum_{i=1}^n\bigg(\frac12|\na(\sqrt{c_i}-\sqrt{\bar{c}_i})|^2
	- 4\bigg|\frac{\sqrt{\bar{c}_i}-\sqrt{c_i}}{\sqrt{\bar{c}_i}}\na\sqrt{\bar{c}_i}
	\bigg|^2 \\
	&\phantom{xx}{}- 4\bigg|\sqrt{c_i}\sum_{j=1}^n\frac{(\sqrt{\bar{c}_j})^2
	-(\sqrt{c_j})^2}{\sqrt{\bar{c}_j}}\na\sqrt{\bar{c}_j}\bigg|^2\bigg) \\
	&\ge \frac12\sum_{i=1}^n|\na(\sqrt{c_i}-\sqrt{\bar{c}_i})|^2
	- \frac{4(n+1)}{m}\sum_{i=1}^n(\sqrt{c_i}-\sqrt{\bar{c}_i})^2|\na\sqrt{\bar{c}_i}|^2.
\end{align*}
With this estimate, \eqref{3.aux3} becomes, after integration over $\Omega\times(0,t)$,
\begin{align}\label{3.T1}
  4&\sum_{i,j=1}^n\int_0^t\int_\Omega A_{ij}^{BD}(\bar{c})Z_i\cdot Z_j dxds
	\ge 2\lambda\sum_{i=1}^n\int_0^t\int_\Omega|\na(\sqrt{c_i}-\sqrt{\bar{c}_i})|^2 
	dxds \\
	&\phantom{xx}{}- \frac{16(n+1)}{m}\max_{j=1,\ldots,n}\|\na\sqrt{\bar{c}_j}\|_{
	L^\infty(\Omega\times(0,T))}^2\sum_{i=1}^n\int_0^t\int_\Omega
	(\sqrt{c_i}-\sqrt{\bar{c}_i})^2 dxds. \nonumber 
\end{align}

Next, we consider the last integral in \eqref{3.Hrel2}.
By Young's inequality,
\begin{align}\label{3.aux4}
  4&\sum_{i,j=1}^n\int_0^t\int_\Omega
	Z_i\cdot\na\sqrt{\bar{c}_j}\bigg(\frac{\sqrt{c_i}}{\sqrt{\bar{c}_i}}
	A_{ij}^{BD}(\bar{\bm{c}})
	- A_{ij}^{BD}(\bm{c})\frac{\sqrt{c_j}}{\sqrt{\bar{c}_j}}\bigg)dxds
	\le \frac{\lambda}{2}\sum_{i=1}^n\int_0^t\int_\Omega|Z_i|^2 dxds \\
	&{}+ C\max_{j=1,\ldots,n}\|\na\sqrt{\bar{c}_j}\|_{
	L^\infty(\Omega\times(0,T))}^2\sum_{i,j=1}^n\int_0^t\int_\Omega
	\bigg(\frac{\sqrt{c_i}}{\sqrt{\bar{c}_i}}A_{ij}^{BD}(\bar{\bm{c}})
	- A_{ij}^{BD}({\bm{c}})\frac{\sqrt{c_j}}{\sqrt{\bar{c}_j}}\bigg)^2 dxds,
	\nonumber
\end{align}
and the constant $C>0$ depends on $\lambda$ and $n$. The first term on the 
right-hand side is estimated according to
\begin{align}\label{3.Z1}
  |Z_i|^2 &= \bigg|\na(\sqrt{c_i}-\sqrt{\bar{c}_i}) 
	- \frac{\sqrt{c_i}-\sqrt{\bar{c}_i}}{\sqrt{\bar{c}_i}}\na\sqrt{\bar{c}_i}\bigg|^2 \\
	&\le 2|\na(\sqrt{c_i}-\sqrt{\bar{c}_i})|^2
	+ \frac{2}{m}|\sqrt{c_i}-\sqrt{\bar{c}_i}|^2|\na\sqrt{\bar{c}_i}|^2. \nonumber
\end{align}

To estimate the second term on the right-hand side of \eqref{3.aux4}, we need
some preparations. We write
$$
  A^{BD}(\bm{c}) = P_L(A(\bm{c})+P_{L^\perp})^{-1}
	= \frac{P_L\operatorname{adj}(A(\bm{c})+P_{L^\perp})}{\det
	(A(\bm{c})+P_{L^\perp})} =: \frac{R(\sqrt{\bm{c}})}{S(\sqrt{\bm{c}})},
$$
where ``adj'' denotes the adjugate matrix. We know that the elements of
$A(\bm{c})$, $P_L$, and $P_{L^\perp}$ are polynomials of $\sqrt{\bm{c}}$.
Therefore, $R(\sqrt{\bm{c}})$ and $S(\sqrt{\bm{c}})$ are also polynomials
of $\sqrt{\bm{c}}$. 
Any eigenvalue of $A(\bm{c})$ is also an eigenvalue of $A(\bm{c})+P_{L^\perp}$
(since $L^\perp=\ker A(\bm{c}) $). As $A(\bm{c})$ has the eigenvalue 0 with
eigenvector $\sqrt{\bm{c}}$, $A(\bm{c})+P_{L^\perp}$ has the eigenvalue 1
with the same eigenvector. Moreover, all other eigenvalues of $A(\bm{c})+P_{L^\perp}$
are larger than or equal to $\mu$. Since the determinant of a matrix
is the product of its eigenvalues, we conclude that
$S(\sqrt{\bm{c}})\ge \mu^{n-1}>0$. This shows that $S(\sqrt{\bm{c}})$ is uniformly
bounded from below. Thus, we can estimate as follows, denoting the elements
of the matrix $R(\sqrt{\bm{c}})$ by $R_{ij}(\sqrt{\bm{c}})$:
\begin{align*}
  \bigg|\frac{\sqrt{c_i}}{\sqrt{\bar{c}_i}} & A_{ij}^{BD}(\bar{\bm{c}})
	- A_{ij}^{BD}(\bm{c})\frac{\sqrt{c_j}}{\sqrt{\bar{c}_j}}\bigg|
	= \bigg|\frac{\sqrt{c_i}R_{ij}(\sqrt{\bar{\bm{c}}})}{\sqrt{\bar{c}_i}
	S(\sqrt{\bar{\bm{c}}})} - \frac{R_{ij}(\sqrt{{\bm{c}}})
	\sqrt{c_j}}{S(\sqrt{{\bm{c}}})\sqrt{\bar{c}_j}}\bigg| \\
	&= \frac{1}{S(\sqrt{\bm{c}})S(\sqrt{\bar{\bm{c}}})\sqrt{\bar{c}_i{\bar{c}_j}}}
	\big|\big(\sqrt{c_i}S(\sqrt{{\bm{c}}}) -\sqrt{\bar{c}_i}S(\sqrt{\bar{\bm{c}}})\big)
	R_{ij}(\sqrt{\bar{\bm{c}}})\sqrt{\bar{c}_j} \nonumber \\
  &\phantom{xx}{}- \big(R_{ij}(\sqrt{\bm{c}})\sqrt{c_j} - R_{ij}(\sqrt{\bar{\bm{c}}})
	\sqrt{\bar{c}_j}\big)\sqrt{\bar{c}_i}S(\sqrt{\bar{\bm{c}}})\big| \nonumber \\
	&\le C(m)\sum_{i=1}^n|\sqrt{c_i}-\sqrt{\bar{c}_i}|, \nonumber 
\end{align*}
where $C(m)>0$ depends on the Lipschitz constants of the polynomials
$\sqrt{c_i}R_{ij}(\sqrt{\bm{c}})$ and $\sqrt{c_i}S(\sqrt{\bm{c}})$.
Inserting this estimate into \eqref{3.aux4}, we obtain
\begin{align}\label{3.T2}
  4\sum_{i,j=1}^n&\int_0^t\int_\Omega
	Z_i\cdot\na\sqrt{\bar{c}_j}\bigg(\frac{\sqrt{c_i}}{\sqrt{\bar{c}_i}}
	A_{ij}^{BD}(\bar{\bm{c}})
	- A_{ij}^{BD}({\bm{c}})\frac{\sqrt{c_j}}{\sqrt{\bar{c}_j}}\bigg)dxds \\
	&\le \lambda\sum_{i=1}^n\int_0^t\int_\Omega|\na(\sqrt{c_i}-\sqrt{\bar{c}_i})|^2dxds
	+ C\sum_{i=1}^n\int_0^t\int_\Omega(\sqrt{c_i}-\sqrt{\bar{c}_i})^2dxds, \nonumber
\end{align}
where $C>0$ also depends on the $L^\infty$ norm of $\na\sqrt{\bar{c}_i}$
through \eqref{3.Z1}.

Finally, we use estimates \eqref{3.T1} and \eqref{3.T2} in the relative entropy 
inequality \eqref{3.Hrel2}, together with Lemma \ref{lem.ent} in Appendix 
\ref{app.ent}, to find that
\begin{align*}
  H(&\bm{c}(t)|\bar{\bm{c}}(t)) + \lambda\sum_{i=1}^n\int_0^t\int_\Omega
	|\na(\sqrt{c_i}-\sqrt{\bar{c}_i})|^2 dxds \\
	&\le C \sum_{i=1}^n\int_0^t\int_\Omega(\sqrt{c_i}-\sqrt{\bar{c}_i})^2dxds
\le C\sum_{i=1}^n\int_0^t H(\bm{c}|\bar{\bm{c}})ds,
\end{align*}
and an application of Gronwall's lemma finishes the proof.

\begin{remark}[Nonhomogeneous total mass]\rm 
The condition $\sum_{i=1}^n c_{i}^0(x)=1$ for $x\in\Omega$ on the initial total mass 
can be relaxed to $\sum_{i=1}^n c_{i}^0(x)=M(x)$ for $x\in\Omega$ and some 
strictly positive
function $M\in L^\infty(\Omega)$. In this situation, the force-flux relations in
\eqref{1.eq} change to
$$
  \na c_i - \frac{c_i}{\sum_{j=1}^n c_j}\sum_{j=1}^n\na c_j
	= -\sum_{j=1,\,j\neq i}^n \frac{c_ic_j}{D_{ij}}(u_i-u_j), \quad i=1,\ldots,n.
$$
Notice that the total mass $\sum_{j=1}^n c_j=M$ is preserved in time. The
previous equation can be expressed in terms of the matrix $A(\bm{c})$,
defined in \eqref{1.A}, by
$$
  \sum_{j=1}^n (P_L)_{ij}\na\sqrt{c_j} = -\sum_{j=1}^n A_{ij}(\bm{c})\sqrt{c_i}u_j,
$$
where the projection matrix $P_L$ is now given by
$(P_L)_{ij}=\delta_{ij}-\sqrt{c_ic_j}/M(x)$, $i,j=1,\ldots,n$.
Lemma \ref{lem.A} still holds in this situation with 
$\alpha\le\inf_{x\in\Omega} M(x)\min_{i\neq j}(1/D_{ij})$. The relative entropy
inequalities \eqref{3.Hrel} and \eqref{3.Hrel2} do not depend on the
assumption $\sum_{i=1}^n c_i=1$ such that the relative entropy inequalities
in Theorem \ref{thm.ws} and Corollary \ref{coro} still hold but with constants
depending on $M$.
\qed
\end{remark}


\section{Weak-strong uniqueness for generalized Maxwell--Stefan systems}\label{sec.gen}

We consider the generalized Maxwell--Stefan system \eqref{1.gms1}--\eqref{1.gms2}.
First, we rewrite \eqref{1.gms2} in terms of the Bott--Duffin inverse of $B(\bm{c})$.
To this end, we recall the definition of $(P_L)_{ij}=\delta_{ij}-\sqrt{c_ic_j}$
and rewrite the right-hand side of \eqref{1.gms2},
\begin{align*}
  c_i\na h'_i(c_i) - c_i\sum_{j=1}^n c_j\na h'_j(c_j)
	&= \sqrt{c_i}\bigg(\sqrt{c_i}\na h'_i(c_i) - \sum_{j=1}^n\sqrt{c_ic_j}
	\sqrt{c_j}\na h'_j(c_j)\bigg) \\
	&= \sqrt{c_i}\sum_{j=1}^n(P_L)_{ij}\sqrt{c_j}\na h'_j(c_j),
\end{align*}
as well as the left-hand side of \eqref{1.gms2}, using definition \eqref{1.AK}
of $B(\bm{c})$,
$$
  -\sum_{j=1}^n K_{ij}(\bm{c})c_ju_j = -\sqrt{c_i}\sum_{j=1}^n B_{ij}(\bm{c})
	\sqrt{c_j}u_j, \quad i=1,\ldots,n,
$$
showing that \eqref{1.gms2} is equivalent to
$$
  -\sum_{j=1}^n B_{ij}(\bm{c})\sqrt{c_j}u_j
	= \sum_{j=1}^n(P_L)_{ij}\sqrt{c_j}\na h'_j(c_j), \quad i=1,\ldots,n.
$$
We prove in Lemma \ref{lem.AK} below that the Bott--Duffin inverse $B^{BD}(\bm{c})$
of $B(\bm{c})$ exists. Thus, we can invert the previous system:
$$
  \sqrt{c_i}u_i = -\sum_{j=1}^n (B^{BD}(\bm{c})P_L)_{ij}\sqrt{c_j}\na h_j'(c_j)
	= -\sum_{j=1}^n B_{ij}^{BD}(\bm{c})\sqrt{c_j}\na h_j'(c_j),
$$
where we have used the relation $B^{BD}(\bm{c})P_L=B^{BD}(\bm{c})$ (see 
\eqref{app.bd} in Appendix \ref{app}). This equation generalizes \eqref{1.eqABD}.
We conclude that system \eqref{1.gms1}--\eqref{1.gms2} can be written as
\begin{equation}\label{4.eqABD}
  \pa_t c_i = \diver\bigg(\sum_{j=1}^n \sqrt{c_i} B_{ij}^{BD}(\bm{c})\sqrt{c_j}
	\na h'_j(c_j)\bigg), \quad i=1,\ldots,n.
\end{equation}


\subsection{Properties of the matrix $B(\bm{c})$}

We prove the following lemma.

\begin{lemma}\label{lem.AK}
Let Assumptions (B1)--(B4) hold for $B(\bm{c})$, defined in \eqref{1.AK}.
Then the Bott--Duffin inverse $B^{BD}(\bm{c})=P_L(B(\bm{c})P_L+P_{L^\perp})^{-1}$ 
of $B(\bm{c})$ exists, is symmetric and satisfies the following properties:
\begin{itemize}
\item Let $s>0$. Then the elements $B_{ij}^{BD}(\bm{c})$ are bounded and Lipschitz 
continuous for all $\bm{c}\in[s,1]^n$.
\item Let $m>0$. Then there exists $\lambda(m)>0$ such that for all
$\bm{z}\in\R^n$ and $\bm{c}\in[m,1]^n$,
\begin{equation}\label{4.ABDpd}
  \bm{z}^T B^{BD}(\bm{c})\bm{z} \ge \lambda(m)|P_L\bm{z}|^2.
\end{equation}
\item The matrix $B^{BD}(\mathbf{c})$ satisfies for all
$\bm{z}\in\R^n$ and $\bm{c}\in[0,1]^n$, 
\begin{equation}\label{4.ABDless}
 \bm{z}^T B^{BD}(\bm{c})\bm{z} \le \frac{1}{\mu}|\bm{z}|^2,
\end{equation}
recalling that $\mu>0$ is a lower bound for the nonzero eigenvalues of
$B(\bm{c})$; see Assumption (B4).
\end{itemize}
\end{lemma}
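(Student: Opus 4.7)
The plan is to follow the structure of the proof of Lemma~\ref{lem.A}, replacing the explicit formulas for $A(\bm{c})$ by the abstract hypotheses (B1)--(B4). Existence of $B^{BD}(\bm{c})=P_L(B(\bm{c})P_L+P_{L^\perp})^{-1}$ will follow directly from Assumption (B1) and Lemma~\ref{lem.bott2} of Appendix~\ref{app}, since the condition $L^\perp=\ker(B(\bm{c})P_L)$ is exactly what is needed to invert $B(\bm{c})P_L+P_{L^\perp}$. For the symmetry the plan is to use (B1) to see that $B(\bm{c})$ is a symmetric map with $\operatorname{ran} B(\bm{c})=L$, so that the restriction $B(\bm{c})|_L\colon L\to L$ is symmetric and invertible; a direct calculation on the decomposition $\R^n=L\oplus L^\perp$ will then give $B^{BD}(\bm{c})=P_L(B(\bm{c})|_L)^{-1}P_L$, which is manifestly symmetric.

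For the boundedness and Lipschitz continuity on $[s,1]^n$, the approach is to express the inverse through the adjugate,
\[
B^{BD}(\bm{c})=\frac{P_L\,\operatorname{adj}(B(\bm{c})P_L+P_{L^\perp})}{\det(B(\bm{c})P_L+P_{L^\perp})}.
\]
By (B2), the entries of $B(\bm{c})$ are bounded and Lipschitz on $[s,1]^n$, while the entries of $P_L$ and $P_{L^\perp}$ are polynomials in $\sqrt{\bm{c}}$ and hence Lipschitz as well, so the numerator will be bounded and Lipschitz on $[s,1]^n$. For the denominator one observes that the spectrum of $B(\bm{c})P_L+P_{L^\perp}$ consists of the nonzero eigenvalues of $B(\bm{c})$ (acting on $L$) together with the eigenvalue $1$ coming from $P_{L^\perp}$ on $L^\perp$; by (B4) this spectrum is bounded below by $\min(\mu,1)>0$ uniformly in $\bm{c}\in[0,1]^n$, so $\det(B(\bm{c})P_L+P_{L^\perp})\ge(\min(\mu,1))^n>0$, and the quotient will be Lipschitz on $[s,1]^n$.

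The lower bound \eqref{4.ABDpd} on $[m,1]^n$ will be obtained from an upper spectral bound on $B(\bm{c})P_L+P_{L^\perp}$: using (B3),
\[
\|B(\bm{c})P_L+P_{L^\perp}\|\le\|B(\bm{c})\|_F+1\le\gamma(m)+1=:1/\lambda(m),
\]
so all eigenvalues of $(B(\bm{c})P_L+P_{L^\perp})^{-1}$ are at least $\lambda(m)$. Combined with the identity $B^{BD}(\bm{c})=P_L(B(\bm{c})P_L+P_{L^\perp})^{-1}P_L$ (a consequence of \eqref{app.bd}), this will yield
\[
\bm{z}^T B^{BD}(\bm{c})\bm{z}=(P_L\bm{z})^T(B(\bm{c})P_L+P_{L^\perp})^{-1}(P_L\bm{z})\ge\lambda(m)|P_L\bm{z}|^2.
\]
For the upper bound \eqref{4.ABDless}, the dual argument works: by (B4) the eigenvalues of $(B(\bm{c})P_L+P_{L^\perp})^{-1}|_L$ are at most $1/\mu$, and the same identity gives $\bm{z}^T B^{BD}(\bm{c})\bm{z}\le\mu^{-1}|P_L\bm{z}|^2\le\mu^{-1}|\bm{z}|^2$. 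The main technical point to track is the dependence on the lower bound $m$: the Lipschitz constants and the quadratic-form constant $\lambda(m)$ will both deteriorate as $m\to 0$ through $\gamma(m)$, whereas the upper bound $1/\mu$ in \eqref{4.ABDless} remains uniform on the whole simplex, which is precisely why it is stated without any restriction on $\bm{c}$.
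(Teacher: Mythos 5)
Your proposal is correct and follows essentially the same route as the paper: existence and symmetry via the appendix lemmas on the Bott--Duffin inverse, the adjugate formula together with a spectral lower bound on $\det(B(\bm{c})P_L+P_{L^\perp})$ for boundedness and Lipschitz continuity, a spectral-radius estimate for \eqref{4.ABDpd}, and an eigenvalue argument on $L$ for \eqref{4.ABDless}. Your treatment of \eqref{4.ABDless} is in fact slightly more streamlined than the paper's, which establishes the reciprocal-eigenvalue correspondence between $B(\bm{c})$ and $B^{BD}(\bm{c})$ by an explicit back-and-forth with eigenvectors, whereas you read it off directly from the block decomposition of $B(\bm{c})P_L+P_{L^\perp}$ on $L\oplus L^\perp$; and your operator-norm bound $\gamma(m)+1$ in place of the paper's Frobenius-norm estimate $\gamma(m)n+1$ gives a marginally sharper $\lambda(m)$, but neither difference changes the substance of the argument.
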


\begin{proof}
Assumption (B1) and Lemma \ref{lem.bott1} (ii) in Appendix \ref{app} imply that
$$
  \ker(B(\bm{c})P_L+P_{L^\perp}) = \ker(B(\bm{c})P_L)\cap L 
	= L^\perp\cap L = \{0\}.
$$
Hence, $B(\bm{c})P_L+P_{L^\perp}$ is invertible and the Bott--Duffin inverse
is well defined and symmetric. 

We continue by studying the eigenvalues of $B(\bm{c})P_L+P_{L^\perp}$.
A computation shows that for 
$\sqrt{\bm{c}}\in\ker(B(\bm{c})P_L)=L^\perp$ we have
$(B(\bm{c})P_L+P_{L^\perp})\sqrt{\bm{c}}=P_{L^\perp}\sqrt{\bm{c}}=\sqrt{\bm{c}}$,
i.e., $\sqrt{\bm{c}}$ is an eigenvector of $B(\bm{c})P_L+P_{L^\perp}$ with
eigenvalue 1. Let $\xi \notin L^\perp$, $\xi \ne 0$, be another eigenvector,
$$
  ( B(\bm{c})P_L+P_{L^\perp} ) \xi = \rho \xi.
$$
Then $\rho \ne 0$. Applying $P_L$ on both sides, we obtain 
$P_L B(\bm{c})P_L \xi = B(\bm{c}) (P_L \xi ) = \rho P_L \xi$, i.e., 
$P_L \xi \ne 0$ is an eigenvector of $B(\bm{c})$ with eigenvalue $\rho$. 
Due to Assumption (B4), we conclude that $\rho \ge \mu > 0$.

We claim that the elements $B_{ij}^{BD}(\bm{c})$ are bounded and Lipschitz 
continuous for all $\bm{c}\in[s,1]^n$. Indeed, observe that
\begin{equation}\label{4.adj}
  B^{BD}(\bm{c}) = P_L(B(\bm{c})P_L+P_{L^\perp})^{-1}
	= \frac{P_L\operatorname{adj}(B(\bm{c})P_L+P_{L^\perp})}{
	\det(B(\bm{c})P_L+P_{L^\perp})},
\end{equation}
where ``adj'' denotes the adjugate.
Since the determinant of a matrix is the product of its eigenvalues,
$\det(B(\bm{c})P_L+P_{L^\perp})\ge\mu^{n-1}>0$ and the denominator in
\eqref{4.adj} is bounded from below.
Assumption (B2) implies that the elements of $B(\bm{c})$ are bounded. 
Hence, all elements
of $\operatorname{adj}(B(\bm{c})P_L+P_{L^\perp})$ are bounded too. We conclude 
from \eqref{4.adj} that the elements of $B^{BD}(\bm{c})$ are bounded.
Since the product of Lipschitz continuous functions is Lipschitz continuous,
Assumption (B2) further implies that the elements of $B^{BD}(\bm{c})$ are
Lipschitz continuous for all $\bm{c}\in[s,1]^n$ for any $s>0$.

We wish to verify \eqref{4.ABDpd}. Since the spectral radius $r$ of a matrix
is bounded by its Frobenius norm $\|\cdot\|_F$ and the Frobenius norm is
submultiplicative, we have
\begin{align*}
  r(B(\bm{c})P_L+P_{L^\perp}) &\le \|B(\bm{c})P_L+P_{L^\perp}\|_F
	\le \|B(\bm{c})P_L\|_F + \|P_{L^\perp}\|_F \\
	&\le \|B(\bm{c})\|_F\|P_L\|_F + \|P_{L^\perp}\|_F.
\end{align*}
The Frobenius norms of $P_L$ and $P_{L^\perp}$ are estimated according to
\begin{align*}
  \|P_L\|_F^2 &= \sum_{i,j=1}^n(\delta_{ij}-\sqrt{c_ic_j})^2
	\le \sum_{i,j=1}^n 1 = n^2, \\
	\|P_{L^\perp}\|_F^2 &= \sum_{i,j=1}^n(\sqrt{c_ic_j})^2 
	= \bigg(\sum_{i=1}^n c_i\bigg)^2 = 1.
\end{align*}
Assumption (B3) guarantees that $\|B(\bm{c})\|_F\le \gamma(m)$.
This shows that $r(B(\bm{c})P_L+P_{L^\perp})\le {\gamma}(m)n+1$. We infer that
the smallest eigenvalue of $(B(\bm{c})P_L+P_{L^\perp})^{-1}$ is larger than
$ \lambda(m) := 1/({\gamma}(m)n+1)$ proving  \eqref{4.ABDpd}.

It remains to prove \eqref{4.ABDless}. First,
we show that the nonzero eigenvalues of $B(\bm{c})$ and $B^{BD}(\bm{c})$ 
are reciprocal to each other. 
Let $\ell\in\R$ be a nonzero eigenvalue of $B^{BD}(\bm{c})$. 
Then the corresponding eigenvector $\bm{y} \in L$ satisfies
$B^{BD}(\bm{c})\bm{y} = \ell \bm{y}$, which is 
$P_L (B(\bm{c})P_L + P_{L^\perp})^{-1}\bm{y} = \ell \bm{y}$. 
Hence, $\bm{z}:=(B(\bm{c})P_L + P_{L^\perp})^{-1}\bm{y}$ satisfies 
$P_L \bm{z} = \ell (B(\bm{c})P_L+P_{L^\perp})\bm{z}$. Applying $P_L$ on both sides 
yields $P_L \bm{z} = \ell P_L B(\bm{c}) P_L \bm{z} = \ell B(\bm{c})P_L \bm{z}$. 
Thus, $P_L \bm{z}$ is an eigenvector of $B(\mathbf{c})$ with eigenvalue $1/\ell$. 
Similarly, we can reverse the above argument and verify that if $\bm{z}$ is a 
nonzero eigenvector of $B(\bm{c})$ with eigenvalue $\ell$, then 
$(B(\bm{c})P_L+P_{L^\perp}) \bm{z}$ is an eigenvector of $B^{BD}(\bm{c})$ 
with eigenvalue $1/\ell$. We conclude that the largest eigenvalue of 
$B^{BD}(\bm{c})$ is the reciprocal of the smallest eigenvalue of $B(\bm{c})$, 
and  Assumption (B4) implies that 
$\bm{z}^T B^{BD}(\bm{c})\bm{z} \le  |\bm{z}|^2/\mu$ for all $\bm{z}\in\R^n$.
\end{proof}


\subsection{Weak and strong solutions}

We call $\bm{c}$ a {\em weak solution} to \eqref{1.bic}, 
\eqref{1.gms1}--\eqref{1.gms2} if
$$
  c_i\in C_{\rm loc}^0([0,\infty);{\mathcal V}')\cap 
	L_{\rm loc}^2(0,\infty;H^1(\Omega)), \quad i=1,\ldots,n,
$$
where ${\mathcal V}'$ is the dual space of ${\mathcal V}=\{w\in H^2(\Omega):
\na w\cdot\nu=0$ on $\pa\Omega\}$, it holds for any test function
$\phi_i\in C_{\rm loc}^1([0,\infty);C^1(\overline\Omega))$ with $\na\phi_i\cdot\nu=0$
on $\pa\Omega$ and all $t>0$ that
\begin{align*}
  \int_\Omega & c_i(t)\phi_i(t)dx - \int_\Omega c_{i}^0\phi_i(0)dx
	- \int_0^t\int_\Omega c_i\pa_t\phi_i dxds \\
	&{}+ \sum_{j=1}^n\int_0^t\int_\Omega\sqrt{c_i}B_{ij}^{BD}(\bm{c})\sqrt{c_j}
	\na h'_j(c_j)\cdot\na\phi_i dxds = 0, \quad i=1,\ldots,n, 
\end{align*}
and the entropy dissipation inequality
\begin{equation}\label{4.ei}
  H(\bm{c}(t)) + \sum_{i,j=1}^n\int_0^t\int_\Omega \sqrt{c_ic_j}B_{ij}^{BD}(\bm{c})
	\na h'_i(c_i)\cdot\na h_j'(c_j)dxds \le H(\bm{c}^0)
\end{equation}
is satisfied for $t>0$.

Furthermore, we call $\bar{\bm{c}}$ a strong solution to \eqref{1.bic}, 
\eqref{1.gms1}--\eqref{1.gms2} 
if $c_i\in C_{\rm loc}^1([0,\infty);C^1(\overline\Omega))$ for $i=1,\ldots,n$,
if \eqref{1.bic}, \eqref{1.gms1}--\eqref{1.gms2} are satisfied pointwise, and 
the entropy identity \eqref{1.entid} is fulfilled. 


\subsection{Relative entropy inequality}

The partial free energies $h_i (c_i)$ and pressures $p_i (c_i)$ 
are associated through \eqref{4.defpress}.
Define the associated relative free energy density and relative pressure via
\begin{equation}\label{4.relfundef}
\begin{aligned}
  h_i(c_i|\bar{c}_i) &:= h_i(c_i) - h_i(\bar{c}_i) - h_i'(\bar{c}_i)(c_i-\bar{c}_i), \\
  p_i (c_i|\bar{c}_i) &:= p_i(c_i) - p_i(\bar{c}_i) - p_i'(\bar{c}_i)(c_i-\bar{c}_i).
\end{aligned}
\end{equation}
We prove a relative entropy inequality associated to the generalized
Maxwell--Stefan system.

\begin{lemma}\label{lem.relent3}
Let $\bm{c}$ be a weak solution to \eqref{1.bic}, \eqref{1.gms1}--\eqref{1.gms2}
and let $\bar{\bm{c}}$ be a strong solution to 
\eqref{1.bic}, \eqref{1.gms1}--\eqref{1.gms2} satisfying
$$
  h'_i(\bar{c}_i)\in L_{\rm loc}^2(0,\infty;H^2(\Omega)), \
	h''_i(\bar{c}_i)\in L_{\rm loc}^\infty(0,\infty;L^\infty(\Omega)), \
	\pa_t\bar{c}_i\in L_{\rm loc}^2(0,\infty;L^2(\Omega)).
$$
Then the following relative entropy inequality holds:
\begin{align}
  H(&\bm{c}(t)|\bar{\bm{c}}(t)) - H(\bm{c}^0|\bar{\bm{c}}^0) 
	+ \sum_{i,j=1}^n\int_0^t\int_\Omega B_{ij}^{BD}(\bm{c})Y_i\cdot Y_j dxds
  \nonumber \\
  &\le -\sum_{i,j=1}^n\int_0^t\int_\Omega\bigg(B_{ij}^{BD}(\bm{c})\sqrt{c_j}
	- \frac{\sqrt{c_i}B_{ij}^{BD}(\bar{\bm{c}})\sqrt{\bar{c}_j}}{\sqrt{\bar{c}_i}}\bigg)
	Y_j\cdot\na h'_j(\bar{c}_j) dxds \nonumber \\
  &\phantom{xx}{}+ \sum_{i=1}^n\int_0^t\int_\Omega p_i (c_i|\bar{c}_i) 
	\diver\bigg(\sum_{j=1}^n B_{ij}^{BD}(\bar{\bm{c}}) 
	\frac{\sqrt{\bar{c}_j}}{\sqrt{\bar{c}_i}}\na h'_j(\bar{c}_j)\bigg)dxds,
  \label{4.relenineq}
\end{align}
where
\begin{equation}\label{4.defY}
  Y_i = \sqrt{c_i} \na(h'_i(c_i)-h'_i(\bar{c}_i)),  \quad i = 1,\ldots,n .
\end{equation}
\end{lemma}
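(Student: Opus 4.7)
The proof parallels that of Lemma~\ref{lem.relent}, but uses the Bott--Duffin representation \eqref{4.eqABD} of the fluxes and produces an additional relative-pressure contribution associated with the general free energies $h_i$. Starting from the Gibbs relation $p_i(\bar c_i) = \bar c_i h_i'(\bar c_i) - h_i(\bar c_i)$, the algebraic identity $h_i(c_i|\bar c_i) = h_i(c_i) - h_i'(\bar c_i)\, c_i + p_i(\bar c_i)$ integrates to the decomposition
\begin{equation*}
H(\bm c|\bar{\bm c}) = H(\bm c) + \int_\Omega \sum_i p_i(\bar c_i)\,dx - \int_\Omega \sum_i h_i'(\bar c_i)\, c_i\,dx,
\end{equation*}
so that the proof reduces to estimating the three corresponding time increments between $t=0$ and $t>0$.

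The first increment is controlled by the weak entropy inequality \eqref{4.ei}; the second by the chain rule together with the thermodynamic identity $p_i'(\bar c_i) = \bar c_i h_i''(\bar c_i)$, which yields $\int\!\sum_i \bar c_i h_i''(\bar c_i)\pa_t\bar c_i\,dx\,ds$; and the third by testing the weak formulation of $c_i$ against $\phi_i = h_i'(\bar c_i)$. This test function is admissible because the regularity hypotheses place $h_i'(\bar c_i)$ in $H^1(\Omega\times(0,T))$ --- via $\pa_t h_i'(\bar c_i) = h_i''(\bar c_i)\pa_t\bar c_i \in L^2_{\mathrm{loc}}$ --- so that a standard density extension applies. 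Summing the three contributions, using the symmetry of $B^{BD}$ to pair cross dissipations, and invoking the algebraic identity
\begin{equation*}
\sum_{i,j} B_{ij}^{BD}(\bm c)\, Y_i \cdot Y_j = \sum_{i,j} \sqrt{c_i c_j}\, B_{ij}^{BD}(\bm c) \bigl[\na h_i'(c_i) - \na h_i'(\bar c_i)\bigr] \cdot \bigl[\na h_j'(c_j) - \na h_j'(\bar c_j)\bigr],
\end{equation*}
which follows from the definition \eqref{4.defY} of $Y_i$ by direct expansion, brings the $Y$-form dissipation to the left-hand side and leaves on the right cross dissipations of the type $\sqrt{c_i c_j}\, B_{ij}^{BD}(\bm c)\na h_i'(c_i)\cdot\na h_j'(\bar c_j)$ together with the residual
\begin{equation*}
I := \int_0^t\!\!\int_\Omega \sum_i (\bar c_i - c_i)\, h_i''(\bar c_i)\, \pa_t \bar c_i \, dx\, ds.
\end{equation*}

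The final step, which is the main obstacle, is the reorganization of these remaining terms into the right-hand side of \eqref{4.relenineq}. Inserting the strong equation $\pa_t\bar c_i = \diver\bigl(\sqrt{\bar c_i}\sum_j B_{ij}^{BD}(\bar{\bm c})\sqrt{\bar c_j}\,\na h_j'(\bar c_j)\bigr)$ into $I$ and integrating by parts --- boundary contributions vanish because $\na h_j'(\bar c_j)\cdot\nu = h_j''(\bar c_j)\na\bar c_j\cdot\nu = 0$ on $\pa\Omega$ --- I use $\bar c_i h_i''(\bar c_i) = p_i'(\bar c_i)$ together with the product rule to recognize $\na\bigl[(c_i - \bar c_i) p_i'(\bar c_i)\bigr]$; a further integration by parts, combined with the definition \eqref{4.relfundef} of $p_i(c_i|\bar c_i)$, then produces the relative-pressure term of \eqref{4.relenineq}, while the ratios $\sqrt{c_i}/\sqrt{\bar c_i}$ and $\sqrt{c_j}/\sqrt{\bar c_j}$ arising from the mismatch between $\sqrt{c_i c_j}$ and $\sqrt{\bar c_i \bar c_j}$ combine with the leftover cross dissipations to form the interaction term comparing $B^{BD}(\bm c)$ and $B^{BD}(\bar{\bm c})$. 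Accurate bookkeeping of signs and indices in this last recombination is the delicate point, but no new analytic estimate beyond integrability of the individual terms --- guaranteed by the stated regularity --- is required.
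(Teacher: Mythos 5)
Your argument is correct and essentially reproduces the paper's proof: you test the weak formulation against $\phi_i = h_i'(\bar c_i)$ (justified by density from the stated regularity), combine it with the weak entropy inequality \eqref{4.ei} and the strong equation $\pa_t\bar c_i=\diver(\sqrt{\bar c_i}\sum_jB^{BD}_{ij}(\bar{\bm c})\sqrt{\bar c_j}\na h_j'(\bar c_j))$, arrive at the paper's intermediate inequality \eqref{4.relenineq2}, and then exploit $\na p_i(c_i)=c_i\na h_i'(c_i)$ together with an integration by parts (boundary terms vanishing by the no-flux condition) to assemble $p_i(c_i|\bar c_i)$. Your bookkeeping through the decomposition $H(\bm c|\bar{\bm c})=H(\bm c)+\int\sum_ip_i(\bar c_i)\,dx-\int\sum_ih_i'(\bar c_i)c_i\,dx$ and the chain rule for $\int p_i(\bar c_i)\,dx$ is a cosmetic reorganization of the paper's subtraction of the entropy identity \eqref{1.entid} for $\bar{\bm c}$ from the weak form of $c_i-\bar c_i$; both yield the identical residual $I=\int_0^t\int_\Omega\sum_i(\bar c_i-c_i)h_i''(\bar c_i)\pa_t\bar c_i\,dx\,ds$ and the same final recombination.
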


Note that the definition for $Y_i$ differs 
from that one used in Section \ref{sec.ws}.

\begin{proof}
We proceed as in the proof of Lemma \ref{lem.relent2}, but re-arrange the terms
in a different fashion. 
The difference $c_i-\bar{c}_i$ satisfies the weak formulation
\begin{align*}
  0 &= \int_\Omega (c_i-\bar{c}_i)(t)\phi_i(t)dx 
	- \int_\Omega (c_{i}^0-\bar{c}_{i}^0)\phi_i(0)dx
	- \int_0^t\int_\Omega (c_i-\bar{c}_i) \pa_t\phi_i dxds \\
	&\phantom{xx}{}
	+ \sum_{j=1}^n\int_0^t\int_\Omega\big(\sqrt{c_i}B_{ij}^{BD}(\bm{c})\sqrt{c_j}
	\na h'_j(c_j) -	\sqrt{\bar{c}_i}B_{ij}^{BD}(\bar{\bm{c}})\sqrt{\bar{c}_j}
	\na h'_j(\bar{c}_j)\big)\cdot\na\phi_i dxds
\end{align*}
for $i=1,\ldots,n$.
We wish to use $\phi_i=h'_i(\bar{c}_i)$ as a test function. Strictly speaking,
this is not possible, but, as in the proof of Lemma \ref{lem.relent}, we can
use a density argument. Then, using \eqref{4.eqABD} for the third term
and adding over $i=1,\ldots,n$, we obtain
\begin{equation}
\label{4.eqdiff}
\begin{aligned}
  0 &= \sum_{i=1}^n\int_\Omega (c_i-\bar{c}_i)(t)h'_i(\bar{c}_i(t))dx 
	- \sum_{i=1}^n\int_\Omega (c_{i}^0-\bar{c}_{i}^0)h'_i(\bar{c}_{i0})dx \\
	&\phantom{xx}{}- \sum_{i=1}^n\int_0^t\int_\Omega (c_i-\bar{c}_i)h''_i(c_i)
	\diver\bigg(\sum_{j=1}^n\sqrt{\bar{c}_i}B_{ij}^{BD}(\bar{\bm{c}})\sqrt{\bar{c}_j}
	\na h'_j(\bar{c}_j)\bigg)dxds \\
	&\phantom{xx}{}
	+ \sum_{i,j=1}^n\int_0^t\int_\Omega\big(\sqrt{c_i}B_{ij}^{BD}(\bm{c})\sqrt{c_j}
	\na h'_j(c_j) -	\sqrt{\bar{c}_i}B_{ij}^{BD}(\bar{\bm{c}})\sqrt{\bar{c}_j}
	\na h'_j(\bar{c}_j)\big)\cdot\na h'_i(\bar{c}_i) dxds.
\end{aligned}
\end{equation}
Subtracting \eqref{4.eqdiff} and the entropy identity \eqref{1.entid}
for $\bar{\bm{c}}$ from the entropy inequality \eqref{4.ei} for $\bm{c}$,
we find that
\begin{align}
  H(\bm{c}(t)|\bar{\bm{c}}(t)) &\le H(\bm{c}^0|\bar{\bm{c}}^0) 
	- \sum_{i,j=1}^n\int_0^t\int_\Omega B_{ij}^{BD}(\bm{c})
	\sqrt{c_ic_j}\na(h'_i(c_i)-h'_i(\bar{c}_i))\cdot\na h'_j(c_j)dxds \nonumber \\
	&\phantom{xx}{}- \sum_{i=1}^n\int_0^t\int_\Omega(c_i-\bar{c}_i)h_i''(\bar{c}_i)
	\diver\bigg(\sum_{j=1}^n \sqrt{\bar{c}_i}B_{ij}^{BD}(\bar{\bm{c}})\sqrt{\bar{c}_j}
	\na h'_j(\bar{c}_j)\bigg)dxds. \label{4.relenineq2}
\end{align}
In turn, using \eqref{4.defY}, this is rewritten as
\begin{align*}
  H(&\bm{c}(t)|\bar{\bm{c}}(t)) - H(\bm{c}^0|\bar{\bm{c}}^0) 
  + \sum_{i,j=1}^n\int_0^t\int_\Omega B_{ij}^{BD}(\bm{c}) Y_i \cdot Y_j dxds \\
  &\le -\sum_{i,j=1}^n\int_0^t \int_\Omega\bigg(B_{ij}^{BD}(\bm{c})\sqrt{c_j}
	- \sqrt{c_i}B_{ij}^{BD}(\bar{\bm{c}}) \frac{\sqrt{\bar{c}_j}}{\sqrt{\bar{c}_i}}\bigg)
	Y_i\cdot\na h'_j(\bar{c}_j) dxds \\
  &\phantom{xx}{} -\sum_{i,j=1}^n\int_0^t\int_\Omega\sqrt{c_i}B_{ij}^{BD}(\bar{\bm{c}}) 
  \frac{\sqrt{\bar{c}_j}}{\sqrt{\bar{c}_i}}Y_i\cdot\na h'_j(\bar{c}_j) dxds \\
  &\phantom{xx}{}-\int_0^t\int_\Omega \sum_{i=1}^n  (c_i-\bar{c}_i)
	\bar{c}_i h_i''(\bar{c}_i)\diver\bigg(\sum_{j=1}^n B_{ij}^{BD}(\bar{\bm{c}}) 
	\frac{\sqrt{\bar{c}_j}}{\sqrt{\bar{c}_i}}\na h'_j(\bar{c}_j)\bigg)dxds \\
  &\phantom{xx}{} -\int_0^t\int_\Omega \sum_{i=1}^n (c_i-\bar{c}_i)
	h_i''(\bar{c}_i) \na \bar{c}_i\cdot \sum_{j=1}^n B_{ij}^{BD}(\bar{\bm{c}}) 
	\frac{\sqrt{\bar{c}_j}}{\sqrt{\bar{c}_i}}\na h'_j(\bar{c}_j) dxds \\
  &=: J_1 + J_2 + J_3 + J_4.
\end{align*}
The sum $J_2 + J_4$ becomes
\begin{align*}
  J_2 + J_4 &= -\int_0^t\int_\Omega \sum_{i=1}^n \big(c_i\na h_i'(c_i) 
	- \bar{c}_i\na h_i'(\bar{c}_i)\big) 
	\cdot \sum_{j=1}^n B_{ij}^{BD}(\bar{\bm{c}})
	\frac{\sqrt{\bar{c}_j}}{\sqrt{\bar{c}_i}}\na h'_j(\bar{c}_j) dxds \\
  &= \int_0^t\int_\Omega \sum_{i=1}^n(p_i(c_i) - p_i(\bar{c}_i)) 
  \diver\bigg(\sum_{j=1}^n B_{ij}^{BD}(\bar{\bm{c}}) 
	\frac{\sqrt{\bar{c}_j}}{\sqrt{\bar{c}_i}}\na h'_j(\bar{c}_j)\bigg)dxds.
\end{align*}
Combining this expression with $J_3$ and using definition \eqref{4.relfundef}
finally leads to \eqref{4.relenineq}.
\end{proof}


\subsection{The entropy dissipation structure} 

We state an auxiliary lemma that provides some control of the entropy inequality 
\eqref{4.ei} and the relative entropy inequality \eqref{4.relenineq}. 

\begin{lemma}\label{lem.dis}
Let $\bm{c}$ be a weak solution and $\bar{\bm{c}}$ be a strong solution 
to \eqref{1.bic}, \eqref{1.gms1}--\eqref{1.gms2}, satisfying the hypotheses 
of Lemma \ref{lem.relent3} and $\bar{c}_i(t)\ge m$ in $\Omega$, 
$t>0$ for some constant $m>0$.

\begin{itemize}
\item[(i)] Assume that $c_i \ge m/2$ for all $i = 1\ldots,n$ and let 
$Z_i = \sqrt{c_i}\na h_i'(c_i)$. Then, for some constant $\beta(m) > 0$, we have
\begin{equation}\label{4.esdis1}
  \sum_{i,j = 1}^n B_{ij}^{BD}(\bm{c})Z_i\cdot Z_j  
	\ge 2\beta(m)\sum_{i=1}^n|\na c_i|^2.
\end{equation}
	
\item[(ii)] Assume that ${c}_i \ge m/2$ for all $i = 1,\ldots,n$ and let  
$Y_i = \sqrt{c_i}\na(h'_i(c_i)-h'_i(\bar{c}_i))$. Then, 
for some $\beta(m) > 0$ and $C > 0$, we have
\begin{equation} \label{4.esdis3}
  \sum_{i,j = 1}^n B_{ij}^{BD}(\bm{c})Y_i\cdot Y_j  
	\ge \beta(m)\sum_{i=1}^n |\na c_i - \na \bar{c}_i |^2 
	- C\sum_{i=1}^n |c_i - \bar{c}_i|^2.
\end{equation}

\item[(iii)] Let the weak solution $\bm{c}$ satisfy \eqref{4.ei} and set
\begin{equation}\label{4.defcstar}
  c_* (x,t) := \min_{i = 1,\ldots,n} c_i (x,t).
\end{equation}
Then
\begin{equation}\label{4.esdis2}
  H(\bm{c}(t)) +  2\beta(m) \int_0^t\int_\Omega   
	\mathrm{1}_{\{c_* >  m/2\}}\sum_{i=1}^n |\na c_i|^2dxds \le H(\bm{c}^0).
\end{equation}
\end{itemize}
\end{lemma}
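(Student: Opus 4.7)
The plan is to derive all three bounds from the positive definiteness of the Bott--Duffin inverse on $L$ established in Lemma \ref{lem.AK}, namely $\bm{z}^T B^{BD}(\bm{c})\bm{z}\ge\lambda(m/2)|P_L\bm{z}|^2$ whenever $c_i\ge m/2$ for all $i$ (apply \eqref{4.ABDpd} with $m/2$ in place of $m$). The non-trivial step is to pass from the projected norm $|P_L\cdot|^2$ to a bound in terms of $\sum_i|\nabla c_i|^2$ or $\sum_i|\nabla(c_i-\bar c_i)|^2$; this is where the mass constraint $\sum_i c_i=1$ must be exploited.

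For \textbf{part (i)}, I compute $Z_i=\sqrt{c_i}\,h_i''(c_i)\nabla c_i$ and introduce the diagonal matrix $\Lambda(\bm{c})=\mathrm{diag}(\sqrt{c_i}\,h_i''(c_i))$ together with the vector $\bm{W}=(\nabla c_i)_{i=1}^n$, so that $|P_L\bm{Z}|^2=\bm{W}^T(\Lambda P_L\Lambda)\bm{W}$. The symmetric matrix $\Lambda P_L\Lambda$ is positive semi-definite with kernel spanned by $\Lambda^{-1}\sqrt{\bm{c}}=(1/h_i''(c_i))_{i=1}^n$, while $\sum_i c_i=1$ forces $\sum_i\nabla c_i=0$, so $\bm{W}$ lies in the hyperplane $V:=\{(v_i):\sum_i v_i=0\}$. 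Because $h_i''>0$ on $[m/2,1]$ by \eqref{hyp.fnh}, we have $\sum_i 1/h_i''(c_i)>0$, so the kernel of $\Lambda P_L\Lambda$ meets $V$ only at the origin, which makes $\Lambda P_L\Lambda\big|_V$ strictly positive definite. Continuity of eigenvalues together with compactness of the simplex $\{\bm{c}\in[m/2,1]^n:\sum_i c_i=1\}$ (whenever non-empty) yields a uniform lower bound $\bm{W}^T(\Lambda P_L\Lambda)\bm{W}\ge c(m)|\bm{W}|^2$ on $V$; combining with the Bott--Duffin bound proves \eqref{4.esdis1} after setting $2\beta(m):=\lambda(m/2)c(m)$.

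For \textbf{part (ii)}, I split $Y_i=Y_i^{(1)}+Y_i^{(2)}$ with $Y_i^{(1)}=\sqrt{c_i}\,h_i''(c_i)\nabla(c_i-\bar c_i)$ and $Y_i^{(2)}=\sqrt{c_i}(h_i''(c_i)-h_i''(\bar c_i))\nabla\bar c_i$. The assumption $h_i'(\bar c_i)\in L^\infty_{\mathrm{loc}}(0,\infty;W^{2,\infty}(\Omega))$ combined with $\bar c_i\ge m$ gives $\nabla\bar c_i\in L^\infty$, and $h_i''\in C^1((0,1])$ is Lipschitz on $[m/2,1]$, so $|Y_i^{(2)}|\le C|c_i-\bar c_i|$. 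Applying Young's inequality $|a+b|^2\ge\tfrac12|a|^2-|b|^2$ to $P_L\bm{Y}=P_L\bm{Y}^{(1)}+P_L\bm{Y}^{(2)}$, and using the contraction $|P_L\bm{Y}^{(2)}|\le|\bm{Y}^{(2)}|$, reduces the task to bounding $|P_L\bm{Y}^{(1)}|^2$ from below. Since $\sum_i(c_i-\bar c_i)=0$ implies $\sum_i\nabla(c_i-\bar c_i)=0$, the argument of part (i) transfers verbatim to $\bm{Y}^{(1)}$, delivering $|P_L\bm{Y}^{(1)}|^2\ge c(m)\sum_i|\nabla(c_i-\bar c_i)|^2$, from which \eqref{4.esdis3} follows after rebalancing the constants.

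\textbf{Part (iii)} is an immediate consequence of (i) and the entropy inequality \eqref{4.ei}: the integrand $\sum_{i,j}B_{ij}^{BD}(\bm{c})Z_i\cdot Z_j$ is pointwise nonnegative by Lemma \ref{lem.AK}, so restricting the spatial integral to $\{c_*>m/2\}$ can only decrease its value, and on that set part (i) applies pointwise to replace this integrand by $2\beta(m)\sum_i|\nabla c_i|^2$, yielding \eqref{4.esdis2}. The main obstacle is the spectral lemma underlying step (i): one must reconcile the two distinguished directions $\sqrt{\bm{c}}$ (arising from the Bott--Duffin structure) and $(1,\ldots,1)$ (arising from mass conservation), to confirm that the composite quadratic form $\Lambda P_L\Lambda$ remains coercive on the mass-conservation hyperplane uniformly over the compact simplex. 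Once this is secured, parts (ii) and (iii) follow by standard perturbation and restriction arguments.
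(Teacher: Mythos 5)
Your proof is correct and reaches the same conclusions, but by a genuinely different route in parts (i) and (ii). The paper eliminates the $n$th variable (setting $c_n = 1-\sum_{i<n}c_i$) and works with the reduced Hessian $Q(\widetilde{\bm{c}})$ of the $(n-1)$-variable free energy together with the auxiliary matrix $E_{ij}=(P_L)_{ij}\sqrt{c_ic_j}$; this factorization, plus a lower bound on the truncated projection $\widetilde P_L$ via $\xi^T\widetilde P_L\xi\ge c_n|\xi|^2\ge(m/2)|\xi|^2$, yields the explicit constant $\zeta(m)=m^2\eta^2/32$ and hence an explicit $\beta(m)$. You instead stay in $n$ variables, write $\bm Z=\Lambda\bm W$ with $\Lambda=\operatorname{diag}(\sqrt{c_i}\,h_i''(c_i))$, and identify the kernel of $\Lambda P_L\Lambda$ as $\operatorname{span}\{(1/h_i''(c_i))_i\}$; your key observation is the transversality of this line to the mass-conservation hyperplane $V=\{\sum_i v_i=0\}$, which forces strict positivity of the quadratic form on $V$, and you then appeal to continuity and compactness of the simplex $\{\bm c\in[m/2,1]^n:\sum c_i=1\}$ for the uniform lower bound. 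Your argument is cleaner conceptually (it isolates the geometric source of coercivity — the interaction of the two distinguished directions $\sqrt{\bm c}$ and $(1,\dots,1)$), but it is purely qualitative: it yields existence of $\beta(m)$ without the explicit dependence on $m$ that the paper's direct estimate provides. Part (ii) in your version (splitting $Y_i$ into the $\na(c_i-\bar c_i)$ piece and a lower-order remainder, applying the part-(i) coercivity to the first and Lipschitz/boundedness bounds to the second) is structurally the same as the paper's splitting into $J_5+J_6$ via the algebraic inequality $(\bm z_1+\bm z_2)^TE(\bm z_1+\bm z_2)\ge\tfrac12\bm z_1^TE\bm z_1-\bm z_2^TE\bm z_2$, and part (iii) is identical. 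One small caveat: both your argument and the paper's implicitly use $\na\bar c_i\in L^\infty$, which comes from the regularity hypotheses of Theorem \ref{thm.gws} rather than being spelled out in the statement of Lemma \ref{lem.relent3}; you correctly flag this.
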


Note that \eqref{4.esdis2} provides a partial control of the gradients, 
which however might degenerate as $m$ tends to zero.

\begin{proof}
	\emph{Proof of (i).} 
	Inequality \eqref{4.ABDpd} in Lemma \ref{lem.AK} implies that 
	\begin{align}
	  \sum_{i,j=1}^n B_{ij}^{BD}(\bm{c})Z_i\cdot Z_j
		&\ge \lambda(m/2) |P_L\bm{Z}|^2 = \lambda( m/2) \bm{Z}^TP_L^TP_L\bm{Z} 
		\label{4.aux1} \\
		&= \lambda( m/2) \sum_{i,j=1}^n(P_L)_{ij}Z_i\cdot Z_j. \nonumber
	\end{align}
	Before we can estimate the right-hand side, we need some preparations.
	
	We define the vector $\widetilde{\bm{c}}:=(c_1,\ldots,c_{n-1})$ 
	without the last component and define the entropy density in $n-1$ variables
	according to
	$$
	  \widetilde{h}(\widetilde{\bm{c}}) = \sum_{i=1}^{n-1}h_i(c_i)
		+ h_n\bigg(1-\sum_{j=1}^{n-1}c_j\bigg).
	$$
	Its partial derivative is given by
	$$
	  \widetilde{h}_i'(\widetilde{\bm{c}}) := \frac{\pa\widetilde{h}(\widetilde{\bm{c}})}{
		\pa c_i} = h_i'(c_i) - h'_n\bigg(1-\sum_{j=1}^{n-1}c_j\bigg), \quad i=1,\ldots,n-1.
	$$
	Next, introduce the matrix $E(\bm{c})$ with elements 
	$$
	E_{ij}(\bm{c})
	=(P_L)_{ij}\sqrt{c_ic_j} = c_i\delta_{ij}-c_ic_j 
	=	\begin{cases}
	c_i - c_i^2  &\quad\mbox{if } i=j. \\
	- c_i c_j    &\quad\mbox{if } i \neq j. \\
	\end{cases}
	$$
	The sum of its rows and columns vanishes, $\sum_{j=1}^n E_{ij}(\bm{c})
	= \sum_{i=1}^n E_{ij}(\bm{c}) = 0$. We deduce from
  the symmetry of $E(\bm{c})$ that for all $z_i\in\R^d$,
	\begin{align*}
	  \sum_{i,j=1}^n E_{ij}(\bm{c})z_i\cdot z_j 
		&= \sum_{i=1}^n z_i\cdot\bigg(\sum_{j=1}^{n-1}E_{ij}(\bm{c})z_j 
		+ E_{in}(\bm{c})z_n\bigg) 
		= \sum_{i=1}^n z_i\cdot\sum_{j=1}^{n-1}E_{ij}(\bm{c})(z_j-z_n)  \\
		&= \sum_{i=1}^{n-1} z_i\cdot\sum_{j=1}^{n-1}E_{ij}(\bm{c})(z_j-z_n)
		+ z_n\cdot\sum_{j=1}^{n-1}E_{nj}(\bm{c})(z_j-z_n) \\
		&= \sum_{i,j=1}^{n-1}E_{ij}(\bm{c})(z_i-z_n)\cdot(z_j-z_n).
	\end{align*}
	Choosing $z_i=\na h'_i(c_i)$ and observing that
	$Z_i=\sqrt{c_i}z_i$, we rewrite the right-hand side of \eqref{4.aux1}:
	\begin{align*}
	  \sum_{i,j=1}^n(P_L)_{ij}Z_i\cdot Z_j
		&= \sum_{i,j=1}^{n-1} E_{ij}(\bm{c}) (z_i-z_n)\cdot (z_j-z_n) \\ 
		&= \sum_{i,j=1}^{n-1}E_{ij}(\bm{c})
		\na \widetilde{h}_i'(\widetilde{\bm{c}})
		\cdot
		\na\widetilde{h}_j'(\widetilde{\bm{c}}).
	\end{align*}
	Introducing the matrix $Q(\widetilde{\bm{c}})$ with  elements
	$Q_{ij}(\widetilde{\bm{c}}) = \pa^2 \widetilde{h} (\widetilde{\bm{c}})/\pa c_i\pa c_j$
	for $i,j=1,\ldots,n-1$, this expression becomes
	\begin{align}
	  \sum_{i,j=1}^n&(P_L)_{ij}Z_i\cdot Z_j
		= \sum_{i,j,k,\ell=1}^{n-1}E_{ij}(\bm{c})
		Q_{ik}(\widetilde{\bm{c}})\na c_k 
		\cdot Q_{j\ell}(\widetilde{\bm{c}})\na c_\ell. \label{4.aux2}
	\end{align}
	We claim that there exists $\zeta(m)>0$ such that for all $\bm{y}\in\R^{n-1}$,
	\begin{equation}\label{4.QEQ}
	  \bm{y}^T\big(Q(\widetilde{\bm{c}})^T E(\bm{c})Q(\widetilde{\bm{c}})\big)\bm{y}
		\ge \zeta(m)|\bm{y}|^2.
	\end{equation}
	Then, letting $\bm{y} = \na\bm{c}$ in \eqref{4.QEQ} and using \eqref{4.aux1} 
	and \eqref{4.aux2} leads to \eqref{4.esdis1} 
	with $\beta(m)=\zeta(m)\lambda(m/2)/2$.
	The proof of \eqref{4.QEQ} proceeds in several steps.
	
	Consider first the matrix  $Q (\widetilde{\bm{c}})$. 
	Let $\eta:=\min_{i=1,\ldots,n}\min_{m/2\le c_i\le 1} h''_i(c_i)>0$ and 
	$\xi \in \mathbb{R}^{n-1}$ and compute
	\begin{align*}
	Q_{ij}(\widetilde{\bm{c}}) 
	&= h_i''(c_i)\delta_{ij} + h_n''\bigg( 1 - \sum_{k=1}^{n-1} c_k \bigg), 
	\\
	\xi^T Q \xi &= \sum_{j=1}^{n-1} h_j''(c_j) \xi_j^2 
	+ h_n''\bigg( 1 - \sum_{k=1}^{n-1} c_k \bigg) 
	(\xi_1 + \cdots + \xi_{n-1})^2 \ge \frac{\eta}{2} |\xi|^2.
	\end{align*}
	This implies that $Q(\widetilde{\bm{c}})$
	is positive definite with eigenvalues larger than or equal to $\eta/2$. 
	Consider next the $(n-1)\times(n-1)$ submatrix 
	$\widetilde{P}_L=((P_L)_{ij})_{i,j=1,\ldots,n-1}$
	of $P_L$ and note that for $\xi \in \R^{n-1}$, we have
	$$
	\xi^T\widetilde{P}_L \xi = |\xi |^2 - (\xi  \cdot \sqrt{\widetilde{c}})^2 
	\ge  |\xi |^2 - |\xi |^2 (c_1 + \cdots + c_{n-1}) 
	= c_n |\xi |^2 \ge \frac{m}{2} |\xi|^2.
	$$
	
	Finally, let $\widetilde{E}(\bm{c})$ be the first $(n-1)\times(n-1)$ submatrix 
	of $E(\bm{c})$. Then $\widetilde{E}(\bm{c})=S^T\widetilde{P}_LS$ with
	$S=\operatorname{diag}(\sqrt{c_1},\ldots,\sqrt{c_{n-1}})$
	and, for all $\bm{y}\in\R^{n-1}$,
	$$
	  \bm{y}^T\widetilde{E}(\bm{c})\bm{y}
		= (S\bm{y})^T\widetilde{P}_L(S\bm{y})
		\ge\frac{m}{2}|S\bm{y}|^2 = \frac{m}{2}\sum_{i=1}^{n-1}c_iy_i^2
		\ge \frac{m^2}{4}\sum_{i=1}^n y_i^2,
	$$
	i.e., the eigenvalues of $\widetilde{E}(\bm{c})$
	are larger than or equal to $m^2/4$. Since $\widetilde{E}(\bm{c})-(m^2/8)I_{n-1}$,
	with $I_{n-1}$ is the unit matrix on $\R^{(n-1)\times(n-1)}$, 
	and $Q(\widetilde{\bm{c}})$ are symmetric and positive definite, we deduce that
	$$
	  Q(\widetilde{\bm{c}})^T   \widetilde{E}(\bm{c}) Q(\widetilde{\bm{c}}) 
		\ge  Q(\widetilde{\bm{c}})^T \frac{m^2}{8}I_{n-1} Q(\widetilde{\bm{c}})
	  \ge \frac{m^2\eta^2}{32}I_{n-1} \, .
	$$
	This proves \eqref{4.QEQ} with $\zeta(m)=m^2\eta^2/32$. 

	\emph{Proof of (ii).} Inequality \eqref{4.ABDpd} in Lemma \ref{lem.AK} implies that 
	\begin{equation}\label{4.aux1-1}
		\sum_{i,j = 1}^n B_{ij}^{BD}(\bm{c}) Y_i \cdot Y_j 
		\ge \lambda(m/2) |P_L \bm{Y}|^2 
		= \lambda(m/2) \sum_{i,j=1}^n (P_L)_{ij} Y_i\cdot Y_j.
	\end{equation}
	Similarly as the derivation of \eqref{4.aux2}, we compute
\begin{align}
  \sum_{i,j=1}^n&(P_L)_{ij}Y_i\cdot Y_j
	= \sum_{i,j,k,\ell=1}^{n-1}E_{ij}(\bm{c})
	\big(Q_{ik}(\widetilde{\bm{c}})\na c_k 
	- Q_{ik}(\widetilde{\bar{\bm{c}}})\na\bar{c}_k\big)
	\cdot\big(Q_{j\ell}(\widetilde{\bm{c}})\na c_\ell 
	- Q_{j\ell}(\widetilde{\bar{\bm{c}}})\na\bar{c}_\ell\big) \nonumber \\
	&= \sum_{i,j,k,\ell=1}^{n-1}E_{ij}(\bm{c})\big(Q_{ik}(\widetilde{\bm{c}})
	\na(c_k-\bar{c}_k) + (Q_{ik}(\widetilde{\bm{c}})-Q_{ik}(\widetilde{\bar{\bm{c}}}))
	\na\bar{c}_k\big) \nonumber \\ 
	&\phantom{xx}{}\times\big(Q_{j\ell}(\widetilde{\bm{c}})	\na(c_\ell-\bar{c}_\ell) 
	+ (Q_{j\ell}(\widetilde{\bm{c}})-Q_{j\ell}(\widetilde{\bar{\bm{c}}}))
	\na\bar{c}_\ell\big). \nonumber
\end{align}
We remark that if $E$ is any symmetric positive definite matrix, then for any 
$\bm{z}_1$, $\bm{z}_2\in\R^n$, the Cauchy--Schwarz and Young's inequalities show that
\begin{align*}
  (\bm{z}_1&+\bm{z}_2)^T E (\bm{z}_1+\bm{z}_2)
	= \bm{z}_1^TE\bm{z}_1 + \bm{z}_1^T E\bm{z}_2 + \bm{z}_2^T E\bm{z}_1
	+ \bm{z}_2^T E\bm{z}_2 \\
	&\ge \bm{z}_1^T E\bm{z}_1 - \sqrt{\bm{z}_1^T E\bm{z}_1}\cdot
	\sqrt{\bm{z}_2^T E\bm{z}_2} - \sqrt{\bm{z}_1^T E\bm{z}_1}\cdot
	\sqrt{\bm{z}_2^T E\bm{z}_2} + \bm{z}_2^T E\bm{z}_2 \\
	&\ge \bm{z}_1^T E\bm{z}_1 - \frac12\bm{z}_1^T E\bm{z}_1 
	- 2\bm{z}_2^T E\bm{z}_2 + \bm{z}_2^T E\bm{z}_2 
	= \frac12\bm{z}_1^T E\bm{z}_1 - \bm{z}_2^T E\bm{z}_2.
\end{align*}
Using this inequality, \eqref{4.aux2} is estimated as
\begin{align}\label{4.I6I7}
  &\sum_{i,j=1}^n(P_L)_{ij}Y_i\cdot Y_j
	\ge \frac12\sum_{i,j,k,\ell=1}^{n-1} E_{ij}(\bm{c})Q_{ik}(\widetilde{\bm{c}})
	Q_{j\ell}(\widetilde{\bm{c}})\na(c_k-\bar{c}_k)\cdot\na(c_\ell-\bar{c}_\ell) \\
	&\phantom{xx}{}- \sum_{i,j,k,\ell=1}^{n-1}E_{ij}(\bm{c})
	\big((Q_{ik}(\widetilde{\bm{c}})-Q_{ik}(\widetilde{\bar{\bm{c}}}))\na\bar{c}_k\big)
	\cdot\big((Q_{j\ell}(\widetilde{\bm{c}})-Q_{j\ell}(\widetilde{\bar{\bm{c}}}))
	\na\bar{c}_\ell\big) =: J_{5}+ J_{6}. \nonumber
\end{align}
	We infer from \eqref{4.QEQ} that
	\begin{equation}\label{4.aux3}
	  J_{5} \ge \frac12 \zeta(m)\sum_{i=1}^{n-1}|\na(c_i-\bar{c}_i)|^2.
	\end{equation}
	It follows from $\na c_n=-\sum_{k=1}^{n-1}\na c_k$ that
	\begin{align*}
	  |\na(c_n-\bar{c}_n)|^2 = \bigg|\sum_{k=1}^{n-1}\na(c_k-\bar{c}_k)\bigg|^2
		&\le (n-1)\sum_{k=1}^{n-1}|\na(c_k-\bar{c}_k)|^2, \\
	  \sum_{k=1}^{n}|\na(c_k-\bar{c}_k)|^2
		&\le n\sum_{k=1}^{n-1}|\na(c_k-\bar{c}_k)|^2.
	\end{align*}
	Inserting these estimates into \eqref{4.aux3} yields finally
	\begin{equation}\label{4.I6}
	  J_{5} \ge \frac{\zeta(m)}{2n}\sum_{i=1}^n|\na(c_i-\bar{c}_i)|^2.
	\end{equation}
	
	The estimate of the term $J_{2}$ is easier. Since $E_{ij}(\bm{c})$ is bounded
	and the Hessian $Q(\widetilde{\bm{c}})=D^2\widetilde{h}$ is Lipschitz continuous,
	$$
	J_6 \le C\sum_{i,k=1}^{n-1}(Q_{ik}(\widetilde{\bm{c}})
		-Q_{ik}(\widetilde{\bar{\bm{c}}}))^2|\na\bar{c}_k|^2
		\le C\sum_{i=1}^{n-1}(c_i-\bar{c}_i)^2|\na\bar{c}_i|^2
		\le C\sum_{i=1}^{n-1}(c_i-\bar{c}_i)^2.
	$$
	Combining the above inequality with \eqref{4.I6}  and \eqref{4.I6I7} gives
	$$
	  \sum_{i,j=1}^n(P_L)_{ij}Y_i\cdot Y_j
		\ge \frac{\zeta(m)}{2n}\sum_{i=1}^n|\na(c_i-\bar{c}_i)|^2
		- C\sum_{i=1}^{n-1}(c_i-\bar{c}_i)^2.
	$$
	We conclude \eqref{4.esdis3} after inserting the previous estimate into
	\eqref{4.aux1-1}.
	
	\emph{Proof of (iii).}  Let $c_*(x,t)$ be defined by \eqref{4.defcstar} and 
	split the domain of integration into the two subdomains
	$$
	  \Omega\times(0,t) = \bigg\{ c_* > \frac{m}{2} \bigg\} \cup 
		\bigg\{ c_* \le  \frac{m}{2} \bigg\}.
	$$ 
	By Lemma \ref{lem.AK}, the matrix $B^{BD}(\bm{c})$ is symmetric and 
	positive semi-definite. Using \eqref{4.esdis1}, the entropy
	inequality \eqref{4.ei} yields \eqref{4.esdis2}.
	\end{proof}

\subsection{Proof of Theorem \ref{thm.gws}}

Lemma \ref{lem.relent3} suggests that the relative entropy inequality can be 
expressed in two ways, using either \eqref{4.relenineq2} or \eqref{4.relenineq}:
\begin{align}
  \nonumber 
  H(\bm{c}|\bar{\bm{c}})(t) &\le H(\bm{c}^0|\bar{\bm{c}}^0)
	+ \int_0^t\int_\Omega(I_1+I_2)dxds \\
  &= H(\bm{c}^0|\bar{\bm{c}}^0)
		+ \int_0^t\int_\Omega(I_3+I_4 + I_5)dxds, \label{4.I3I4I5}
\end{align}
where
\begin{align*}
  I_1 &=  - \sum_{i,j=1}^n B_{ij}^{BD}(\bm{c})
	\sqrt{c_ic_j}\na(h'_i(c_i)-h'_i(\bar{c}_i))\cdot\na h'_j(c_j), \\
	I_2 &=  - \sum_{i=1}^n(c_i-\bar{c}_i)h_i''(\bar{c}_i)
	\diver\bigg(\sum_{j=1}^n \sqrt{\bar{c}_i}B_{ij}^{BD}(\bar{\bm{c}})\sqrt{\bar{c}_j}
	\na h'_j(\bar{c}_j)\bigg), \\
	I_3 &= -\sum_{i,j=1}^n  Y_i \cdot  B^{BD}_{ij}(\bm{c}) Y_j, \\
	I_4 &= -\sum_{i,j=1}^n Y_i \cdot \bigg(B_{ij}^{BD}(\bm{c})\sqrt{c_j}
	 - \frac{\sqrt{c_i}B_{ij}^{BD}(\bar{\bm{c}})\sqrt{\bar{c}_j}}{\sqrt{\bar{c}_i}}\bigg)
	   \na h'_j(\bar{c}_j), \\
	I_5 &= \sum_{i,j=1}^n p_i(c_i|\bar{c}_i)
	\diver\bigg(\frac{B_{ij}^{BD}(\bar{\bm{c}})\sqrt{\bar{c}_j}}{\sqrt{\bar{c}_i}}
	\na h'_j(\bar{c}_j)\bigg), 
\end{align*}
and $Y_i = \sqrt{c_i}\na(h'_i(c_i)-h'_i(\bar{c}_i))$, $i = 1,\ldots,n$.

{\em Step 1: Preparations.} 
Recall that we have assumed that $\bar{c}_i  (x, t) \ge m$ for 
$x \in \Omega$, $t > 0$ for some $m > 0$.
Let $c_*(x,t):=\min_{i=1,\ldots,n}c_i(x,t)$. 
We split the estimations of the above integrals into two subdomains: one where 
$c_*(x,t)\le m/2$ and another one where $c_*(x,t)>m/2$.
To this end, we use a cutoff function. Let $\eps > 0$ be sufficiently small 
and $\psi : [0,1] \to [0,1]$ be a $C^2$-function,
which takes the values $\psi = 0$ on $[0,m/2]$,
$\psi = 1$ on $[m/2 + \eps, 1]$, and $\psi \in (0,1) $ on the complementary 
interval $(m/2,m/2 + \eps)$.
Define $\chi(\bm{c}) : \R^n \to [0,1]$ by
\begin{equation}\label{4.defcutoff}
  \chi(\bm{c}) := \prod_{i=1}^n\psi(c_i)
  = \begin{cases}
  1  &\quad\mbox{if } m/2 + \eps \le c_i \le 1\mbox{ for all } i = 1,\ldots, n, \\
  0  &\quad\mbox{if } 0 < c_i \le m/2 \mbox{ for some $i$,} \\
  \alpha(\bm{c}) \in (0,1)  & \quad\mbox{else}.
\end{cases} 
\end{equation}
We employ $\chi(\bm{c})$ to split the integral \eqref{4.I3I4I5} into two parts:
\begin{align}
  H(\bm{c}|\bar{\bm{c}})(t) - H(\bm{c}^0|\bar{\bm{c}}^0)
	&\le  \int_0^t\int_\Omega (1 - \chi (\bm{c}) ) (I_3+I_4 + I_5)dxds \label{4.split} \\
	&\phantom{xx}{}+ \int_0^t\int_\Omega \chi (\bm{c})  (I_3+I_4 + I_5) dxds 
	=: J_L + J_H. \nonumber
\end{align}
In the sequel, we estimate $J_L$ and $J_H$ separately. 

{\em Step 2: Case $c_*(x,t)\le m/2 + \eps$.}
We estimate the term $J_L$ in \eqref{4.split}.
By replacing $p(c_i|\bar{c}_i)$ in $I_5$ by definitions \eqref{4.defpress}
and \eqref{4.relfundef} and tracing backwards the derivation from
\eqref{4.relenineq2} to \eqref{4.relenineq}, we can express the
integral over $(1-\chi(\bm{c})(I_3+I_4+I_5)$ by $(1-\chi(\bm{c})(I_1+I_2)$ 
except for a term accounting for the cutoff function:
\begin{align}\label{4.JL}
  J_L &= \int_0^t \int_\Omega  (1 - \chi (\bm{c}) ) (I_1 + I_2 )dxds \\
  &\phantom{xx}{}+ \int_0^t \int_\Omega\na\chi(\bm{c})\cdot 
	\sum_{i=1}^n (p_i (c_i) - p_i (\bar{c}_i)) 
  \sum_{j=1}^n  B_{ij}^{BD}(\bar{\bm{c}})  \frac{\sqrt{\bar{c}_j}}{\sqrt{\bar{c}_i}}
	\na h'_j(\bar{c}_j). \nonumber
\end{align}

In the sequel, we estimate the right-hand side of \eqref{4.JL} term-by-term.
To estimate $I_1$, we set $\bm{Z}=(Z_1,\ldots,Z_n)$
with $Z_i:=\sqrt{c}_i\na h'_i(c_i)$, $i=1,\ldots,n$. By Lemma \ref{lem.AK},
the matrix $B^{BD}(\bm{c})$ is symmetric and positive semi-definite.
Therefore, using Young's inequality and the boundedness of $B^{BD}(\bm{c})$
(see Assumption (B2)),
\begin{align*}
  I_1 &= -\bm{Z}^T B^{BD}(\bm{c})\bm{Z} + \sum_{i,j=1}^n B_{ij}^{BD}(\bm{c})
	Z_i\sqrt{c_j}\na h'_j(\bar{c}_j) \\
	&\le -\frac12\bm{Z}^T B^{BD}(\bm{c})\bm{Z}
	+ \frac12\sum_{i,j=1}^n B_{ij}^{BD}(\bm{c})(\sqrt{c_i}\na h'_i(\bar{c}_i))\cdot
	(\sqrt{c_j}\na h'_j(\bar{c}_j)) \\
	&\le -\frac12\bm{Z}^T B^{BD}(\bm{c})\bm{Z} + C\sum_{i=1}^n|\na h'_i(\bar{c}_i)|^2,
\end{align*}
where $C>0$ depends on $m$ and $\mu$ (defined in Assumption (B4)). 
For the term $I_2$, we use the regularity for $\bar{c}_i$ to conclude that
$$
  I_2 \le \sum_{i=1}^n(c_i-\bar{c}_i)^2 + C\sum_{i,j=1}^n
	\big|h''_i(\bar{c}_i)\diver\big(\sqrt{\bar{c}_i}B_{ij}^{BD}(\bar{\bm{c}})
	\sqrt{\bar{c}_j}\na h'_j(\bar{c}_j)\big)\big|^2
	\le \sum_{i=1}^n(c_i-\bar{c}_i)^2 + C.
$$
On the set $\chi(\bm{c})<1$, we have $c_*(x,t)<m/2+\eps$, and there exists
$\ell\in\{1,\ldots,n\}$ such that $c_\ell(x,t)<m/2+\eps$. Thanks to Assumption
(H) on page \pageref{hyp.fnh}, we can apply Lemma \ref{lem.relenes} to find that
\begin{equation*}
  h_{i_0}(c_{i_0}|\bar{c}_{i_0}) \ge \kappa_m(c_{i_0}-\bar{c}_{i_0})^2 
	\ge \kappa_m \bigg(\frac{m}{2} - \eps\bigg)^2 \quad \mbox{when }\chi(\bm{c}) < 1.
\end{equation*}
We infer that
$$
  I_2 \le \sum_{i=1}^n(c_i-\bar{c}_i)^2 + C\sum_{i=1}^n h_i(c_i|\bar{c}_i)
  \le C\sum_{i=1}^n h_i(c_i|\bar{c}_i).
$$

It remains to estimate the last term in \eqref{4.JL}, using the fact that 
$\na\chi(\bm{c})$ vanishes outside the set $c_* \in [m/2,m/2+\eps]$,
the Lipschitz continuity of $p_i (c_i)$, entropy inequality \eqref{4.esdis2}
and Lemma \ref{lem.relenes}:
\begin{align*}
  &\bigg|\int_0^t \int_\Omega\na\chi(\bm{c})\cdot \sum_{i=1}^n 
	(p_i (c_i) - p_i (\bar{c}_i))\sum_{j=1}^n B_{ij}^{BD}(\bar{\bm{c}})  
	\frac{\sqrt{\bar{c}_j}}{\sqrt{\bar{c}_i}}\na h'_j(\bar{c}_j) dx ds\bigg| \\
  &\phantom{xx}{}\le C\bigg(\max_{j = 1,\ldots,n}\sup_{m/2\le c_j \le m/2 + \eps}
  \bigg| \frac{\pa\chi}{\pa c_j}(\bm{c})\bigg| \bigg)
  \int_0^t \int_\Omega \mathrm{1}_{\{m/2 < c_* < m/2 + \eps\}} 
	\sum_{i=1}^n |\na c_i|\sum_{j=1}^n |c_j - \bar{c}_j |dx ds \\
  &\phantom{xx}{}\le  C \int_0^t\int_\Omega\mathrm{1}_{\{c_* >  m/2\}}
	\sum_{i=1}^n |\na c_i|^2 dxds 
	+  C\int_0^t \int_\Omega \mathrm{1}_{\{ c_* < m/2 + \eps\}} 
  \sum_{j=1}^n |c_j - \bar{c}_j |^2 dx ds \\
  &\phantom{xx}{}\le C + \int_0^t \int_\Omega \mathrm{1}_{\{ c_* < m/2 + \eps\}} 
  \sum_{j=1}^n |c_j - \bar{c}_j |^2 dx ds \\
  &\phantom{xx}{}\le C \int_0^t \int_\Omega\mathrm{1}_{\{ c_* < m/2 + \eps\}}
	\sum_{i=1}^n h_i (c_i | \bar{c}_i) dx ds.
\end{align*}
Note that the final constant $C$, depending on 
$\max_{j = 1,\ldots,n} \sup_{m/2\le c_j \le m/2 + \eps}|(\pa\chi/\pa c_j)
(\bm{c})|$, will blow up if we let $\eps \to 0$. Therefore, we fix $\eps>0$.
Combining the previous estimate, we end up with
\begin{equation}\label{4.step2}
  J_L \le -\frac12\int_0^t \int_\Omega(1 - \chi(\bm{c}))\bm{Z}^T 
	B^{BD}(\bm{c})\bm{Z} dx ds 
	+ C \int_0^t \int_\Omega \mathrm{1}_{\{ c_* < m/2 + \eps\}}\sum_{i=1}^n 
	h_i(c_i|\bar{c}_i) dx ds.
\end{equation}

{\em Step 3: Case $c_*(x,t) > m/2$.}
We proceed to estimate the term $J_H$ in \eqref{4.split}. The range of integration 
now consists of the sets $\{m/2 < c_* < m/2 + \eps \}$, where 
$0 < \chi(\bm{c}) < 1$, and the set
$\{m/2 + \eps < c_* \le 1 \}$, where $\chi(\bm{c}) =1$.

For the term $I_3$, we use \eqref{4.esdis3} in Lemma \ref{lem.dis}:
\begin{align*}
  -\int_0^T&\int_\Omega\chi(\bm{c})I_3 dxds
	= \int_0^t\int_\Omega\chi(\bm{c})\sum_{i,j=1}^n B^{BD}_{ij}(\bm{c}) 
	Y_i\cdot Y_j dx ds \\
  &\ge \beta(m)\int_0^t\int_\Omega\chi(\bm{c})\sum_{i=1}^n|\na(c_i - \bar{c}_i)|^2 
	dx ds - C \int_0^t\int_\Omega\chi(\bm{c})\sum_{i=1}^n |c_i - \bar{c}_i|^2 dx ds.
\end{align*}
By Young's inequality with $\delta>0$, the term $I_4$ can be estimated as
\begin{align*}
  \int_0^t&\int_\Omega\chi(\bm{c})I_4 dxds
  \le \delta\sum_{i=1}^n\int_0^t\int_\Omega\chi(\bm{c})|Y_i|^2 dxds \\
  &{} + \frac{1}{4\delta}\sum_{i,j=1}^n\int_0^t\int_\Omega
	\chi(\bm{c})\bigg(B_{ij}^{BD}(\bm{c})\sqrt{c_j}
	- \frac{\sqrt{c_i}B_{ij}^{BD}(\bar{\bm{c}})\sqrt{\bar{c}_j}}{\sqrt{\bar{c}_i}}
	\bigg)^2|\na h'_j(\bar{c}_j)|^2 dxds.
\end{align*}
Recall that we work in the range $c_i > m/2$ and $\bar{c}_i \ge m$. 
The boundedness and Lipschitz continuity of $h_i''$ imply that
\begin{align*}
  \sum_{i=1}^n |Y_i|^2 &= \sum_{i=1}^n c_i|\na(h_i'(c_i)-h'_i(\bar{c}_i))|^2 
	\le \sum_{i=1}^n c_i|h''_i(c_i)\na(c_i-\bar{c}_i)
	+ (h''_i(c_i)-h''_i(\bar{c}_i))\na\bar{c}_i|^2 \\
	&\le C\sum_{i=1}^n\big(|\na(c_i-\bar{c}_i)|^2 + (c_i-\bar{c}_i)^2|\na\bar{c}_i|^2
	\big).
\end{align*}
Furthermore, the boundedness and Lipschitz continuity of $B_{ij}^{BD}$ 
(see Lemma \ref{lem.AK}) yield
\begin{align*}
  \bigg|B_{ij}^{BD}&(\bm{c})\sqrt{c_j}	- \frac{\sqrt{c_i}B_{ij}^{BD}(\bar{\bm{c}})
	\sqrt{\bar{c}_j}}{\sqrt{\bar{c}_i}}\bigg| \\
  &= \bigg|B_{ij}(\bm{c})\sqrt{c_j}	- B_{ij}^{BD}(\bar{\bm{c}})\sqrt{\bar{c}_j} 
	+	\frac{(\sqrt{\bar{c}_i}-\sqrt{{c}_i})B_{ij}^{BD}(\bar{\bm{c}})
	\sqrt{\bar{c}_j}}{\sqrt{\bar{c}_i}}\bigg| \\
  &\le C \sum_{i=1}^n |c_i - \bar{c}_i| + |\sqrt{c_i} - \sqrt{ \bar{c}_i} |
  \le C \sum_{i=1}^n |c_i - \bar{c}_i|.
\end{align*}
Thus, the choice $\delta = \beta(m)/(2C)$ gives
\begin{align*}
  \int_0^t\int_\Omega \chi(\bm{c})I_4 dxds
  &\le \frac{\beta(m)}{2}\int_0^t\int_\Omega\chi(\bm{c})\sum_{i=1}^n  
	|\na(c_i-\bar{c}_i)|^2 dxds \\
  &\phantom{xx}{} + C \int_0^t\int_\Omega
	\chi(\bm{c})\sum_{i=1}^n |c_i - \bar{c}_i|^2 dxds.
\end{align*}
Finally, we use definition \eqref{4.relfundef} of $p_i(c_i|\bar{c}_i)$
and Hypothesis \eqref{hyp.fnh} to estimate
\begin{align*}
  |p_i(c_i|\bar{c}_i)| &= \bigg|(c_i - \bar{c}_i)^2 \int_0^1 \int_0^s 
	p_i''(\tau c_i + (1-\tau)\bar{c}_i) d\tau ds \bigg| \\
  &\le K_2 (c_i - \bar{c}_i)^2 \int_0^1 \int_0^s h_i''(\tau c_i + (1-\tau) 
	\bar{c}_i) d\tau ds =  K_2 h_i (c_i|\bar{c}_i).
\end{align*}
In turn, this implies that
$$
  \int_0^t\int_\Omega \chi (\bm{c})  I_5  dxds
  \le C \int_0^t\int_\Omega\chi(\bm{c})\sum_{i=1}^n h_i(c_i|\bar{c}_i) dxds.
$$
Summarizing the previous computations and using Lemma \ref{lem.relenes}, 
we conclude that
\begin{equation} \label{4.step3}
  J_H \le - \frac{\beta(m)}{2}\int_0^t\int_\Omega\chi(\bm{c}) \sum_{i=1}^n 
	|\na(c_i - \bar{c}_i)|^2 dx ds
  + C \int_0^t\int_\Omega\chi(\bm{c})\sum_{i=1}^n h_i(c_i|\bar{c}_i) dxds.
\end{equation}

{\em Step 4: End of the proof.} We combine the differential inequality 
\eqref{4.split} with the estimations \eqref{4.step2} and \eqref{4.step3} to obtain
\begin{align}\label{4.improved}
	H(\bm{c}&(t)|\bar{\bm{c}}(t)) 
	  + \frac12\int_0^t\int_\Omega(1-\chi(\bm{c}))
	  \bm{Z}^T B^{BD}(\bm{c})\bm{Z} dxds \\
	  &\phantom{xxxxxx}{}+ \frac{\beta(m)}{2}
	  \int_0^t\int_\Omega \chi(\bm{c})
		\sum_{i=1}^n |\na(c_i-\bar{c}_i)|^2dxds \nonumber \\
	  &\le H(\bm{c}^0|\bar{\bm{c}}^0) + C\int_0^t H(\bm{c}|\bar{\bm{c}})ds, \nonumber
  \end{align}  
The constant $C>0$ depends in particular on $m$ and
the $L^\infty(0,T;W^{2,\infty}(\Omega))$ norm of $\bar{c}_j$, $j=1,\ldots,n$.
The proof of Theorem \ref{thm.gws} finishes after applying Gronwall's inequality.

\begin{remark}\rm
Inequality \eqref{4.improved} leads to a slightly stronger version of the
relative entropy inequality than stated in Theorem \ref{thm.gws}. However, we obtain
gradient estimates only on the set $\{c_*>m/2\}$, while on $\{c_*\le m/2\}$,
the quadratic form $\bm{Z}^TB^{BD}(\bm{c})\bm{Z}$ generally does not
lead to a control of the $L^2$-norm of $\na c_i$. 
\qed
\end{remark}


\section{Examples}\label{sec.exam}

We present some examples for the generalized Maxwell--Stefan system
\eqref{1.gms1}--\eqref{1.gms2} satisfying Assumptions (B1)--(B4).

\subsection{A cross-diffusion system for thin-film solar cells}

Thin-film crystalline solar cells can be fabricated by the so-called physical
vapor deposition process. This process produces a metal vapor that can be deposited
on electrically conductive materials as a thin coating. It is shown in
\cite{BaEh18} that the evolution of the volume fractions of the thin-film components
can be described by the cross-diffusion system
\begin{equation}\label{pvd1}
  \pa_t c_i = \diver\bigg(\sum_{j=1}^n a_{ij}(u_j\na u_i-u_i\na u_j)\bigg), 
	\quad i=1,\ldots,n,
\end{equation}
where $a_{ij}=a_{ji}>0$ for $i,j=1,\ldots,n$, and $\sum_{i=1}^n c_i=1$. 
This model can be formulated 
as a generalized Maxwell--Stefan system. Indeed, let $h_i(c_i)=c_i(\log c_i-1)$
and $K_{ij}(\bm{c}) = \sum_{j=1}^n\sqrt{c_i}A_{ij}^{BD}(\bm{c})\sqrt{c_j}$ 
for $i,j=1,\ldots,n$,
where $A(\bm{c})$ is given by \eqref{1.A} with $D_{ij}=1/a_{ij}$.
Then $B(\bm{c})=A^{BD}(\bm{c})$ (see \eqref{1.AK}) and hence
$B^{BD}(\bm{c})=A(\bm{c})$. Equation \eqref{4.eqABD} becomes
\begin{equation}\label{pvd2}
  \pa_t c_i = \diver\bigg(\sum_{j=1}^n \sqrt{c_i}A_{ij}(\bm{c})\sqrt{c_j}
	\na\log c_j\bigg), \quad i=1,\ldots,n.
\end{equation}
Because of \eqref{1.A}, the mobility matrix $(\sqrt{c_i}A_{ij}(\bm{c})\sqrt{c_j})$
reads as
$$
  \sqrt{c_i}A_{ij}(\bm{c})\sqrt{c_j} = \left\{\begin{array}{ll}
	\sum_{k\neq i} a_{ik}c_ic_k &\quad\mbox{if }i=j, \\
	-a_{ij}c_ic_j &\quad\mbox{if }i\neq j,
	\end{array}\right.
$$
and an elementary computation shows that \eqref{pvd2} can actually be written as
\eqref{pvd1}. 

Although it can be checked that the matrix
$B(\bm{c})=A^{BD}(\bm{c})$ satisfies Assumptions (B1)--(B4),
we can here directly verify the statements of Lemma \ref{lem.AK}.
Definition \eqref{1.A} of $A(\bm{c})$ immediately implies that
$B_{ij}^{BD}(\bm{c})$ is bounded and Lipschitz continuous on $[0,1]^n$.
Property \eqref{4.ABDpd} follows from \eqref{2.A} in Lemma \ref{lem.A} with
$\lambda(m)=\mu$. Hence, the weak-strong uniqueness property holds for this
model. 

\subsection{A tumor-growth model}
 
The growth of a symmetric avascular tumor can be modeled on the mechanical level
by diffusion fluxes of the tumor cells, the extracellular matrix (ECM), and
the interstitial fluid (water, nutrients, etc.). The model was suggested in
\cite{JaBy02} and analyzed in \cite{JuSt13}. The evolution of the volume fractions 
$c_i$ of the tumor cells, ECM, and interstitial fluid is given by
(see, e.g., \cite[Section 4.2]{Jue16})
\begin{align}
  \pa_t c_i + \diver(c_iu_i) &= 0, \quad i=1,\ldots,3, \label{tumor1} \\
	\na(c_iP_i) + c_i\na p &= -\sum_{j=1}^3 k_{ij}c_ic_j(u_i-u_j), \quad i=1,2, 
	\label{tumor2} \\
	c_3\na p &= -\sum_{j=1}^3 k_{ij}c_ic_j(u_i-u_j), \label{tumor3}
\end{align}
where $k_{ij}=k_{ji}>0$ for $i,j=1,2,3$, 
the partial pressures $P_1$, $P_2$ and the phase pressure $p$ are given by
$$
  P_1 = c_1, \quad P_2 = \beta c_2(1+\theta c_1), \quad
	p = -c_1P_1-c_2P_2,
$$
$\beta>0$, $\theta>0$ are suitable parameters, and it holds that $\sum_{i=1}^3c_i=1$.

We claim that \eqref{tumor1}--\eqref{tumor3} can be formulated
as a generalized Maxwell--Stefan system. We define the entropy densities
as in the previous example, $h_i(c_i)=c_i(\log c_i-1)$, $i=1,2,3$. 
With the matrix
$$
  W(\bm{c}) = \begin{pmatrix}
	2c_1(1-c_1)-\beta\theta c_1c_2^2 & -2\beta c_1c_2(1+\theta c_1) & 0 \\
  -2c_1c_2+\beta \theta(1-c_2)c_2^2 & 2\beta c_2(1-c_2)(1+\theta c_1) & 0 \\
  -2c_1c_3-\beta\theta c_3c_2^2 & -2\beta c_3c_2(1+\theta c_1) & 0
	\end{pmatrix},
$$
the left-hand side of \eqref{tumor2}--\eqref{tumor3} can be written in a 
more concise form:
$$
  \begin{pmatrix}
	\na(c_1P_1) + c_1\na p \\ \na(c_2P_2) + c_2\na p \\ c_3\na p
	\end{pmatrix}
	= W(\bm{c})\begin{pmatrix} \na c_1 \\ \na c_2 \\ \na c_3 \end{pmatrix}.
$$
Let the matrix $A(\bm{c})$ be given by \eqref{1.A} with $D_{ij}=1/k_{ij}$. 
Then the right-hand side of
\eqref{tumor2}--\eqref{tumor3} equals (also see \eqref{1.eqA})
$$
  -\sum_{j=1,\,j\neq i}^3 \frac{c_ic_j}{D_{ij}}(u_i-u_j)
	= -\sum_{j=1}^3\sqrt{c_i}A_{ij}(\bm{c})\sqrt{c_j}u_j.
$$
Thus, inverting 
$$
  \sum_{j=1}^n W_{ij}(\bm{c})\na c_j 
	= -\sum_{j=1}^3\sqrt{c_i}A_{ij}(\bm{c})\sqrt{c_j}u_j, \quad i=1,2,3,
$$
(which is the same as \eqref{tumor2}--\eqref{tumor3}) yields
$$
  \sqrt{c_i}u_i = -\sum_{j,k=1}^3 A_{ij}^{BD}(\bm{c})\frac{1}{\sqrt{c_j}}
	W_{jk}(\bm{c})\na c_k, \quad i=1,2,3,
$$
and system \eqref{tumor1}--\eqref{tumor3} can be written for $i=1,2,3$ as
\begin{align*}
  \pa_t c_i &= \diver\bigg(\sum_{j,k=1}^3\sqrt{c_i}A_{ij}^{BD}(\bm{c})
	\frac{1}{\sqrt{c_j}}W_{jk}(\bm{c})\na c_k\bigg) \\
	&= \diver\bigg(\sum_{j,k,\ell=1}^3\sqrt{c_i}A_{ij}^{BD}(\bm{c})
	\frac{1}{\sqrt{c_j}}W_{jk}(\bm{c})\sqrt{c_k}(P_L)_{k\ell}\sqrt{c_\ell}
	\na\log c_\ell\bigg),
\end{align*}
recalling definition \eqref{2.PL} of $P_L$. Thus,
\begin{align*}
  & \pa_t c_i = \diver\bigg(\sum_{\ell=1}^n \sqrt{c_i}R_{i\ell}(\bm{c})\sqrt{c_\ell}
	\na\log c_\ell\bigg), \quad\mbox{where} \\
	& R_{i\ell}(\bm{c}) := \sum_{j,k=1}^3 A_{ij}^{BD}(\bm{c})
	\frac{1}{\sqrt{c_j}}W_{jk}(\bm{c})\sqrt{c_k}(P_L)_{k\ell}, \quad i,\ell=1,2,3.
\end{align*}
Also in this case, it is more convenient to check the statements of Lemma \ref{lem.AK}
instead of Assumptions (B1)--(B4). Notice that $W(\bm{c})$ is not symmetric,
so $R(\bm{c})$ is not symmetric either. The elements $R_{ij}(\bm{c})$
are bounded and Lipschitz continuous, since $A_{ij}^{BD}(\bm{c})$ and
$W_{ij}(\bm{c})$ have these properties. The second statement of Lemma \ref{lem.AK},
namely property \eqref{4.ABDpd}, is verified only for a special example that
was considered in \cite{JuSt13}.

\begin{lemma}
Let $k_{ij}=1$ for $i,j=1,2,3$ and $0\le\theta<4/\sqrt{\beta}$. Then,
with $m=\min_{i=1,2,3}c_i>0$, there exists $\lambda(m)>0$ such that
$$
  \bm{z}^T R(\bm{c})\bm{z} \ge \lambda(m)|P_L\bm{z}|^2 \quad\mbox{for all }
	\bm{z}\in\R^3.
$$
\end{lemma}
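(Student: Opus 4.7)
\emph{Plan for the proof.} The plan is to exploit a remarkable simplification that occurs when $k_{ij}=1$. Since then all $D_{ij}=1$, definition~\eqref{1.A} yields $A_{ii}(\bm{c})=1-c_i$ and $A_{ij}(\bm{c})=-\sqrt{c_ic_j}$, which coincides with $(P_L)_{ij}=\delta_{ij}-\sqrt{c_ic_j}$. Hence $A(\bm{c})=P_L$, and since $P_L+P_{L^\perp}=I$ the Bott--Duffin inverse collapses to
\begin{equation*}
  A^{BD}(\bm{c})=P_L(P_L^2+P_{L^\perp})^{-1}=P_L.
\end{equation*}
With $D:=\operatorname{diag}(\sqrt{c_1},\sqrt{c_2},\sqrt{c_3})$, the matrix $R(\bm{c})$ simplifies to $P_L D^{-1}W(\bm{c})D\,P_L$. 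Using the symmetry of $P_L$ and setting $\bm{v}:=P_L\bm{z}$, I reduce the claim to showing $\bm{v}^T M(\bm{c})\bm{v}\ge \lambda(m)|\bm{v}|^2$ for every $\bm{v}\in L$, where $M(\bm{c}):=D^{-1}W(\bm{c})D$.

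Next, I would exploit that the last column of $W(\bm{c})$ vanishes, so the last column of $M(\bm{c})$ does too, and $\bm{v}^T M\bm{v}$ contains no $v_3^2$ contribution. Solving the constraint $\sqrt{c_1}v_1+\sqrt{c_2}v_2+\sqrt{c_3}v_3=0$ for $v_3$ and substituting reduces $\bm{v}^T M\bm{v}$ to a quadratic form $\widetilde{\bm{v}}^T \widetilde M(\bm{c})\widetilde{\bm{v}}$ in $\widetilde{\bm{v}}=(v_1,v_2)$. The diagonal entries read $\widetilde M_{11}=M_{11}-\sqrt{c_1/c_3}\,M_{31}$ and $\widetilde M_{22}=M_{22}-\sqrt{c_2/c_3}\,M_{32}$, and after the $\beta\theta$-contributions from the pairs $(W_{11},W_{31})$ and $(W_{22},W_{32})$ telescope and the remaining off-diagonal terms simplify, a direct computation yields
\begin{equation*}
  \widetilde M(\bm{c})=\begin{pmatrix} 2c_1 & \tfrac{\beta\theta}{2}\sqrt{c_1}\,c_2^{3/2} \\ \tfrac{\beta\theta}{2}\sqrt{c_1}\,c_2^{3/2} & 2\beta c_2(1+\theta c_1)\end{pmatrix}.
\end{equation*}

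The final step is to check positive definiteness of $\widetilde M(\bm{c})$ and convert back. Its determinant equals $(\beta c_1c_2/4)\bigl[16(1+\theta c_1)-\beta\theta^2 c_2^2\bigr]$, which is strictly positive under the hypothesis $\theta<4/\sqrt{\beta}$ since $c_2\le 1$, and together with positive diagonal entries this gives positive definiteness of $\widetilde M(\bm{c})$. The lower bound $\det\widetilde M/\operatorname{tr}\widetilde M$ for the smallest eigenvalue then furnishes a constant $\lambda_0(m)>0$ (depending polynomially on $m$) with $\widetilde{\bm{v}}^T\widetilde M\widetilde{\bm{v}}\ge \lambda_0(m)(v_1^2+v_2^2)$ whenever $c_i\ge m$. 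The constraint gives $v_3^2\le(2/m)(v_1^2+v_2^2)$, so $|\bm{v}|^2\le(1+2/m)(v_1^2+v_2^2)$, and the lemma follows with $\lambda(m):=\lambda_0(m)/(1+2/m)$. I expect the main technical obstacle to be the explicit reduction to $\widetilde M(\bm{c})$: the delicate cancellations in the diagonal and off-diagonal entries have to be tracked carefully, as they isolate precisely the term $\beta\theta^2 c_2^2$ responsible for the sharp threshold $\theta<4/\sqrt{\beta}$.
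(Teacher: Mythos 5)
Your proof is correct and follows essentially the same route as the paper: both reduce via $A^{BD}(\bm{c})=P_L$ to a quadratic form for $\bm{v}=P_L\bm{z}\in L$, eliminate $v_3$ using the constraint, arrive at the identical $2\times 2$ quadratic form $2c_1v_1^2+\beta\theta\sqrt{c_1}c_2^{3/2}v_1v_2+2\beta c_2(1+\theta c_1)v_2^2$, and use the sign of the discriminant under $\theta<4/\sqrt{\beta}$. The only difference is cosmetic: you lower-bound the smallest eigenvalue by $\det\widetilde M/\operatorname{tr}\widetilde M$ and then bound $v_3^2$ by $(2/m)(v_1^2+v_2^2)$, whereas the paper absorbs $(\sqrt{c_1}y_1+\sqrt{c_2}y_2)^2$ into a $\kappa/3$-factor to recover $c_3y_3^2$; both yield a valid $\lambda(m)>0$.
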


\begin{proof}
The assumption $k_{ij}=1$ implies that $A(\bm{c})=P_L$ and hence
$A^{BD}(\bm{c})=P_L(A(\bm{c})P_L+P_{L^\perp})^{-1}=P_L$. Suppose that
for any $\bm{y}\in\R^3$ satisfying $\bm{y}\in L$ (i.e.\ $\sqrt{\bm{c}}\cdot\bm{y}=0$),
we have
\begin{equation}\label{4.Qpd}
  \sum_{i,j=1}^3\frac{1}{\sqrt{c_i}}W_{ij}(\bm{c})\sqrt{c_j}y_iy_j
	\ge \lambda(m)|\bm{y}|^2.
\end{equation}
Then, for $\bm{z}\in\R^3$ and $\bm{y}=P_L\bm{z}\in L$,
$$
  \bm{z}^T R(\bm{c})\bm{z}
	= \sum_{i,j=1}^3(P_L\bm{z})_i\frac{1}{\sqrt{c_i}}W_{ij}(\bm{c})\sqrt{c_j}
	(P_L\bm{z})_j \ge \lambda(m)|P_L\bm{z}|^2,
$$
which proves the lemma. 

It remains to verify \eqref{4.Qpd}. Using 
$y_3=-(\sqrt{c_1}y_1+\sqrt{c_2}y_2)/\sqrt{c_3}$, we calculate
$$
  \sum_{i,j=1}^3\frac{1}{\sqrt{c_i}}W_{ij}(\bm{c})\sqrt{c_j}y_iy_j
	= 2\beta(1+\theta c_1)(\sqrt{c_2} y_2)^2 + \beta\theta c_2(\sqrt{c_1}y_1)
	(\sqrt{c_2}y_2) + 2(\sqrt{c_1}y_1)^2.
$$
Since $0\le\theta<4/\sqrt{\beta}$, the discriminant of the quadratic form
is negative and there exists $\kappa>0$ such that
\begin{align*}
  \sum_{i,j=1}^3\frac{1}{\sqrt{c_i}}W_{ij}(\bm{c})\sqrt{c_j}y_iy_j
	&\ge \kappa(c_1y_1^2 + c_2 y_2^2) 
	\ge \frac{\kappa}{3}\big(c_1y_1^2 + c_2y_2^2 
	+ (\sqrt{c_1}y_1+\sqrt{c_2}y_2)^2\big) \\
	&= \frac{\kappa}{3}(c_1y_1^2+c_2y_2^2+c_3y_3^2)
	\ge \frac{\kappa}{3}\Big(\min_{i=1,2,3}c_i\Big)(y_1^2+y_2^2+y_3^2),
\end{align*}
proving the claim with $\lambda(m)=\kappa m/3$.
\end{proof}

We deduce from the previous lemma that the weak-strong uniqueness property
holds for the tumor-growth model if $k_{ij}=1$ for $i,j=1,2,3$ and 
$0\le\theta<4/\sqrt{\beta}$. The latter condition is necessary to achieve
the global existence of weak solutions, since it guarantees the positive
semidefiniteness of the mobility matrix; see \cite{JuSt13} for details.

\subsection{A multi-species porous-medium-type model}

Another model is a generalization of the first example to illustrate that
also non-logarithmic entropies may be considered. We choose
$h_i(c_i)=c_i^{\gamma}/(\gamma-1)$ with $\gamma>1$ and $A(\bm{c})$ as
in \eqref{1.A}. The partial pressure
becomes $p_i=c_ih'_i(c_i)-h_i(c_i)=c_i^\gamma$, and equation \eqref{pvd2}
reads here as
$$
  \pa_t c_i = \diver\bigg(\sum_{j=1}^n\sqrt{c_i}A_{ij}(\bm{c})\sqrt{c_j}
	\na h_j'(c_j)\bigg)
	= \frac{\gamma}{\gamma-1}\diver\bigg(\sum_{j=1}^n\frac{1}{D_{ij}}
	(c_j\na c_i^{\gamma-1} - c_i\na c_j^{\gamma-1})\bigg).
$$
Hence, the weak-strong property holds for this model.

\subsection{Maxwell-Stefan system with different molar masses} 

In equations \eqref{1.eq}, we have implicitly assumed that all molar masses
of the species are the same. We show that the weak-strong uniqueness property also 
holds for the model proposed in \cite{BCH54,BoDr15} without this assumption. In the 
case of different molar masses $M_i$, we need to distinguish between the mass 
densities $\rho_i$ and the molar concentrations $c_i=\rho_i/M_i$. 
The Maxwell--Stefan equations read for $i=1,\ldots,n$ as 
\begin{equation}\label{5.4}
  \pa_t\rho_i + \diver(\rho_i u_i) = 0, \quad
  -\sum_{j=1}^n \frac{c_ic_j}{c^2 D_{ij}}(u_i-u_j) 
	= \rho_i\na\frac{\delta H}{\delta \rho_i}(\bm{\rho}) 
	- \rho_i \sum_{j=1}^n \rho_j\na\frac{\delta H}{\delta \rho_j}(\bm{\rho}),
\end{equation}
where $c=\sum_{i=1}^n c_i$. 
As before, the restriction $\sum_{j= 1}^n \rho_j = 1$ inherited from the initial data
is imposed. The second equation can be rewritten as 
$$
	-\sum_{j=1}^n \frac{\rho_i\rho_j}{\widetilde{D}_{ij}(\bm{\rho})} (u_i-u_j) 
	= \rho_i\na\frac{\delta H}{\delta \rho_i}(\bm{\rho}) 
	- \rho_i \sum_{j=1}^n \rho_j\na\frac{\delta H}{\delta \rho_j}(\bm{\rho}),
$$
where $\widetilde{D}_{ij}(\bm{\rho}) = c^2M_iM_j = (\sum_{k=1}^n \rho_k/M_k)^2M_iM_j$. 
Due to
$$
  \sum_{k=1}^n \frac{\rho_k}{M_k} \ge 
	\sum_{k=1}^n \frac{\rho_k}{\max_{\ell=1,\ldots,n}{M_\ell}} 
	= \frac{1}{\max_{\ell=1,\ldots,n}M_\ell},
$$ 
the coefficients $\widetilde{D}_{ij}(\bm{\rho})$ are uniformly bounded from below.
Since the proof of Lemma \ref{lem.A} only relies on the uniform boundness of $D_{ij}$,
Lemma \ref{lem.A} also  holds for the following matrix $\widetilde{A}(\bm{c})$, 
defined similarly as in \eqref{1.A}:
\begin{equation*}
	\widetilde{A}_{ij}(\bm{\rho}) = \left\{\begin{array}{ll}
	  \sum_{k=1,\,k\neq i}^n \rho_k/\widetilde{D}_{ik}(\bm{\rho}) &\quad\mbox{if }i=j, \\
	  -\sqrt{\rho_i\rho_j}/\widetilde{D}_{ij}(\bm{\rho}) &\quad\mbox{if }i\neq j.
	  \end{array}\right.
\end{equation*}
Therefore, the weak-strong uniqueness holds if $H(\bm{c})$ satisfies the assumptions of 
Theorem \ref{thm.gws}.

Recalling that $H(\bm{\rho}) = \sum_{i=1}^n \int_\Omega h_i(\rho_i) dx$, we may
formulate the entropy in terms of the concentrations $\bm{c}$ as 
$\eta(\bm{c}) = \sum_{i=1}^n \int_\Omega \eta_i(c_i) dx,$ where 
$\eta_i(c_i) = h_i(\rho_i/M_i)$. Then we can rewrite the second equation in 
\eqref{5.4} as 
$$
	  -\sum_{j=1}^n \frac{c_ic_j}{c^2D_{ij}}(u_i-u_j) 
		= c_i \na\mu_i - c_i M_i \sum_{j=1}^n c_j \na\mu_j,
$$
where $\mu_i = \eta_i'(c_i)$ is the molar-based chemical potential. 
Using the Gibbs--Duhem equation $\na p = \sum_{i=1}^n c_j\na\mu_j$, where $p$ 
is the pressure, the above equation can be put into the form 
$$
	  -\sum_{j=1}^n \frac{c_ic_j}{c^2 D_{ij}}(u_i-u_j) 
		= c_i\nabla \mu_i - \rho_i \nabla p,
$$
which is \cite[Formula (203)]{BoDr15}. Yet another formulation in terms of the
molar fractions $X_i=c_i/c$ is
$$
	-\sum_{j=1}^n \frac{c_ic_j}{c^2 D_{ij}}(u_i-u_j) 
	= c_i\nabla_p\widetilde{\mu}_i + (\phi_i - \rho_i)\nabla p,
$$
where $\widetilde{\mu}_i$ is given by $\widetilde{\mu}_i(p,X_1,\ldots,X_n) 
= \mu_i(c_i)$, $\nabla_p \widetilde{\mu}_i := \sum_{j=1}^n(\pa\widetilde{\mu}_i/
\partial X_j)\nabla X_j$, and $\phi_i:=\pa\widetilde{\mu}_i/\pa p$ is the 
volume fraction.

A simple choice is the entropy 
$$
  \eta_i(c_i) = c_i\log c_i - c_i, \quad i=1,\ldots,n,
$$
corresponding to $h_i(\rho_i) = (\rho_i/M_i)(\log(\rho_i/M_i) - 1)$.
It leads to $\mu_i=\log c_i$, $p=c$, and the model 
$$
	-\sum_{j=1}^n \frac{c_ic_j}{c^2 D_{ij}}(u_i-u_j) = \nabla c_i - \rho_i \nabla c.
$$
The existence of local strong solutions to this model can be proved as in
\cite{Bot11}, while the existence of global weak solutions was shown in
\cite{ChJu15}.


\begin{appendix}
\section{The Bott--Duffin inverse}\label{app}

For the convenience of the reader, we recall the definition and some properties
of the Bott--Duffin inverse. Let $A\in\R^{n\times n}$ be an arbitrary matrix and
$L\subset\R^n$ be a subspace. 
The Bott--Duffin inverse is introduced in connection
to the solution of the constrained inversion problem 
(see \cite{BD53}, \cite[Ch 2.10]{BIG74})
\begin{equation}\label{A.1}
  A x + y = b, \quad x \in L, \quad y \in L^\perp.
\end{equation}
Let $P_L$ and $P_{L^\perp}$ be
the projection operators onto $L$ and $L^\perp$, respectively.
The set of solutions of \eqref{A.1} is the same as the set of solutions to
$(AP_L + P_{L^\perp})z = b$,
and $(x,y)$ solves \eqref{A.1} if and only if 
$x = P_L z$ and $y = P_{L^\perp} z = b - AP_L z$.
Then, if the matrix $AP_L+P_{L^\perp}$ is invertible, 
we define the {\em Bott--Duffin inverse} of $A$ with respect to $L$ by
\begin{equation}\label{A.BD}
  A^{BD} := P_L(AP_L+P_{L^\perp})^{-1}.
\end{equation}
and the solution to \eqref{A.1} is expressed in the form
\begin{equation}\label{A.3}
x = A^{BD} b \, , \quad y = b - Ax.
\end{equation}
If $L=\ran(A)$ and $A$ is symmetric, the Bott--Duffin inverse 
is the same as the group inverse, which was investigated in the context of
Maxwell--Stefan systems in \cite{BoDr20}.

Let $A$ be symmetric. We call $A$ {\em $L$-positive definite} if
$\bm{z}^TA\bm{z}>0$ for all $\bm{z}\in L\setminus\{\bm{0}\}$. 
For this class of matrices, a generalized Bott--Duffin inverse is defined in 
\cite{Yon90}, which coincides with the classical Bott--Duffin inverse when 
$AP_L+P_{L^\perp}$ is invertible. The following result is proved in 
\cite[Lemma 2c and 1b]{Yon90}.

\begin{lemma}\label{lem.bott1}
Let $A$ be symmetric and $L$-positive definite. Then 
\begin{align*}
  {\rm (i)} &\ A^{BD}P_{L^\perp} = 0, \\
	{\rm (ii)} &\ \ran(AP_L+P_{L^\perp}) = P_L\ran(A)\otimes L^\perp, \quad
	\ker(AP_L+P_{L^\perp}) = \ker(AP_L)\cap L.
\end{align*}
\end{lemma}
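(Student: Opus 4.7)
The plan is to exploit the decomposition $\R^n = L \oplus L^\perp$ together with the following consequence of $L$-positive definiteness: for $z \in L$, one has $z^T A z = z^T P_L A z$ (using $P_L^T = P_L$ and $P_L z = z$), so $P_L A z = 0$ with $z \in L$ forces $z = 0$. In particular, $P_L A|_L : L \to L$ is injective and, by finite-dimensionality, bijective. As a warm-up I would first note that $AP_L + P_{L^\perp}$ is invertible: any $x = x_L + x_\perp$ in its kernel satisfies $A x_L + x_\perp = 0$, so applying $P_L$ gives $P_L A x_L = 0$, hence $x_L = 0$, and then $x_\perp = 0$. This ensures the classical inverse in \eqref{A.BD} exists.

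For part (i), the key is the direct algebraic identity
\[
  (AP_L + P_{L^\perp}) P_{L^\perp} = A P_L P_{L^\perp} + P_{L^\perp}^2 = P_{L^\perp},
\]
using $P_L P_{L^\perp} = 0$ and $P_{L^\perp}^2 = P_{L^\perp}$. Left-multiplying by the inverse yields $(AP_L + P_{L^\perp})^{-1} P_{L^\perp} = P_{L^\perp}$, whence $A^{BD} P_{L^\perp} = P_L P_{L^\perp} = 0$.

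For part (ii), I would handle the range and kernel separately. Writing $(AP_L + P_{L^\perp})(x_L + x_\perp) = P_L A x_L + (P_{L^\perp} A x_L + x_\perp)$ places the image inside $P_L \ran(A) \oplus L^\perp$ at once. The reverse inclusion is obtained by prescribing $y \in P_L \ran(A)$ and $z \in L^\perp$, solving $P_L A x_L = y$ via the bijectivity of $P_L A|_L$, and then choosing $x_\perp := z - P_{L^\perp} A x_L$, so that substitution recovers $y + z$. For the kernel identity, $\ker(AP_L) \cap L \subseteq \ker(AP_L + P_{L^\perp})$ is immediate since $P_{L^\perp}$ vanishes on $L$; the reverse follows from the invertibility argument of the first paragraph, which shows that under $L$-positive definiteness both kernels collapse to $\{0\}$.

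The main obstacle is not technical but conceptual: pinpointing where $L$-positive definiteness is genuinely required—namely, to turn $P_L A|_L$ into a bijection of $L$ (needed for surjectivity in the range formula) and to force the kernels in (ii) to vanish. Once this observation is in hand, every remaining step is a short algebraic manipulation, and the substantive content of (ii) under the stated hypotheses is the range identity, the kernel identity being recorded mainly for consistency with the more general formulation of \cite{Yon90}.
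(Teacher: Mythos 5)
Your proof is correct, and it is a genuinely different route from the paper, which does not prove Lemma~\ref{lem.bott1} at all but simply refers to \cite[Lemma 2c and 1b]{Yon90}. The engine of your argument is the observation that strict $L$-positive definiteness makes $P_L A|_L : L \to L$ injective (hence bijective by dimension count), so that $AP_L + P_{L^\perp}$ is invertible. The remaining steps --- $(AP_L + P_{L^\perp})P_{L^\perp} = P_{L^\perp}$ for part (i), and the decomposition $(AP_L + P_{L^\perp})(x_L + x_\perp) = P_L A x_L + (P_{L^\perp}Ax_L + x_\perp)$ together with the bijectivity of $P_L A|_L$ for part (ii) --- are short and correct.

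One remark on your closing paragraph: you flag that the kernel identity degenerates to $\{0\}=\{0\}$ under the stated hypothesis, but the range identity degenerates in exactly the same way. Your bijectivity argument gives $P_L\ran(A)=L$, so $P_L\ran(A)\oplus L^\perp = \R^n$, while the left-hand side is also $\R^n$ because $AP_L+P_{L^\perp}$ is invertible. The formula in \cite{Yon90} carries content precisely because it is stated under a weaker hypothesis (essentially $L$-positive \emph{semi}definiteness), where $AP_L+P_{L^\perp}$ may be singular and neither side of (ii) is all of $\R^n$ or $\{0\}$; note also that the paper's proof of Lemma~\ref{lem.bott2} quotes (ii) to \emph{derive} invertibility, which makes sense only under that weaker reading. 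Under the paper's literal strict definition, Lemma~\ref{lem.bott1} is essentially a repackaging of the invertibility you establish up front, so your proof covers the case actually used, but it is worth being aware of the gap between the definition as written and the scope of Yongli's result. (The symbol $\otimes$ in the statement should be read as the direct sum $\oplus$, as you do.)
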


It follows from property (i) that $A^{BD}$ can be formulated as
\begin{equation}\label{app.bd}
  A^{BD} = P_L(AP_L+P_{L^\perp})^{-1}(P_L+P_{L^\perp}) 
	= P_L(AP_L+P_{L^\perp})^{-1}P_L = A^{BD}P_L.
\end{equation}

\begin{lemma}\label{lem.bott2}
Let $A$ be symmetric and $L=\ran A$, $L^\perp=\ker A$. 
Then $AP_L=A$, $P_LA=A$, $A^{BD}$ is well defined and symmetric.
\end{lemma}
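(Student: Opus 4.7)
The plan is to deduce everything from two elementary observations: first, that the symmetry of $A$ together with $L=\ran A$ and $L^\perp=\ker A$ forces $AP_L=A$ and $P_LA=A$; and second, that under these identities the matrix $AP_L+P_{L^\perp}$ simplifies to the symmetric matrix $A+P_{L^\perp}$, from which both invertibility and symmetry of $A^{BD}$ follow by a short commutation argument.

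First I would prove $AP_L=A$: for any $\bm{x}\in\R^n$, decompose $\bm{x}=P_L\bm{x}+P_{L^\perp}\bm{x}$, and observe that $P_{L^\perp}\bm{x}\in L^\perp=\ker A$, so $A\bm{x}=AP_L\bm{x}$. The identity $P_LA=A$ then follows by transposition, using the symmetry of $A$ and of $P_L$: $P_LA=(A^TP_L^T)^T=(AP_L)^T=A^T=A$. Combining these gives
\[
  AP_L+P_{L^\perp}=A+P_{L^\perp}=P_LA+P_{L^\perp},
\]
so this matrix is symmetric, and it also commutes with $P_L$, since
$(AP_L+P_{L^\perp})P_L=AP_L$ (using $P_{L^\perp}P_L=0$) and $P_L(AP_L+P_{L^\perp})=P_LAP_L=AP_L$ (using $P_LA=A$).

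Next I would verify invertibility of $AP_L+P_{L^\perp}$ by showing its kernel is trivial. If $(AP_L+P_{L^\perp})\bm{x}=0$, applying $P_{L^\perp}$ gives $P_{L^\perp}\bm{x}=0$ (since $P_{L^\perp}AP_L=0$ because $\ran A=L$), and then $AP_L\bm{x}=0$ forces $P_L\bm{x}\in\ker A\cap L=L^\perp\cap L=\{0\}$; hence $\bm{x}=0$. Therefore $A^{BD}=P_L(AP_L+P_{L^\perp})^{-1}$ is well defined.

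Finally, the symmetry of $A^{BD}$ follows from the commutation identity established above: since $P_L$ commutes with $AP_L+P_{L^\perp}$, it commutes with its inverse, so
\[
  (A^{BD})^T=\bigl((AP_L+P_{L^\perp})^{-1}\bigr)^T P_L^T
  =(AP_L+P_{L^\perp})^{-1}P_L=P_L(AP_L+P_{L^\perp})^{-1}=A^{BD}.
\]
There is no serious obstacle here; the only step that might be worth double-checking is the invertibility argument, where one must be careful that the condition $\ran A=L$ (equivalently, $L^\perp=\ker A$) rules out the possibility of a nonzero vector in $L$ being annihilated by $AP_L$.
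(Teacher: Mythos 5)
Your proof is correct, and it takes a route that is genuinely different from (and in one respect cleaner than) the paper's. The paper deduces $AP_L=A$, $P_LA=A$ directly from $L=\ran A$, invokes Lemma~\ref{lem.bott1}\,(ii) to conclude $\ker(AP_L+P_{L^\perp})=\ker(AP_L)\cap L=\{0\}$ for invertibility, and then obtains symmetry from the identity $A^{BD}=P_L(AP_L+P_{L^\perp})^{-1}P_L$ (equation~\eqref{app.bd}, which also rests on Lemma~\ref{lem.bott1}\,(i)), so that $A^{BD}$ is a symmetric ``sandwich'' of a symmetric matrix. You instead prove everything self-contained: you show invertibility by hand (decompose, apply $P_{L^\perp}$ using $\ran A\subseteq L$ to kill the $AP_L$ term, then use $\ker A\cap L=\{0\}$), and you derive symmetry from the observation that $P_L$ commutes with $AP_L+P_{L^\perp}$, hence with its inverse. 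Your route has the advantage of not leaning on Lemma~\ref{lem.bott1}, whose stated hypothesis is $L$-positive definiteness of $A$ -- a property that the hypotheses of Lemma~\ref{lem.bott2} do not directly assert (only symmetry and $L=\ran A$); the paper's citation of Lemma~\ref{lem.bott1} thus glosses over a hypothesis that your argument sidesteps entirely. The paper's symmetry argument is marginally shorter once \eqref{app.bd} is in hand (no commutation lemma needed, just $(P_LMP_L)^T=P_LMP_L$), but your commutation observation is the price you pay for independence from Lemma~\ref{lem.bott1}. One small remark: for $P_LA=A$ you argue by transposition, but the more direct route is that $\ran A=L$ already gives $P_L(A\bm{x})=A\bm{x}$ for all $\bm{x}$; both are fine.
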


\begin{proof}
The identities $AP_L=A$ and $P_LA=A$ follow immediately from $L=\ran A$,
We infer from property (ii) that
$$
	\ker(AP_L+P_{L^\perp}) = \ker(AP_L)\cap L = \ker(A)\cap L = L^\perp\cap L
		= \{0\},
$$
showing that $AP_L+P_{L^\perp}$ is invertible. The matrix $AP_L = P_LAP_L$ is symmetric,
since $P_L$ and $A$ are symmetric. Also $P_{L^\perp}$ is symmetric,
so $AP_L+P_{L^\perp}$ and its inverse are symmetric too. Taking into account
\eqref{app.bd}, this implies that
$A^{BD} = P_L(AP_L+P_{L^\perp})^{-1}P_L$ is also symmetric.
\end{proof}

In our context, we are interested in the constrained inversion 
$$
  A x = b,  \quad x \in L,
$$
where $A$ is a symmetric positive semidefinite matrix, 
with $L = \ran(A)$ and thus $L^\perp = \ker(A)$, and $b \in L$.
Lemma \ref{lem.bott2} implies that $AP_L + P_{L^\perp}$ is invertible and 
$A^{BD}$ is well defined by \eqref{A.BD}. 
Because of \eqref{A.3}, we can express the inverse as
$x=A^{BD}b$ if $b\in L$.


\section{Pointwise estimates for entropy functions}\label{app.ent}

For the convenience of the reader, we recall the following lower bounds.

\begin{lemma}\label{lem.ent}
The following estimates hold for any $c$, $\bar{c}\in[0,1]$:
$$
  c\log\frac{c}{\bar{c}} - (c-\bar{c}) \ge \frac12(c-\bar{c})^2, \quad
	c\log\frac{c}{\bar{c}} - (c-\bar{c}) \ge (\sqrt{c}-\sqrt{\bar{c}})^2.
$$
\end{lemma}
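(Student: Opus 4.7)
\medskip

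\noindent\textbf{Proof proposal for Lemma \ref{lem.ent}.}
The plan is to treat the two inequalities separately, viewing the relative entropy density $\phi(c,\bar{c}) := c\log(c/\bar{c}) - (c-\bar{c})$ as a function of $c$ with $\bar{c}\in(0,1]$ fixed; the boundary cases $c=0$ or $\bar{c}=0$ will be handled by continuity and the convention $0\log 0 = 0$.

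For the first inequality I would apply Taylor's theorem with integral remainder at $c=\bar{c}$. A direct computation gives $\phi(\bar{c},\bar{c})=0$, $\partial_c\phi(\bar{c},\bar{c})=\log(\bar{c}/\bar{c})=0$, and $\partial_c^2\phi(c,\bar{c})=1/c$, so
\[
  \phi(c,\bar{c}) = \int_{\bar{c}}^{c}\frac{c-s}{s}\,ds.
\]
Since $0 < s \le 1$ implies $1/s \ge 1$, and since the integrand $(c-s)/s$ has the same sign as $c-s$ on the relevant interval regardless of whether $c\ge\bar{c}$ or $c<\bar{c}$, I can bound $(c-s)/s \ge (c-s)\cdot\mathrm{sgn}(c-\bar{c})\cdot \mathrm{sgn}(c-s)$-style; more cleanly, writing the integral in symmetric form I get $\phi(c,\bar{c})\ge \int_{\min(c,\bar{c})}^{\max(c,\bar{c})}|c-s|\,ds = \tfrac12(c-\bar{c})^2$, which is the first claim.

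For the second inequality I plan a substitution. Setting $a=\sqrt{c}$ and $b=\sqrt{\bar{c}}$ and factoring $c-\bar{c}=(a-b)(a+b)$, the inequality reduces after elementary algebra to
\[
  2a^2\log(a/b) - (a-b)(a+b) - (a-b)^2 \ge 0,
\]
i.e.\ $a\log(a/b) \ge a-b$ (dividing by $2a$ when $a>0$). Setting $t:=a/b$, this is the classical inequality $\log t \ge 1 - 1/t$ for $t>0$, which is immediate from the fact that $g(t):=\log t - 1 + 1/t$ satisfies $g(1)=0$ and $g'(t)=(t-1)/t^2$ changes sign only at $t=1$, so $g$ attains its minimum there.

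The main (mild) technical point will be the degenerate cases. If $c=0$ and $\bar{c}>0$, both inequalities reduce to $\bar{c}\ge \tfrac12\bar{c}^2$ and $\bar{c}\ge \bar{c}$, which hold on $[0,1]$; the case $\bar{c}=0$ forces $c=0$ by the finiteness convention. I expect no serious obstacle beyond bookkeeping these boundary configurations and making sure the Taylor remainder formula is applied with $c$, $\bar{c}$ strictly positive, which is the only regime where all quantities are nonsingular.
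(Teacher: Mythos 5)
Your proof is correct. For the first inequality you take essentially the same route as the paper: Taylor's theorem with integral remainder for $f(c)=c\log c$ together with $f''(s)=1/s\ge 1$ on $(0,1]$; the paper writes the remainder as the double integral $(c-\bar c)^2\int_0^1\int_0^\theta f''(s(c-\bar c)+\bar c)\,ds\,d\theta$ while you use the single-integral form $\int_{\bar c}^c (c-s)/s\,ds$, but these are the same estimate. For the second inequality your argument is genuinely different and, I would say, tighter. The paper defines the quotient $g(c)=(c\log c - c + 1)/(\sqrt{c}-1)^2$, asserts without calculation that $g$ is increasing, and concludes $g(c)\ge g(0)=1$; the monotonicity of $g$ is true but is not a one-line check. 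You instead substitute $a=\sqrt{c}$, $b=\sqrt{\bar{c}}$ and reduce the claim algebraically to $a\log(a/b)\ge a-b$, i.e.\ to the classical inequality $\log t\ge 1-1/t$ for $t=a/b>0$, which is immediate by calculus. Both proofs ultimately rest on the same scaling by $\bar c$ (it is why the range $c,\bar c\in[0,1]$ matters only for the first inequality, not the second), but your reduction makes the underlying one-parameter inequality explicit and dispenses with the monotonicity check. Your boundary-case bookkeeping ($c=0$ with $\bar c>0$, and the degenerate $\bar c=0$) is also handled correctly.
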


\begin{proof}
Let $f(c)=c\log c$. Then
$$
  f(c)-f(\bar{c}) = f(\theta(c-\bar{c})+\bar{c})\big|_{\theta=0}^1
	= (c-\bar{c})\int_0^1 f'(\theta(c-\bar{c})+\bar{c})d\theta
$$
and 
\begin{align*}
  c\log&\frac{c}{\bar{c}} - (c-\bar{c}) = f(c)-f(\bar{c}) - f'(\bar{c})(c-\bar{c}) \\
	&= (c-\bar{c})\int_0^1\big(f'(\theta(c-\bar{c})+\bar{c}) - f'(\bar{c})\big)d\theta \\
	&= (c-\bar{c})\int_0^1 f'(s(c-\bar{c})+\bar{c})\big|_{s=0}^\theta d\theta 
	= (c-\bar{c})^2\int_0^1\int_0^\theta f''(s(c-\bar{c})+\bar{c})dsd\theta.
\end{align*}
The first inequality follows after observing that $f''(s(c-\bar{c})+\bar{c})
= 1/(s(c-\bar{c})+\bar{c})\ge 1$.

For the second inequality, we define $g(c)=(c\log c-c+1)/(\sqrt{c}-1)^2$ for
$c\neq 1$ and $g(1)=2$. Then $g$ is continuous and increasing, which implies that
$g(c)\ge g(0)=1$ and proves the statement.
\end{proof}

\begin{lemma} \label{lem.relenes}
Let $\bm{c},\bar{\bm{c}}\in \mathbb{R}_+^n$ satisfy $0\le c_i\le 1$, 
$m\le \bar{c}_i \le 1$, for $i = 1,\ldots,n$,  
and suppose that $h_i \in C([0,1]) \cap C^2((0,1])$ satisfies 
$$
  h_i''(c_i) > 0 \quad\mbox{for }0 < c_i \le 1.
$$
Then, for some $\kappa_m > 0$,
\begin{equation}\label{4.relbound}
	h_i(c_i|\bar{c}_i) = h_i(c_i) - h_i(\bar{c}_i) - h_i'(\bar{c}_i)(c_i-\bar{c}_i)  
	\ge \kappa_m (c_i-\bar{c}_i)^2.
\end{equation}
\end{lemma}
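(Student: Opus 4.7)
\smallskip\noindent\textbf{Proof plan for Lemma \ref{lem.relenes}.}
The plan is to split the range of $c_i\in[0,1]$ into a ``non-degenerate'' part, where $h_i''$ is uniformly positive, and a ``degenerate'' part, where $h_i''$ may blow up but convexity makes the relative entropy itself not too small. Throughout, let $\phi(c):=h_i(c) - h_i(\bar c_i) - h_i'(\bar c_i)(c-\bar c_i)$ so that $\phi\in C([0,1])\cap C^2((0,1])$ is strictly convex on $(0,1]$, with $\phi(\bar c_i)=\phi'(\bar c_i)=0$. In particular $\phi\ge 0$ on $[0,1]$, $\phi$ is strictly decreasing on $[0,\bar c_i]$ (extended by continuity at $0$) and strictly increasing on $[\bar c_i,1]$.

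\emph{Step 1: the case $c_i\in[m/2,1]$.} On this range, Taylor's theorem with integral remainder (valid since $h_i''\in C([m/2,1])$) gives
$$
  \phi(c_i) \;=\; \int_{\bar c_i}^{c_i}(c_i-u)h_i''(u)\,du.
$$
Setting $\mu_m:=\min_{[m/2,1]}h_i''>0$, which exists by continuity and the hypothesis $h_i''>0$, a direct computation of the integral in either orientation yields $\phi(c_i)\ge (\mu_m/2)(c_i-\bar c_i)^2$.

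\emph{Step 2: the case $c_i\in[0,m/2]$.} Here monotonicity of $\phi$ on $[0,\bar c_i]$ implies $\phi(c_i)\ge \phi(m/2)$, and Step 1 applied at $c=m/2$ gives $\phi(m/2)\ge (\mu_m/2)(m/2-\bar c_i)^2\ge \mu_m m^2/8$, using $\bar c_i-m/2\ge m/2$. Since $c_i,\bar c_i\in[0,1]$, one has $(c_i-\bar c_i)^2\le 1$, so $\phi(c_i)\ge (\mu_m m^2/8)(c_i-\bar c_i)^2$. Setting $\kappa_m:=\mu_m m^2/8$ (which is $\le \mu_m/2$ since $m\le 1$) combines both cases and proves \eqref{4.relbound}.

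\emph{Main obstacle.} The only delicate point is that $h_i''$ is not assumed uniformly positive or even bounded on $(0,1]$, and the line segment from $\bar c_i$ to $c_i$ may approach the singular point $0$. The convexity-plus-monotonicity argument in Step 2 is what circumvents this: it avoids integrating $h_i''$ near $0$ and instead reduces the small-$c_i$ estimate to a single numerical bound obtained at $c_i=m/2$, where $h_i''$ is well controlled.
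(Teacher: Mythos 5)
Your proof is correct, but it takes a genuinely different route from the paper's. The paper argues non-constructively: by a Taylor expansion it shows that the ratio $h_i(c_i|\bar{c}_i)/(c_i-\bar{c}_i)^2$ extends continuously to the diagonal $c_i=\bar{c}_i$ with value $\tfrac12 h_i''(\bar{c}_i)>0$, and then takes $\kappa_m$ to be the minimum of this continuous, everywhere-positive function over the compact set $[0,1]\times[m,1]$. Your argument instead splits into $c_i\ge m/2$ (direct Taylor-with-remainder lower bound using $\mu_m:=\min_{[m/2,1]} h_i''$) and $c_i\le m/2$ (monotonicity of $\phi$ on $[0,\bar c_i]$ reduces the estimate to the already-settled boundary point $m/2$). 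The upshot of your version is an \emph{explicit} constant $\kappa_m=\mu_m m^2/8$ and a cleaner handling of the behavior near $c_i=0$: the paper's compactness step implicitly relies on the extension-to-$c_i=0$ being positive (which needs the observation that $\phi$ is decreasing on $(0,\bar c_i]$ and hence $\phi(0)\ge\phi(m/2)>0$), and your Step 2 makes exactly this reasoning explicit. One small cosmetic fix: $\mu_m$ as you define it depends on $i$; to get a single $\kappa_m$ uniform in $i$ you should take $\mu_m:=\min_{i=1,\ldots,n}\min_{[m/2,1]}h_i''$, which is still positive by continuity and Hypothesis (H). With that tweak the argument is complete.
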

	
\begin{proof}
By Taylor expansion, the relative entropy density satisfies
$$
  \lim_{c_i\to\bar{c}_i}\frac{h_i(c_i|\bar{c}_i)}{(c_i-\bar{c}_i)^2}
	= \lim_{c_i\to\bar{c}_i}\int_0^1\int_0^\theta h_i''(s(c_i-\bar{c}_i)+\bar{c}_i)
	dsd\theta = \frac12 h_i''(\bar{c}_i) > 0.
$$
Therefore, $h_i(c_i|\bar{c}_i)/(c_i-\bar{c}_i)^2$ is a continuous function with
a positive minimum:
$$
  \kappa_m := \min_{i=1,\ldots,n}\min_{c_i\in[0,1],\,\bar{c}_i\in[m,1]}
	\frac{h_i(c_i|\bar{c}_i)}{(c_i-\bar{c}_i)^2}  > 0.
$$
This shows that $h_i(c_i|\bar{c}_i)\ge \kappa_m(c_i-\bar{c}_i)^2$ for 
$c_i \in [0,1]$, $\bar{c}_i \in [m,1]$ and proves \eqref{4.relbound}.
\end{proof}


\section{Thermodynamic derivation of the generalized Maxwell--Stefan system}
\label{app.thermo}

The aim of this section is to derive \eqref{1.gms1}--\eqref{1.gms2} from
elementary thermodynamic principles. We assume that the evolution of the gaseous
mixture is given by the conservation of mass and energy (without chemical reactions),
\begin{align}
  \pa_t(\rho c_i) + \diver(\rho c_i v+ J_i) &= 0, \nonumber \\
	\pa_t(\rho U) + \diver(\rho Uv + q) &= 0, \label{app.cons} \\
	\pa_t\rho + \diver(\rho v) &= 0, \quad i=1,\ldots,n, \nonumber
\end{align}
where $\rho_i$ is the partial density of the $i$th species, $\rho=\sum_{i=1}^n\rho_i$
the total density, $c_i=\rho_i/\rho$ the concentration of the $i$th species,
$v$ the barycentric velocity, $J_i$ the $i$th flux, $q$ the
heat flux, and the internal energy $U$ is given by the first law of thermodynamics 
in differential form by
\begin{equation}\label{app.U}
  dU = TdS - pdV + \sum_{i=1}^n \mu_i dc_i,
\end{equation}
where $S$ is the entropy, $V=1/\rho$ the volume, and $\mu_i=\pa U/\pa c_i$ 
the $i$th chemical potential. By definition, it holds that $\sum_{i=1}^n c_i=1$.
Adding the first and last equation in \eqref{app.cons}, we see that
$\diver\sum_{i=1}^n J_i=0$, which motivates us to assume that $\sum_{i=1}^nJ_i=0$.

The sum of the fluxes should vanish, $\sum_{i=1}^n J_i=0$, to be consistent
with the conservation laws.

With the material derivative $D_tf=\pa_t f+ v\cdot\na f$, the conservation laws
can be simplified to
$$
  \rho D_tc_i + \diver J_i = 0, \quad \rho D_t U + \diver q = 0, \quad
	D_t\rho + \rho\diver v = 0.
$$
Inserting these equations into equation \eqref{app.U}, formulated as
$D_t U = TD_tS - pD_tV+\sum_{i=1}^n\mu_iD_t c_i$, yields the entropy balance
\begin{align*}
  \rho D_t S &= \frac{\rho}{T}D_t U + \frac{\rho}{T}pD_t\bigg(\frac{1}{\rho}\bigg)
	- \sum_{i=1}^n\frac{\mu_i}{T}D_t c_i \\
  &= -\frac{1}{T}\diver q + \frac{p}{T}\diver v 
	+ \sum_{i=1}^n\frac{\mu_i}{T}\diver J_i
	= -\diver J_S + r_S, 
\end{align*}
where
$$
  J_S = \frac{q}{T} - \sum_{i=1}^n\frac{\mu_i}{T}J_i, \quad
	r_S = q\cdot\na\frac{1}{T} + p\diver v + \sum_{i=1}^n J_i\cdot \na\frac{\mu_i}{T}
$$
are the entropy flux and entropy production, respectively.

In our Maxwell--Stefan model, we assume that $v=0$ and $T=1$. Then the
entropy production simplifies to $r_S=\sum_{i=1}^n J_i\cdot \na\mu_i$.
It can be reformulated by taking into account that $\sum_{i=1}^n J_i=0$
and hence $J_i/\sqrt{c_i}\in L=\{\bm{x}\in\R^n:\sqrt{\bm{c}}\cdot\bm{x}=0\}$:
$$
  r_S = -\sum_{i=1}^n\frac{J_i}{\sqrt{c_i}}\cdot\sqrt{c_i}\na\mu_i
	= -\sum_{i,j=1}^n\frac{J_i}{\sqrt{c_i}}\cdot (P_L)_{ij}\sqrt{c_j}\na\mu_j
	= -\sum_{i=1}^n\frac{J_i}{\sqrt{c_i}}\cdot\sum_{j=1}^n(P_L)_{ij}\sqrt{c_j}\na\mu_j,
$$
where the projection $P_L$ on $L$ is defined in \eqref{2.PL}.
By the second law of thermodynamics, it should hold that $r_S\ge 0$.
To guarantee this property, we introduce
a positive semidefinite matrix $B(\bm{c})$ such that
\begin{equation}\label{app.B}
  \sum_{j=1}^n(P_L)_{ij}\sqrt{c_j}\na\mu_j
	= -\sum_{j=1}^n B_{ij}(\bm{c})\frac{J_j}{\sqrt{c_j}}, \quad i=1,\ldots,n.
\end{equation}
We claim that these equations correspond to the generalized Maxwell--Stefan 
equations \eqref{1.gms2}
after setting $J_i=c_iu_i$ and $K_{ij}(\bm{c})=\sqrt{c_i}B_{ij}(\bm{c})/\sqrt{c_j}$
(see \eqref{1.AK}). Indeed, the left-hand side of \eqref{app.B}, multiplied by
$\sqrt{c_i}$, becomes
$$
  \sqrt{c_i}\sum_{j=1}^n(P_L)_{ij}\sqrt{c_j}\na\mu_j
	= c_i\na\mu_i - c_i\sum_{j=1}^n c_j\na\mu_j,
$$
and the right-hand side of \eqref{app.B}, multiplied by $\sqrt{c_i}$, equals
$$
  -\sqrt{c_i}\sum_{j=1}^n B_{ij}(\bm{c})\frac{J_j}{\sqrt{c_j}}
	= -\sum_{j=1}^n K_{ij}(\bm{c})J_j = -\sum_{j=1}^n K_{ij}(\bm{c})c_ju_j.
$$
Hence, observing that $\mu_j=\pa U/\pa c_j$ corresponds to $\delta H/\delta c_j$,
\eqref{app.B} equals \eqref{1.gms2}.

\end{appendix}


\end{document}